\documentclass[10pt, reqno]{amsart}

\usepackage{preamble}
\usetikzlibrary{arrows.meta,positioning,fit,calc,shapes.multipart}

\newcommand{\lk}{\ell k}

\title[Steenrod operations and symplectic arithmetic duality]{Steenrod operations and \\ symplectic arithmetic duality}

\author{Tony Feng}

\begin{document}  

\begin{abstract}
This expository article elaborates upon my talk at the 2025 AMS Summer Institute on Algebraic Geometry. It gives an introduction to a conjecture from Tate's 1966 S\'eminaire Bourbaki report, predicting the existence of a symplectic form on Brauer groups of surfaces over finite fields, and then an informal tour of the proof in \cite{Feng20} and \cite{CF}. 
\end{abstract}

\maketitle

\begin{figure}[!h]
    \centering
    \includegraphics[scale=.45]{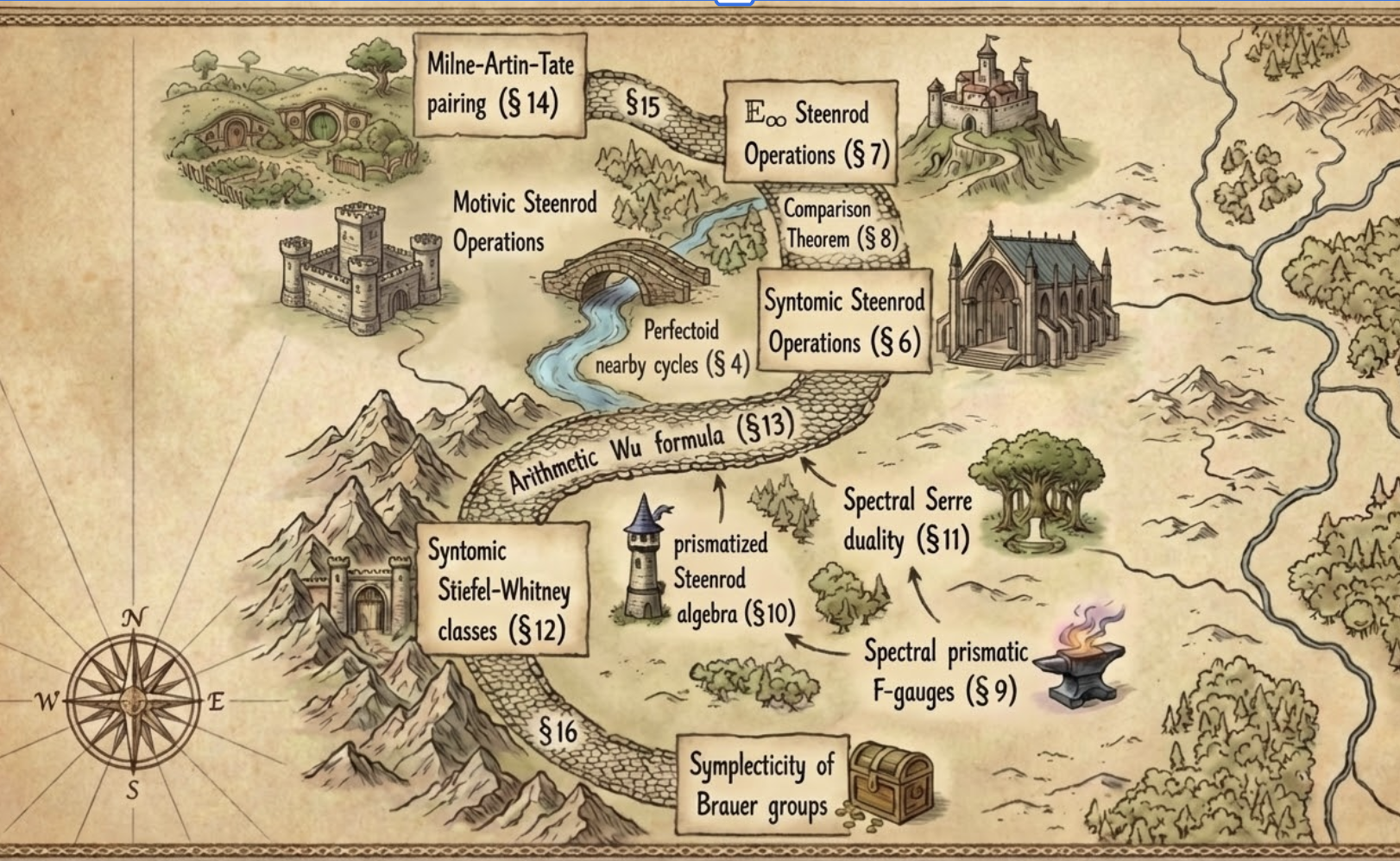}
    \caption{This roadmap (animated by Nano Banana Pro) depicts the long, winding journey to Theorem \ref{thm:symplecticityatp}.}\label{fig:roadmap}
\end{figure}

\tableofcontents

\section{Introduction}

This expository article surveys a Conjecture from Tate's 1966 Bourbaki report \cite{Tate66}, predicting the existence of a symplectic form on the Brauer group of smooth projective surfaces over finite fields. After a colorful history, with contributions from many mathematicians, the Conjecture has finally been resolved in the recent works \cite{Feng20} and \cite{CF} of the author and Shachar Carmeli. 

\subsection{Arithmetic duality} Grothendieck introduced general foundations of sheaf cohomology, and constructed in particular the \emph{\'etale cohomology} theory, which provided a cohomology theory for algebraic geometry analogous to singular cohomology of topological spaces. This allows one to assign cohomological invariants to schemes, and in particular to commutative rings (through their associated affine schemes). 

Miraculously, many rings of number-theoretic interest---such as the integers $\Z$, the rational numbers $\Q$, the $p$-adic fields $\Q_p$, and the finite fields $\F_p$---exhibit cohomological features reminiscent of those of manifolds. In particular, \emph{arithmetic duality}, announced by Tate in his 1962 ICM address \cite{Tate62}, refers to the phenomenon that the cohomology groups of global and local fields enjoy a duality theory analogous to Poincar\'e duality. 

How did Tate discover arithmetic duality? He seems to have been motivated by a result of Cassels, who had constructed a symplectic pairing on (the non-divisible quotient of) the \emph{Tate--Shafarevich group} of elliptic curves over number fields. Using arithmetic duality, Tate generalized Cassels's pairing to principally polarized abelian varieties over a global field. He proved that his pairing was always skew-symmetric, and established in \cite[Theorem 3.3]{Tate62} that it is \emph{symplectic}\footnote{We remind the reader that a pairing $\langle -, - \rangle$ is said to be
\[
\text{\emph{skew-symmetric} if } \langle u, v \rangle = -  \langle v,u \rangle \quad \text{ for all } u,v 
\] 
\[
\text{and \emph{alternating} if } \langle u, u \rangle = 0 \quad \text{ for all } u.
\]
Alternating implies skew-symmetric, but the converse can fail if the group has non-trivial 2-torsion. Finally, a pairing is \emph{symplectic} if it is both alternating and non-degenerate.} under a technical hypothesis (which is always satisfied for elliptic curves). In honor of Cassels and Tate, this pairing is now known as the \emph{Cassels--Tate pairing}.

In turn, Cassels seems to have been motivated by a numerology discovered by Selmer, that the Tate--Shafarevich group of an elliptic curve always seems to have size a perfect square \cite[\S 3]{milne2025arithmeticduality}. Because a finite abelian group with a symplectic form must have size a perfect square, this numerology could be explained by the presence of a symplectic structure. 

Thus, the relationship between symplectic structures and perfect squares has been a historical driving force behind the development of modern algebraic number theory. This survey concerns one of the founding questions in this circle of ideas, which turned out to be surprisingly difficult to resolve. 

\subsection{Tate's Symplecticity Conjecture}

A few years after the discovery of arithmetic duality, Tate's 1966 Bourbaki seminar \cite{Tate66} generalized the Birch and Swinnerton-Dyer Conjecture to abelian varieties and formulated its geometric analogue, now called the \emph{Artin--Tate Conjecture}, for a smooth, proper, geometrically connected surface $X$ over a finite field of characteristic $p$. Moreover, Tate conjectured that there should be an analogous symplectic form on the \emph{Brauer group} $\Br(X) := \rH^2_{\et}(X; \G_m)$, a torsion abelian group, conjecturally finite, which is closely connected to the Tate--Shafarevich group.

Let $\Br(X)_{\nd}$ be the non-divisible quotient of $\Br(X)$, i.e., the quotient by the subgroup of divisible elements. The (conjectural) finiteness of $\Br(X)$ would imply that $\Br(X) = \Br(X)_{\nd}$. Artin--Tate constructed a non-degenerate skew-symmetric form on $\Br(X)_{\nd}[\ell^\infty]$ for any $\ell \neq p$, and conjectured in \cite{Tate66} that it is moreover \emph{alternating}. This was named the 
\emph{Artin--Tate pairing} in \cite{Feng20}.

For $\ell = p$, Tate did not define a pairing on $\Br(X)_{\nd}[p^\infty]$, because at the time of writing \cite{Tate66} he lacked a suitable $p$-adic cohomology theory which possessed both the appropriate connection to $\Br(X)$, and a theory of arithmetic duality. Later, Milne provided this theory with his so-called \emph{logarithmic de Rham--Witt cohomology}, which is now 
also known under the synonymous names of \emph{$p$-adic 
\'etale-motivic cohomology}, and \emph{syntomic cohomology}. 
Imitating Artin--Tate's construction, Milne
constructed in \cite{Milne75} a non-degenerate skew-symmetric pairing on $
\Br(X)_{\nd}[p^{\infty}]$ when $p \neq 2$. The restriction $p 
\neq 2$ came from deficiencies in $p$-adic cohomology theory 
for $p=2$ at the time, and was removed in \cite{Milne86} 
using Illusie's development of the de Rham--Witt complex 
\cite{Ill79}. This finally made available a non-degenerate 
skew-symmetric form on all of $\Br(X)_{\nd}$ in all 
characteristics, which we call the \emph{Milne--Artin--Tate 
pairing}. 

We refer to the prediction that the (Milne--)Artin--Tate pairing is symplectic as ``Tate's Symplecticity Conjecture''. 

\begin{conj}[Tate's symplecticity conjecture]\label{conj:symplecticity} 
The Milne--Artin--Tate pairing is alternating. 
\end{conj}

In the next subsection, we discuss some of the broader context motivating Conjecture \ref{conj:symplecticity}. However, the author's own opinion is that the main interest of Conjecture \ref{conj:symplecticity} comes from its contribution to the tradition within number theory of simple empirical observations leading to rich mathematical structures. This is exemplified by Tate's discovery of arithmetic duality as an explanation for why $\# \Sha$ should be a perfect square, and the purpose of this article is to highlight further interesting structures discovered in the journey to the proof of Conjecture \ref{conj:symplecticity}.

\subsection{Broader context} 
Tate's Symplecticity Conjecture arose out of a broader investigation into the Birch and Swinnerton--Dyer Conjecture in positive characteristic, which was the main subject of Tate's Bourbaki report \cite{Tate66}. In this work, Artin--Tate formulated a geometric analogue of the BSD Conjecture, now called the \emph{Artin--Tate Conjecture}, for smooth, proper surfaces $X$ over a finite field. In particular, the Artin--Tate Conjecture predicts that $\# \Br(X)$ is the main invariant featuring in a special value formula for the leading order term of the zeta function of $X$ at the center of its functional equation.

Work of Milne \cite{Milne75, Milne86} and Kato--Trihan \cite{KT} shows that the finiteness of $\Br(X)$, for all such $X$, is equivalent to finiteness of the Tate--Shafarevich group plus the Birch and Swinnerton-Dyer Conjecture for all Jacobian varieties over function fields. If $X$ admits a fibration $f \co X \rightarrow C$ over a curve $C$, then the Artin--Tate Conjecture for $X$ corresponds to the BSD Conjecture for the Jacobian of the generic fiber of $f$, which we denote $J$, over the function field $K$ of $C$. In fact, all the objects featuring in the respective conjectures are related by a dictionary, depicted in Table \ref{tab:Artin--Tate}.

\begin{table}[htbp]
    \centering
    \caption{Analogies between the BSD and Artin--Tate Conjectures.}
    \label{tab:Artin--Tate}
    \begin{tabular}{|c|c|}
        \hline 
        \textbf{BSD Conjecture} & \textbf{Artin--Tate Conjecture} \\
        \hline
        $L(J,s)$ & $\zeta(X,s)$ \\
        \hline
        $J(K)$ & $\mathrm{NS}(X)$\\
        \hline
        $\Sha(J)$ & $\Br(X)$ \\
        \hline
        Cassels--Tate pairing & Artin--Tate pairing \\
        \hline 
    \end{tabular}
\end{table}

Tate's stated motivation for Conjecture \ref{conj:symplecticity} in \cite{Tate66} came from the analogy to $\Sha(J)$. After \cite[Theorem 5.1]{Tate66}, Tate asserts that the Cassels--Tate pairing on $\Sha(J)$ is alternating in general, forgetting the  technical hypothesis from \cite[Theorem 3.3]{Tate62} under which he had proved this alternation. This led to a long misconception that was finally resolved in work of Poonen--Stoll \cite{PS99}, who showed that the Cassels--Tate pairing could fail to be alternating without this assumption.

\subsection{History} 
We survey some prior results on Conjecture \ref{conj:symplecticity}. 

Recall that $\Br(X)_{\nd}$ is finite, so a symplectic form on $\Br(X)_{\nd}$ would imply that $\# \Br(X)_{\nd}$ is a perfect square. Manin \cite{Manin67, Manin86} claimed to find $X$ such that $\# \Br(X) = 2$, which would disprove Conjecture \ref{conj:symplecticity}. However, Urabe later found errors in Manin's computations, and then went on to prove in \cite{Urabe96} that if $p$ (the characteristic of the base field) is odd, then $\# \Br(X)_{\nd}$ is always a perfect square. 

For all $p$, including $p=2$, Liu--Lorenzini--Raynaud proved in \cite{LLR05} that \emph{if} $\Br(X)$ is finite, then its size is a perfect square. The finiteness assumption is very strong: as mentioned above, it is equivalent to the BSD Conjecture for Jacobians over function fields. (The statement that $\#\Br(X)_{\nd}$ is a perfect square when $p=2$ was not known unconditionally until the work \cite{CF} of the author with Carmeli.) Geisser \cite{Gei20} pointed out an error in \cite{LLR05}, stemming from an erroneous Lemma of Gordon \cite{Go79}; this was subsequently fixed in \cite{LLR18}.

Next we turn from the size of the Brauer group to the symplecticity of the (Milne--)Artin--Tate pairing. Zarhin showed in \cite{Zar} that there is an alternating pairing\footnote{Strictly speaking, the result is for a pairing discovered independently in \cite{Zar}, which should be the same as Artin--Tate's, but no comparison with the Artin--Tate pairing is made in \cite{Zar}.} on $\Br(X)_{\nd}$ under the assumptions: 
\begin{itemize}
\item $p \neq 2$, $X$ lifts to characteristic $0$, and the geometric N\'{e}ron-Severi group of $X$ has vanishing 2-primary part.
\end{itemize}
Finally, we arrive at the author's own chapter in the story. The author's Ph.D. thesis \cite{Feng20} established Conjecture \ref{conj:symplecticity} when $p \neq 2$. 

\begin{thm}\label{thm:symplecticityneqp}
Assume $p \neq 2$. Then Conjecture \ref{conj:symplecticity} holds true. 
\end{thm}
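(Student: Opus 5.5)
Since the pairing is skew-symmetric and $\Br(X)_{\nd}$ is finite, alternation fails only on $2$-primary parts. We need $\langle x,x\rangle=0$ for $x\in\Br(X)_{\nd}[2^\infty]$. For $\ell\neq 2$ (including $\ell=p$ odd), skew-symmetry gives $2\langle x,x\rangle=0$ in an $\ell$-primary group, so $\langle x,x\rangle=0$. Since $p\neq 2$, we therefore work only with the Artin--Tate pairing at $\ell=2$, built from ordinary $\ell$-adic étale cohomology with coefficients $\mu_{2^n}$. Here the classical tools of étale cohomology with $\Z/2$-coefficients are available, in particular Steenrod squares.

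\textbf{Reducing to a cup-product statement.} First, unwind Artin--Tate's construction. For $x\in\Br(X)_{\nd}[2^n]$, lift $x$ to a class $\tilde x\in \rH^2(X;\mu_{2^n})$. The pairing $\langle x,x\rangle$ is then computed as the image of $\tilde x\cup\beta(\tilde x)$, or a variant involving a Bockstein $\beta$, under the trace $\rH^5(X;\mu_{2^n}^{\otimes 2})\to \Z/2^n$. Here $X$ is viewed as having cohomological dimension $5$: dimension $4$ from the geometric surface, plus $1$ from the finite field. Using the Hochschild--Serre spectral sequence relative to $\Gal(\overline{\F}_q/\F_q)$, rewrite this quantity as a cup product of the form $\tilde x\cup \mathrm{Frob}$-twisted data on $\overline X$. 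A standard reduction, comparing levels $2^n$ and $2$ and using the Bockstein, should bring everything down to the universal question of whether a self-cup product of a degree-$2$ or degree-$3$ class with $\F_2$-coefficients vanishes in top degree.

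\textbf{Steenrod squares.} For a class $y$ in degree $k$ with $\F_2$-coefficients, we have $y\cup y=\mathrm{Sq}^k(y)$. On a space of cohomological dimension $2k$ satisfying Poincaré duality, $\mathrm{Sq}^k$ into the top degree is given by cup product with the Wu class $v_k$. The strategy is therefore:
\begin{enumerate}
\item Develop Steenrod operations on étale cohomology of $X$ over $\F_q$, with an arithmetic Wu formula valid in the $5$-dimensional arithmetic duality setting.
\item Identify the arithmetic Wu class relevant to degree $5$ as a product of a geometric Wu class $v_2(\overline X)=w_2$ with a class on $\Spec\F_q$.
\item Show that its contribution vanishes.
\end{enumerate}
For step 3, one natural source of vanishing is that $w_2(\overline X)$ is the reduction of $c_1(K_X)$, hence the reduction of the class of a line bundle. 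Such a class pairs trivially with Brauer classes, because they are orthogonal to $\mathrm{NS}$ under the pairing. Alternatively, the Galois part of the Wu class may vanish because $\rH^*(\F_q;\F_2)$ satisfies $\mathrm{Sq}^1=0$ on $\rH^1$ (since $\rH^2(\F_q)=0$).

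\textbf{Main obstacle.} I expect the hardest step to be the middle one: rigorously identifying the Artin--Tate pairing (with its Bockstein) with a Steenrod square, then proving an étale Wu formula over a finite field. The Wu formula requires compatibility of the Steenrod operations with the trace map and with Hochschild--Serre. My first attempt would be to construct $\mathrm{Sq}$ via the symmetric monoidal structure and $\Z/2$-equivariant cup products on étale cochains, exactly as in topology. I would then verify the Wu formula by reducing to the geometric statement on $\overline X$, where it follows from the smooth-proper case of étale Wu theory (via Chern classes and the Riemann--Roch/Grothendieck formalism for $w_2$).
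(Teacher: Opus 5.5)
\textbf{There is a genuine gap in the bridge from the pairing to Steenrod squares.} Your reduction to the $2$-primary part is fine, and so is passing to $\langle u,u\rangle_n=\int_X u\smile\beta_n u$ for $u\in\rH^2(X;\Z/2^n(1))$. The problem comes next. Arithmetic duality for a surface over $\F_q$ has odd dimension $5$, and squaring only lands in even degrees. So $u\smile\beta_n u$ is never a self-cup product $y\smile y=\Sq^k(y)$ on $X$. The picture ``$\Sq^k$ into top degree $2k$ is cup with $v_k$'' is the even-dimensional intersection-form mechanism, whereas you are facing a linking form. Passing to $\overline X$ via Hochschild--Serre does not help: it gives a Frobenius-twisted form rather than $a\smile a$, and you give no way to remove the twist.

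\textbf{The missing identity.} What you need is
$u\smile\beta_n(u)=[2^{n-1}]\,\Sq^{2}(\overline{\beta_n u})$, valid at each level $2^n$. This matters because alternation on elements of order $2^n$ does not follow from the order-$2$ case. The relevant operation is $\Sq^2\colon\rH^3\to\rH^5$, one step below squaring. For $n=1$ it comes from a secondary Bockstein computation. If $d\tilde u=2\tilde v$, then $\tilde u^2-2\,\mathrm{cup}_1(\tilde u\otimes\tilde v)$ has coboundary $4(\tilde u\smile\tilde v+\mathrm{cup}_1(\tilde v\otimes\tilde v))$. This exhibits $u\smile\beta u+\Sq^2(\beta u)$ as a second-order Bockstein of $u^2$, which vanishes because $\rH^5(X;\Z_2(2))\cong\Z_2$ is torsion-free. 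By duality, $\Sq^2$ on $\rH^3(X;\Z/2)$ is cup product with a unique $v_2\in\rH^2(X;\Z/2(1))$. The derivation property then gives $v_2\smile\beta u=\beta(v_2\smile u)-\beta(v_2)\smile u=-\beta(v_2)\smile u$. Hence alternation is equivalent to $v_2$ lifting to $\rH^2(X;\Z_2(1))$. So the class that matters is this degree-$2$ class on $X$ itself. A product of $w_2(\overline X)$ with a positive-degree class from $\Spec\F_q$, as in your step 2, has the wrong degree. The vanishing mechanism is $\beta(v_2)=0$, which is the correct form of your ``orthogonal to NS'' remark.

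\textbf{The second gap is the arithmetic Wu formula $\Sq(v)=w$ on $X$.} You propose to deduce it from the geometric formula on $\overline X$, but that is exactly the non-formal step. The topological proof needs a K\"unneth decomposition of the diagonal class, and there is no K\"unneth formula over $\F_q$. Indeed, $\rH^*(X\times_{\F_q}X)$ satisfies $9$-dimensional duality while $\rH^*(X)\otimes\rH^*(X)$ has $10$-dimensional duality. What must be proved is that the wrong-way map $\varphi_*\colon\rH^*(X\times_{\F_q}X)\to\rH^*(X)\otimes\rH^*(X)$ commutes with the Steenrod squares. Here $\varphi_*$ is the dual of the external cup product under Poincar\'e duality. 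The paper proves this by passing to \'etale homotopy types, which detaches $X$ from its structure map to $\Spec\F_q$. One then needs arithmetic Stiefel--Whitney classes on $X$, defined by Thom's formula $\Sq^i([X])=\pi^*(w_i)\smile[X]$ using the same squares; their $\Sq^1$ brings in the Kummer class of $-1$. A line-bundle computation plus the splitting principle shows that $w_{2i}(TX)$ lifts to $\rH^{2i}(X;\Z_2(i))$, and via $\Sq(v)=w$ this yields the integral lift of $v_2$. Your observation that $w_2$ is the reduction of $c_1(K_X)$ is the right intuition for this last step. However, it has to be carried out on $X$, where the Wu class lives, not on $\overline X$.
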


The most difficult case of Conjecture \ref{conj:symplecticity} is when $p=2$, because the Conjecture is fundamentally about cohomology at coefficient characteristic $\ell = 2$. When $p=\ell$ the cohomology theory becomes much subtler, as is well documented; the particular technical issues pertinent to this problem will be discussed in \S \ref{sec:ellequalsp}. In \cite{CF}, the author with Shachar Carmeli completed this last case: 

\begin{thm}\label{thm:symplecticityatp}
Assume $p = 2$. Then Conjecture \ref{conj:symplecticity} holds true.  
\end{thm}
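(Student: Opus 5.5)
The strategy follows the approach of \cite{Feng20}, adapted to the $p$-adic setting. First, reduce the alternation of the Milne--Artin--Tate pairing on $\Br(X)_{\nd}$ to a statement about the $2$-primary part. For $\ell$ odd, skew-symmetry already forces alternation, so only the $2^\infty$-part matters. Since $p=2$, the $\ell=2\neq p$ part is vacuous, and everything concerns $\Br(X)_{\nd}[2^\infty]$ with Milne's pairing.

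The next step is to recall the construction. An element $x\in \Br(X)_{\nd}[2^n]$ lifts along a Bockstein to a class $\tilde x$ in $\rH^2(X;\Z/2^n(1))$, taken in the flat or syntomic (logarithmic de Rham--Witt) sense. The pairing is then $\langle x,x\rangle = \int \tilde x \cup \beta(\tilde x)$, where $\beta$ is a Bockstein and $\int$ is the trace map $\rH^5(X;\Z/2^n(2))\to \Q/\Z$ from Milne's duality. Via a standard dévissage, the obstruction to alternation should reduce to a mod-$2$ statement: the functional $x\mapsto \int x\cup \beta x$ on $\rH^2(X;\Z/2(1))$.

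Following \cite{Feng20}, I would then identify this self-pairing with a Steenrod operation. In topology, $x\cup\beta x = \beta(x\cup x)$ up to terms involving $\mathrm{Sq}$, and $x\cup x=\mathrm{Sq}^2x$. One would therefore want a Wu-type formula. The integral of $\mathrm{Sq}^{2}(y)$ against the fundamental class, for $y$ in the middle-plus-one degree, equals the cup product of $y$ with a characteristic (Wu) class $v$. The remaining task is to show that this Wu class, after accounting for the Frobenius (Galois) direction coming from the finite base field, is killed by the Bockstein or lies in the image of something pairing trivially with $\Br$. For $X$ a surface over $\F_q$, the "arithmetic dimension" is $5$. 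The relevant Wu class should come from geometric Stiefel--Whitney data of the surface, which is algebraic (Chern classes reduced mod $2$) and hence pairs trivially with the transcendental part $\Br$.

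The main obstacle is the foundational step: constructing Steenrod operations on syntomic/flat cohomology with $\Z/2(*)$ coefficients in characteristic $2$, with enough functoriality to prove the Cartan formula, the Adem-type relation $\beta\mathrm{Sq}^{2i}=\mathrm{Sq}^{2i+1}$, and above all a Wu formula compatible with Milne's trace. These coefficients are not étale-locally constant, so the classical construction via $\Z/2$-equivariant (Tate) powers of cochains does not apply directly. My first attempt would be to work with the syntomic complexes $\Z_2(i)(X)$ as $\mathbb{E}_\infty$-algebra objects, for instance via prismatic or motivic filtrations where they are graded pieces of an $\mathbb{E}_\infty$-ring such as $\mathrm{TC}$. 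I would then define the operations from the $\mathbb{E}_\infty$ structure through the Tate diagonal or the Frobenius. Next, I would prove a Wu formula by identifying the duality (trace) map as an $\mathbb{E}_\infty$-compatible orientation and computing the Wu class through a "Frobenius is an $\mathbb{E}_\infty$ map" argument, comparing it to the Frobenius of the base field. Finally, I would check that the resulting Wu class for a surface is the reduction of an algebraic (Chern) class. Its pairing against Bockstein lifts of Brauer classes then vanishes by the exact sequence relating $\mathrm{Pic}$, $\rH^2(\Z/2^n(1))$, and $\Br[2^n]$, which gives alternation.
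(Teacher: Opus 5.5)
Your skeleton matches the paper's universal layers: pass to the Bockstein pairing via Proposition~\ref{prop: compatibility}, express the self-pairing through a square as in Theorem~\ref{thm: MAT form}, represent that square by a Wu class, and show the Wu class lifts integrally using an arithmetic Wu formula and Chern classes.

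\textbf{The main gap: the $p=2$ layer.} You propose to use only the operations coming from the $\EE_\infty$-structure on syntomic cochains, and to prove a Wu formula and a Chern-class identification for those. This is exactly what breaks. On $\bigoplus_b \rH^*(X;\Z/2(b))$ the operations $\Sqe^{i}$ double the weight, which has three consequences:
\begin{enumerate}
\item $\Sqe^0$ is not the identity.
\item Stiefel--Whitney classes defined by Thom's recipe with $\Sqe$ lie in $\rH^i(X;\Z/2(\mathrm{rank}\,E))$. The twist depends on the rank rather than the degree, so these classes cannot be compared with reductions of Chern classes.
\item A formula $w^{\EE}=\Sqe(v)$ cannot be inverted to solve for $v$, since $\Sqe$ is nilpotent on a finite-type $X$.
\end{enumerate}
The missing idea is a \emph{second} family of syntomic Steenrod operations of motivic type, $\Sqs^{2i}\colon \rH^a(X;\Z/2(b))\to \rH^{a+2i}(X;\Z/2(b+i))$. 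In \cite{CF} these are built by transporting Voevodsky's characteristic-zero motivic Steenrod algebra through perfectoid nearby cycles over $\Z_p^{\mathrm{cyc}}$. For them, the Stiefel--Whitney classes are mod $2$ Chern classes. Two further inputs are then needed that your sketch does not supply.
\begin{enumerate}
\item The arithmetic Wu formula for $\Sqs$. Its crux is the Steenrod-equivariance of $\varphi_*\colon\rH^*(X\times_k X)\to\rH^*(X)\otimes\rH^*(X)$: there is no K\"unneth formula over $k$, and the \'etale homotopy trick from the case $\ell\neq p$ is unavailable. \cite{CF} proves it via spectral prismatic $F$-gauges and a spectral Serre duality with a Brown--Comenetz dualizing object. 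Your ``$\EE_\infty$-compatible orientation'' and ``Frobenius is an $\EE_\infty$ map'' do not address this.
\item The Comparison Theorem~\ref{thm: intro compare operations}, giving $\Sqe^2=\Sqs^2$ on $\rH^3(X;\Z/2(1))\to\rH^5(X;\Z/2(2))$. This is indispensable because Theorem~\ref{thm: MAT form} is intrinsically about the $\EE_\infty$ square (it comes from cup-$i$ products), while only the $\Sqs$-Wu class is computable.
\end{enumerate}

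\textbf{A second, repairable problem: the reduction to mod $2$.} The functional $x\mapsto\int x\smile\beta x$ on $\rH^2(X;\Z/2(1))$ does not control alternation on all of $\Br(X)_{\nd}[2^\infty]$. Write $A=\rH^3(X;\Z_2(1))_{\mathrm{tors}}$. Skew-symmetry makes $q(a)=\langle a,a\rangle$ a homomorphism $A\to\tfrac12\Z/\Z$, and the mod $2$ functional only computes $q$ on $A[2]$. Since $q(2a)=0$, this restriction is blind to the values of $q$ on generators of $\Z/2^n$-summands with $n\geq 2$. You need Theorem~\ref{thm: MAT form} for all $n$, namely $u\smile\beta_n u=[2^{n-1}]\Sq^{2}(\overline{\beta_n u})$. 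The resulting criterion is that the Wu class lifts to $\rH^2(X;\Z_2(1))$, not merely to $\Z/4$.

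Relatedly, your heuristic for identifying $x\smile\beta x$ with a square is off, since $\beta(x\smile x)=0$ by the derivation property. The actual identity comes from the secondary Bockstein of $x^2$, together with torsion-freeness of $\rH^5(X;\Z_2(2))$.

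Since your endgame already aims at an integral Chern-class lift, this second problem is fixable. The absence of the second Steenrod algebra, its Wu formula, and the comparison theorem is not.
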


\begin{remark}[Higher dimensional generalizations] The works \cite{Feng20} and \cite{CF} construct a generalization of the (Milne--)Artin--Tate pairing for any smooth, proper, geometrically connected variety of even dimension over $\F_p$, and prove that it is symplectic. The case of surfaces, while of historical interest due to the connection to the BSD Conjecture, is not special from this point of view. However, we concentrate on the case of surfaces in this survey, for simplicity. 
\end{remark}

The rest of the paper will survey the ideas going into the proofs of Theorem \ref{thm:symplecticityneqp} and Theorem \ref{thm:symplecticityatp}. We give a brief hint now. The key to Theorem \ref{thm:symplecticityneqp} is artful use of the \emph{\'etale Steenrod operations}, which are subtle symmetries of mod $2$ \'etale cohomology. For $p=2$, a historical difficulty in even \emph{constructing} the pairing at $p=2$ was the lack of an appropriate cohomology theory. The cohomology theory now exists and is called (among other names) syntomic cohomology, but its \emph{symmetries} were not satisfactorily understood. For Theorem \ref{thm:symplecticityatp}, we needed to first build a theory of \emph{syntomic Steenrod operations} acting on syntomic cohomology. This is accomplished following Lurie's vision for ``prismatic stable homotopy''. We emphasize that we develop the theory of syntomic Steenrod operations for all primes $p$, even though the application to Theorem \ref{thm:symplecticityatp} requires only the case $p=2$.

\subsection{Acknowledgments} We thank Shachar Carmeli, Soren Galatius, and Jacob Lurie for their help through the years of working on \cite{Feng20} and \cite{CF}. The author was supported by the NSF (grants DMS-2302520 and DMS-2441922), the Simons Foundation, and the Alfred P. Sloan Foundation.

\section{Cohomology and the (Milne--)Artin--Tate pairing}

\subsection{The dictionary}\label{ssec:cohog-dictionary}

In this article, we will fluidly move between cohomology theories in three different mathematical settings, which have certain common formal properties: 
\begin{itemize}
\item (``topology'') singular cohomology of topological spaces,
\item (``$\ell$-adic'') $\ell$-adic \'etale cohomology of varieties over a field where $\ell$ is invertible. 
\item (``$p$-adic'') syntomic cohomology of varieties over $\F_p$. 
\end{itemize}
Although the definitions and properties of these cohomology theories are established in very different ways in the different situations, the upshot is a formalism that looks uniform. In order to emphasize the parallels, we introduce a uniform (somewhat non-standard) notation for cohomology in the different settings. 
\begin{itemize}
\item (``$\ell$-adic'') If $X$ is a variety over $k$ and $\ell \in k^\times$, then we write $\rH^{a}(X; \Z/\ell^n(b)) := \rH^a_{\et}(X; \mu_{\ell^n}^{\otimes b})$ for the \'etale cohomology of the ``Tate twist'' $\mu_{\ell^n}^{\otimes b}$ viewed as an \'etale sheaf on $X$. We write 
\[
\rH^{a}(X; \Z_\ell(b))  := \limit_n \rH^a_{\et}(X; \Z/\ell^n(b)).
\]
\item (``$p$-adic'') If $X$ is a scheme over $\Z_p$, then we write $\rH^a(X; \Z/p^n(b)) := \rH^{a}_{\syn}(X; \Z/p^n(b))$ for the syntomic cohomology of $X$ with coefficients in the syntomic complex $\Z/p^n(b)$. For smooth schemes $X$ over $\F_p$, this was originally constructed by Milne \cite{Milne75, Milne86} under the name ``logarithmic de Rham--Witt cohomology'', and later reinterpreted by Geisser--Levine \cite{GL00} in terms of mod $p$ motivic cohomology. Recently, a more flexible approach, applicable for general $\Z_p$-schemes, was developed in work of Bhatt--Morrow--Scholze \cite{BMS2} and Bhatt--Lurie \cite{BL22a}; this additional flexibility is used crucially in \cite{CF}. See \cite[\S 2]{CF} for a more detailed discussion and references. 
\item (``topology'') If $X$ is a topological space, then we write $\rH^a(X; \Z/\ell^n(b)) := \rH^a(X; \Z/\ell^n)$. In other words, the twist ``$(b)$'' is a meaningless notational device.
\end{itemize}

\begin{remark}
The uniform notation for these different cohomology theories elides the difficulties behind their construction. Generally speaking, the technological level of the results discussed in this article is 
\begin{itemize}
\item classical in the setting of topology (say, accessible pre-1970), 
\item fairly classical in the setting of $\ell$-adic cohomology (say, accessible pre-1990 with perhaps one technical exception), 
\item modern in the setting of $p$-adic cohomology (\emph{not} accessible pre-2024). 
\end{itemize}
\end{remark}

In order to convey ideas without getting bogged down in technicalities, we will employ the analogies between these settings as an explanatory device. For example, we may ``intuitively justify'' a statement about $\ell$-adic \'etale cohomology by proving an analogous assertion in topology, when we feel that the analogy aptly reflects the moral content. In turn, we will similarly use $\ell$-adic \'etale cohomology as a toy metaphor for syntomic cohomology.

\subsection{Poincar\'e duality}
To illustrate how the dictionary works, we spell out how Poincar\'e duality works in the different settings. 

\subsubsection{Topological incarnation}\label{sssec:top-cohomology}If $M$ is a closed, oriented, connected manifold of dimension $d$, then Poincar\'e duality gives an isomorphism 
\[
\int_M  \co \rH^d(M; \Z/\ell^n) \xrightarrow{\sim} \Z/\ell^n
\]
and a perfect pairing 
\[
\rH^i(M; \Z/\ell^n) \times \rH^{d-i}(M; \Z/\ell^n) \rightarrow \rH^d(M; \Z/\ell^n) \xrightarrow{\int_M} \Z/\ell^n.
\]

\subsubsection{Geometric $\ell$-adic cohomology}\label{sssec:geometric-cohomology}
Next suppose that $X$ is a smooth, projective, connected variety over a \emph{separably closed} field $k$. Let $\ell$ be prime and assume that $k$ does \emph{not} have characteristic $\ell$. Then Poincar\'e duality gives an isomorphism 
\begin{equation}\label{eq:ell-adic-trace-geometric}
\int_X \co \rH^{2d}(X; \Z/\ell^n (d) ) \xrightarrow{\sim} \Z/\ell^n
\end{equation}
and a perfect pairing
\[
\rH^i(X; \Z/\ell^n(b)) \times \rH^{2d-i}(X; \Z/\ell^n(d-b)) \rightarrow \rH^{2d}(X; \Z/\ell^n(d)) \xrightarrow{\int_X} \Z/\ell^n.
\]
This should be thought of as analogous to \S \ref{sssec:top-cohomology}. As a sanity check, when $k = \CC$, the Artin Comparison Theorem identifies the $\ell$-adic \'etale cohomology $\rH^a(X; \Z/\ell^n(b))$ with the singular cohomology $\rH^a(X(\CC); \Z/\ell^n(b))$, compatibly with these forms of Poincar\'e duality. The dimensions match because $X(\CC)$ has dimension $2d$ as a real manifold.

\subsubsection{Arithmetic $\ell$-adic cohomology}\label{sssec:arithmetic-cohomology}
For this paper, we will actually be interested in a different situation, with base field $k$ being a finite field $k = \F_q$ of characteristic $p$. In that case, Poincar\'e duality has a different appearance than in \S \ref{sssec:geometric-cohomology}.

Indeed, we can see by inspection that even the \emph{point} $\Spec \F_q$ has non-trivial cohomology: since $\Gal(\ol \F_q/ \F_q) = \wh \Z$, we have 
\[
\rH^1(\Spec \F_q; \Z/\ell ) = \Hom_{\text{cts}}(\wh \Z, \Z/\ell) = \Z/\ell.
\]
In fact, both the homotopy groups and cohomology groups of $\Spec \F_q$ resemble those of a circle, which is part of the celebrated ``primes and knots'' analogy \cite{Mor24}. It turns out that for general $X$, all the behavioral difference from the situation of \S \ref{sssec:geometric-cohomology} is accounted for by this different behavior of the base ``point''. 

Let $X$ be a smooth, projective, geometrically connected variety over a finite field $\F_q$ of characteristic $p$, with dimension $d$. Then Poincar\'e duality gives an isomorphism 
\begin{equation}\label{eq:ell-adic-trace-absolute}
\int_X \co \rH^{2d+1}(X; \Z/\ell^n (d) ) \xrightarrow{\sim} \Z/\ell^n
\end{equation}
and a perfect pairing
\[
\rH^i(X; \Z/\ell^n(b)) \times \rH^{2d+1-i}(X; \Z/\ell^n(d-b)) \rightarrow \rH^{2d+1}(X; \Z/\ell^n(d)) \xrightarrow{\int_X} \Z/\ell^n.
\]
The ``extra'' $+1$ dimension in \eqref{eq:ell-adic-trace-absolute} comes from the fact that we are really considering the ``absolute'' cohomology $\rH^a(X; \Z/\ell^n(b))$ instead of the ``geometric'' cohomology $\rH^a(X_{\ol \F_q}; \Z/\ell^n(b))$. The two are related by the Hochschild--Serre spectral sequence
\[
\rH^i(\Spec \F_q, \rH^j(X_{\ol k})) \implies \rH^{i+j}(X),
\]
using which one can deduce the ``absolute'' version of Poincar\'e duality from the geometric one.

\subsection{The Brauer group}
Let $X$ be a smooth, projective variety over a finite field $\F_q$ of characteristic $p$. The \emph{Brauer group} of $X$ is $\Br(X):= \rH^2_{\et}(X; \G_m)$. 

For an abelian group $G$, we write $G_{\nd}$ for the maximal non-divisible quotient of $G$, which is the quotient of $G$ by its largest divisible subgroup. 

\begin{lemma}
Let $\ell$ be any prime (possibly equal to $p$). We have an isomorphism 
\[
\Br(X)_{\nd}[\ell^{\infty}] \cong \rH^2(X; \Q_{\ell}/\Z_{\ell}(1))_{\nd}.
\]
\end{lemma}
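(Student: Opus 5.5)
The plan is to use the Kummer sequence for $\ell \neq p$, and its syntomic substitute for $\ell = p$, to produce a short exact sequence
\[
0 \rightarrow \mathrm{Pic}(X) \otimes \Q_\ell/\Z_\ell \rightarrow \rH^2(X; \Q_\ell/\Z_\ell(1)) \rightarrow \Br(X)[\ell^\infty] \rightarrow 0,
\]
and then pass to non-divisible quotients. For $\ell \neq p$ we have the exact sequence of \'etale sheaves $0 \to \mu_{\ell^n} \to \G_m \xrightarrow{\ell^n} \G_m \to 0$. Its long exact sequence yields
\[
0 \to \mathrm{Pic}(X)/\ell^n \to \rH^2(X; \Z/\ell^n(1)) \to \Br(X)[\ell^n] \to 0.
\]
Taking the colimit over $n$ along the transition maps induced by $\mu_{\ell^n} \hookrightarrow \mu_{\ell^{n+1}}$ gives the displayed sequence, since filtered colimits are exact. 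For $\ell = p$, the same sequence holds with $\Z/p^n(1)$ interpreted syntomically. One route is the identification $\Z/p^n(1) \simeq R\Gamma_{\mathrm{fppf}}(-, \mu_{p^n})[{-1}]$ shifted appropriately, i.e.\ $\Z/p^n(1)$ is the fppf cohomology of $\mu_{p^n}$, as recorded in \cite{BL22a} and \cite[\S 2]{CF}. The other route, classically, is Milne's description via $\mathrm{dlog}$ and $W_n\Omega^1_{\log}$. Then one uses the fppf Kummer sequence, together with Grothendieck's theorem that $\rH^2_{\mathrm{fppf}}(X;\G_m) = \rH^2_{\et}(X;\G_m)$ for the smooth group $\G_m$.

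Next comes the formal step. The group $\mathrm{Pic}(X) \otimes \Q_\ell/\Z_\ell$ is divisible, so it is a direct summand of $A := \rH^2(X;\Q_\ell/\Z_\ell(1))$, and $A \cong (\mathrm{Pic}(X)\otimes\Q_\ell/\Z_\ell) \oplus \Br(X)[\ell^\infty]$. The maximal divisible subgroup of a direct sum is the direct sum of the maximal divisible subgroups. Hence $A_{\nd} \cong (\Br(X)[\ell^\infty])_{\nd}$.

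Finally, we need $(\Br(X)[\ell^\infty])_{\nd} = \Br(X)_{\nd}[\ell^\infty]$. The group $\Br(X)$ is torsion, so it splits as $\bigoplus_\ell \Br(X)[\ell^\infty]$. Its maximal divisible subgroup is therefore the sum of the maximal divisible subgroups of the primary parts. The nd-quotient then decomposes primary-wise, and its $\ell$-primary part is the nd-quotient of $\Br(X)[\ell^\infty]$.

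I expect the main obstacle to be the case $\ell = p$. There one must justify that the syntomic $\Z/p^n(1)$ agrees with $\mu_{p^n}$ cohomology in the flat topology, and that the Kummer sequence is exact there. The rest is routine abelian group theory; for the statement as written I would simply cite the comparison from \cite{BL22a} or Milne's original treatment.
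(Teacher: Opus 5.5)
Your proposal is correct and is essentially the paper's argument: the paper only invokes a formal diagram chase in the long exact sequence of the Kummer sequence, citing \cite[\S 14.1]{CF}. You write that chase out, producing the surjection $\rH^2(X;\Q_\ell/\Z_\ell(1)) \to \Br(X)[\ell^\infty]$ with divisible kernel $\mathrm{Pic}(X)\otimes \Q_\ell/\Z_\ell$, then pass to non-divisible quotients one primary part at a time. One small slip: for $\ell = p$ there is no shift, i.e.\ $R\Gamma(X;\Z/p^n(1)) \simeq R\Gamma_{\mathrm{fppf}}(X;\mu_{p^n})$, which is what your ``i.e.'' clause correctly says.
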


\begin{proof}
This follows from a formal diagram chase in the cohomology long exact sequence induced by the Kummer sequence of sheaves: see \cite[\S 14.1]{CF}.
\end{proof}

From the short exact sequence
\begin{equation}\label{eq: SES}
0 \rightarrow \Z_{\ell}(d) \rightarrow \Q_{\ell}(d) \rightarrow \Q_{\ell}(1)/\Z_{\ell}(d) \rightarrow 0
\end{equation}
we obtain a boundary map 
\[
\wt{\delta} \co \rH^2(X; \Q_{\ell}/\Z_{\ell}(d)) \rightarrow \rH^3(X; \Z_{\ell}(d)).
\]

\begin{lemma}\label{lem:tors-coh}
Let $\ell$ be any prime (possibly equal to $p$). The map $\wt{\delta}$ induces an isomorphism
\[
\wt{\delta} \co \rH^2(X; \Q_{\ell}/\Z_{\ell}(d))_{\nd} \xrightarrow{\sim} \rH^3(X; \Z_{\ell}(d))_{\tors}.
\]
\end{lemma}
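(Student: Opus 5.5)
The plan is to run the long exact sequence in cohomology attached to \eqref{eq: SES} and then identify the relevant kernel and image. (The twists in \eqref{eq: SES} should all read $(d)$; I take the quotient term to be $\Q_\ell/\Z_\ell(d)$.) The sequence reads
\[
\rH^2(X; \Z_\ell(d)) \to \rH^2(X; \Q_\ell(d)) \to \rH^2(X; \Q_\ell/\Z_\ell(d)) \xrightarrow{\wt\delta} \rH^3(X; \Z_\ell(d)) \to \rH^3(X; \Q_\ell(d)).
\]
I will use $\rH^a(X;\Q_\ell(d)) = \rH^a(X;\Z_\ell(d))\otimes_{\Z_\ell}\Q_\ell$. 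For $\ell\neq p$ this is the usual definition of rational $\ell$-adic cohomology. For $\ell = p$ it follows because syntomic cohomology of $X$ is computed by a perfect $\Z_p$-complex, so that $\rH^a(X;\Z_p(d))$ is a finitely generated $\Z_p$-module. Granting this, the kernel of $\rH^3(X;\Z_\ell(d))\to \rH^3(X;\Q_\ell(d))$ is exactly the torsion subgroup. Exactness then shows that $\wt\delta$ surjects onto $\rH^3(X;\Z_\ell(d))_{\tors}$.

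Next I identify the kernel of $\wt\delta$, which is the image $D$ of $\rH^2(X;\Q_\ell(d))$. This image is a quotient of a $\Q_\ell$-vector space, so it is divisible. For the statement I also need $D$ to be the maximal divisible subgroup. The cokernel of $D \hookrightarrow \rH^2(X;\Q_\ell/\Z_\ell(d))$ is the torsion of $\rH^3(X;\Z_\ell(d))$. Since that module is finitely generated over $\Z_\ell$, its torsion is a finite group, and a finite group contains no nonzero divisible subgroup. Now let $E$ be the maximal divisible subgroup of $\rH^2(X;\Q_\ell/\Z_\ell(d))$. Its image in the finite quotient is divisible, hence zero, so $E\subseteq D$. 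Together with the divisibility of $D$ this gives $E = D$. Therefore $\wt\delta$ induces an isomorphism $\rH^2(X;\Q_\ell/\Z_\ell(d))_{\nd} = \rH^2/D \xrightarrow{\sim} \rH^3(X;\Z_\ell(d))_{\tors}$.

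The main obstacle I expect is justifying finite generation of $\rH^3(X;\Z_\ell(d))$ over $\Z_\ell$, together with the compatibility of the long exact sequence with the limit definitions.

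For $\ell\neq p$ I would argue as follows. The groups $\rH^a(X;\Z/\ell^n(d))$ are finite, since the geometric groups are finite and the Hochschild--Serre spectral sequence for $\Spec \F_q$ has cohomological dimension $1$. Mittag-Leffler then holds, and the limit equals the cohomology of the derived limit complex $R\Gamma(X;\Z_\ell(d))$, which is perfect. The complex $\Q_\ell/\Z_\ell(d)$ is the colimit of $\Z/\ell^n(d) = \Z_\ell(d)\otimes^L \Z/\ell^n$, and the short exact sequence holds on the level of these complexes.

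For $\ell = p$ I would use finiteness of $\rH^a(X;\Z/p^n(d))$. This follows from Milne's duality and the finiteness of logarithmic de Rham--Witt cohomology of smooth projective varieties over finite fields. The same Mittag-Leffler argument then applies.
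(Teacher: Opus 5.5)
Your argument is correct and follows the same route as the paper: take the long exact sequence attached to $0 \to \Z_\ell(d) \to \Q_\ell(d) \to \Q_\ell/\Z_\ell(d) \to 0$, identify the image of $\wt{\delta}$ with $\rH^3(X;\Z_\ell(d))_{\tors}$ as the kernel of rationalization, and note that $\ker \wt{\delta}$ is the image of $\rH^2(X;\Q_\ell(d))$, hence divisible. You also prove that this kernel is the \emph{maximal} divisible subgroup, because the quotient $\rH^3(X;\Z_\ell(d))_{\tors}$ is finite by finite generation over $\Z_\ell$; this is a step the paper's one-line proof leaves implicit.
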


\begin{proof}The long exact sequence associated to \eqref{eq: SES} reads
\[
\begin{tikzcd}[row sep = huge]
\ldots \ar[r] & \rH^2(X; \Q_\ell(d)) \ar[r] & \rH^2(X; \Q_\ell/\Z_\ell(d))  \ar[dll, "\wt \delta"'] \\  
 \rH^{3}(X; \Z_\ell(d)) \ar[r] &  
\rH^{3}(X; \Q_\ell(d))   \ar[r] & \ldots 
\end{tikzcd}
\]
From this, we see that the image of the boundary homomorphism is $\rH^3(X; \Z_\ell(d))_{\tors}$, and its kernel is divisible. Therefore, it factors over an isomorphism $\wt{\delta}$, as claimed. 
\end{proof}

\subsection{The (Milne--)Artin--Tate pairing}
Now let $X$ be a smooth, projective, geometrically connected variety of dimension $2d$ over a finite field $\F_q$ of characteristic $p$. We can now define the Milne--Artin--Tate pairing on $\rH^{2d}(X;  \Q_{\ell}/\Z_{\ell}(d))_{\nd}$ for each prime $\ell$, including $\ell = p$. While historically the cases $\ell \neq p$ and $\ell = p$ should be distinguished, with the former being due to Artin--Tate in \cite{Tate66} and the latter due to Milne in \cite{Milne75, Milne86}, we will refer to all as the ``Milne--Artin--Tate pairing'' in the rest of this article.

\begin{defn}[Milne--Artin--Tate pairing] For $x,x' \in  \rH^{2d}(X;  \Q_{\ell}/\Z_{\ell}(d))_{\nd}$, note that $x \smile \wt \delta x' \in \rH^{4d+1}(X;  \Q_{\ell}/\Z_{\ell}(2d))$. We define 
\[
\langle x, x' \rangle_{\mrm{MAT}} := \int_X x \smile (\wt \delta x') \in \Q_\ell/\Z_\ell.
\]
\end{defn}

\begin{remark}[Linking form]\label{rem:linking-form}
In the topological setting, this definition is well-known, and is called the ``linking form'' for an odd-dimensional manifold. For an even-dimensional manifold $M^{2d}$, there is a well-known intersection form on $\rH_{d}(M^{2d}; \Z)_{\mrm{free}}$. For an odd-dimensional manifold $M^{2d+1}$, there is no ``middle'' (co)homology, and there is instead the linking form on $\rH_d(M^{2d+1}; \Z)_{\tors} \cong \rH^{d+1}(M^{2d+1}; \Z)_{\tors}$. It can be described geometrically as follows. Let $u, v \in \rH_d(M^{2d+1}; \Z)_{\tors}$. Since $v$ is torsion, there is an integer $n \geq 1$ such that $n v$ bounds a $(d+1)$-cycle $V$. Intersecting $V$ with a cycle representing $u$, we obtain an integer $u \cdot V \in \Z$. But we need to do some accounting to make this well-defined. First, we need to divide by $n$ to renormalize for that choice, obtaining an element of $\Q$. Second, the choice of $V$ was itself only well-defined up to cycles, so the resulting rational number is only well-defined up to translation by an integer. Hence the linking form is ultimately valued in $\Q/\Z$. 
\end{remark}

Next we define an auxiliary pairing on the group $\rH^{2d}(X; \Z/\ell^n\Z(d))_{\nd}$ which will be a useful technical device to access the Milne--Artin--Tate pairing. The short exact sequence of sheaves
\[
0 \rightarrow \Z/\ell^n\Z(d) \xrightarrow{\ell^n}  \Z/\ell^{2n} \Z(d) \rightarrow \Z/\ell^n\Z(d) \rightarrow 0
\]
inducing the \emph{Bockstein operation}
\begin{equation}\label{eq:bockstein}
\beta \colon \rH^i(X; \Z/\ell^n\Z(d)) \rightarrow \rH^{i+1}(X; \Z/\ell^n\Z(d)).
\end{equation}

\begin{defn}
We define the pairing 
\[
\langle - , - \rangle_{n} \colon \rH^{2d}(X; \Z/\ell^n\Z(d)) \times \rH^{2d}(X; \Z/\ell^n\Z(d)) \rightarrow \Z/\ell^n\Z
\]
by 
\[
\langle x, y \rangle_n := \int_X x \smile \beta y .
\]
(Note that $\langle - , - \rangle_n$ need not be non-degenerate.)
\end{defn}

What is the relation between $\langle \cdot , \cdot \rangle_{n}$ and the Artin--Tate pairing? The long exact sequence associated to 
\[
0 \rightarrow \Z_{\ell}(d) \rightarrow \Z_{\ell}(d) \rightarrow \Z/\ell^n\Z(d) \rightarrow 0 
\]
provides a surjection
\begin{equation}\label{eq:torsion-bockstein}
\rH^{2d}(X; \Z/\ell^n\Z(d)) \twoheadrightarrow \rH^{2d+1}(X; \Z_\ell(d))[\ell^n].
\end{equation}

\begin{prop}\label{prop: compatibility} The map \eqref{eq:torsion-bockstein} intertwines the pairings $\langle \cdot , \cdot \rangle_n$ and $\langle - , - \rangle_{\mrm{MAT}}$, in the sense that the following diagram commutes:
\[
\xymatrix @C=0pc{
\rH^{2d}(X; \Z/\ell^n\Z(d)) \ar@{->>}[d] & \times &  \rH^{2d}(X; \Z/\ell^n\Z(d))  \ar@{->>}[d]  \ar[rrrrrrr]^{\langle \cdot , \cdot \rangle_n} &&&&&&& \rH^{4d+1}(X; \Z/\ell^n\Z(2d)) \ar[d]_{\sim}  \\
\rH^{2d+1}(X; \Z_\ell(d))[\ell^n] & \times & \rH^{2d+1}(X; \Z_\ell(d))[\ell^n]  \ar[rrrrrrr]_{\langle \cdot , \cdot \rangle_{\mrm{MAT}}} &&&&&&& \rH^{4d+1}(X; \Q_\ell/\Z_\ell(2d)) [\ell^n]
	}
\]
\end{prop}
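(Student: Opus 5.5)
Throughout, $\partial$ denotes the connecting map $\rH^{2d}(X;\Z/\ell^n(d))\to \rH^{2d+1}(X;\Z_\ell(d))$ of \eqref{eq:torsion-bockstein}. On torsion classes the Milne--Artin--Tate pairing will be written in its ``$\Z_\ell$-form'': for $u,v\in \rH^{2d+1}(X;\Z_\ell(d))_{\tors}$, choose $\tilde u\in \rH^{2d}(X;\Q_\ell/\Z_\ell(d))$ with $\wt\delta \tilde u = u$ using Lemma \ref{lem:tors-coh}, and set $\langle u,v\rangle_{\mrm{MAT}} = \int_X \tilde u\smile v$. The plan is to reduce the commutativity to compatibilities between connecting homomorphisms attached to maps of short exact sequences of coefficients.

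\textbf{Step 1: lift $\partial x$ through $\wt\delta$.} Let $\iota\colon \Z/\ell^n(d)\hookrightarrow \Q_\ell/\Z_\ell(d)$ be the map $a\mapsto a/\ell^n$. It fits into a morphism of short exact sequences from
\[
0\to\Z_\ell(d)\xrightarrow{\ell^n}\Z_\ell(d)\to\Z/\ell^n(d)\to0
\]
to
\[
0\to\Z_\ell(d)\to\Q_\ell(d)\to\Q_\ell/\Z_\ell(d)\to 0 .
\]
On the first terms it is the identity, and on the middle terms it is multiplication by $\ell^{-n}$. Naturality of connecting maps then gives $\wt\delta(\iota_* x)=\partial x$. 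So $\iota_*x$ is a valid lift $\tilde u$ of $u=\partial x$, and
\[
\langle \partial x,\partial y\rangle_{\mrm{MAT}}=\int_X \iota_*x\smile \partial y .
\]
The choice of lift only matters modulo divisible classes, which pair trivially with torsion.

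\textbf{Step 2: compare $\iota_*x\smile\partial y$ with $\iota_*(x\smile\beta y)$.} The key observation is that the Bockstein $\beta$ factors as $\beta = \rho\circ\partial$, where $\rho$ is reduction mod $\ell^n$. This again follows from naturality: map the $\Z_\ell$-sequence to
\[
0\to\Z/\ell^n\to\Z/\ell^{2n}\to\Z/\ell^n\to0
\]
by reducing the first two terms and using the identity on the last term. With this factorization, we use the fact that the cup product
\[
\Q_\ell/\Z_\ell(d)\otimes\Z_\ell(d)\to \Q_\ell/\Z_\ell(2d)
\]
restricted along $\iota$ factors through
\[
\Z/\ell^n(d)\otimes\Z/\ell^n(d)\to\Z/\ell^n(2d)\xrightarrow{\iota}\Q_\ell/\Z_\ell(2d).
\]
This holds because $\iota(a)\cdot b = \iota(a\bar b)$. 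It gives
\[
\iota_*x\smile\partial y=\iota_*(x\smile\rho\partial y)=\iota_*(x\smile\beta y).
\]

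\textbf{Step 3: identify the right-hand vertical map and conclude.} The right vertical map of the diagram is $\iota_*$ on $\rH^{4d+1}$. The trace maps $\int_X$ are compatible with $\iota$: this holds by construction of the $\Q_\ell/\Z_\ell$ trace as the colimit of the $\Z/\ell^n$ traces, via \eqref{eq:ell-adic-trace-absolute}. Consequently $\iota_*$ is an isomorphism onto the $\ell^n$-torsion, and the diagram commutes.

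\textbf{Main obstacle.} The delicate point is sign and ordering conventions. The definition of $\langle\cdot,\cdot\rangle_{\mrm{MAT}}$ puts $\wt\delta$ on the second variable, whereas Step 1 puts the lift on the first. To reconcile the two I would prove the graded Leibniz-type identity
\[
\int_X \tilde x\smile\partial y = \pm\int_X \partial\tilde x\smile y,
\]
obtained by applying the connecting map to the cup product and using that $\int_X$ kills the boundary of a class in $\rH^{4d}(X;\Q_\ell/\Z_\ell(2d))$ coming from $\Q_\ell$-coefficients. Alternatively, I would simply swap roles, using skew-symmetry up to sign. I would carry this out at the level of cochain complexes, where such Leibniz rules are standard. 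The same argument must work uniformly for syntomic cohomology when $\ell=p$, which only requires that the cup product be a map of complexes compatible with the coefficient sequences.
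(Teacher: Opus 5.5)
Your Steps 1--3 are correct and are exactly the formal diagram chase the paper defers to \cite[Proposition 2.5]{Feng20}: naturality of connecting maps for the two morphisms of coefficient sequences, the identity $\iota(a)\cdot b=\iota(a\bar b)$ for the inclusion $\iota\colon \Z/\ell^n\Z(d)\hookrightarrow\Q_\ell/\Z_\ell(d)$, and compatibility of the traces. Your ``main obstacle'' is illusory: transporting $\int_X x\smile\wt\delta x'$ through the isomorphism $\wt\delta$ gives precisely your $\Z_\ell$-form $\int_X\tilde u\smile v$ with the lift in the first slot, so no Leibniz identity and no appeal to skew-symmetry is needed (the paper in fact deduces skew-symmetry of the Milne--Artin--Tate pairing from this proposition).
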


\begin{proof}
This follows formally from a diagram chase: see \cite[Proposition 2.5]{Feng20}.
\end{proof}

Thanks to Proposition \ref{prop: compatibility}, in order to prove Theorem \ref{thm:symplecticityatp} and Theorem \ref{thm:symplecticityneqp} it suffices to show that 
$\langle - , - \rangle_n$ is alternating, which is what we will do. What makes $\langle - , - \rangle_n$ better to work with than $\langle - , - \rangle_{\mrm{MAT}}$? Philosophically this seems to come down to the fact that $\Z/\ell^n \Z$ enjoys a \emph{ring structure} while $\Q_{\ell}/\Z_{\ell}$ does not. The ring structure is (partly) responsible for extra structure on $\rH^*(X; \Z/\ell^n \Z)$ that will play a crucial role in our argument. To give a sample illustration of this, we will show how it can be used to show skew-symmetry. 

\begin{prop}\label{prop: skew-symmetric}
For every prime $\ell$, and every $n \geq 1$, the pairing $\langle - , - \rangle_n$ is skew-symmetric. 
\end{prop}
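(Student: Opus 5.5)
The plan is to use two properties: the Bockstein $\beta$ is a derivation with respect to cup product, and the integral of a Bockstein vanishes. Concretely, for $x,y \in \rH^{2d}(X;\Z/\ell^n\Z(d))$ we have $x\smile y \in \rH^{4d}(X;\Z/\ell^n\Z(2d))$. The Leibniz rule reads
\[
\beta(x\smile y) = \beta x \smile y + (-1)^{2d} x \smile \beta y = \beta x\smile y + x\smile \beta y .
\]
Graded commutativity of cup product gives $\beta x \smile y = (-1)^{(2d+1)(2d)} y\smile \beta x = y \smile \beta x$. Integrating, we get
\[
\langle x,y\rangle_n + \langle y,x\rangle_n = \int_X \beta(x\smile y).
\]
So it suffices to show that $\int_X \circ \beta \co \rH^{4d}(X;\Z/\ell^n\Z(2d)) \to \Z/\ell^n\Z$ is zero.

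\textbf{Step 1: the Leibniz rule.} I would establish it at the level of coefficients. The Bockstein is the boundary map for the extension $\Z/\ell^n \to \Z/\ell^{2n} \to \Z/\ell^n$, which is a square-zero extension of rings, compatible with twists. Lift $x,y$ to classes $\tilde x,\tilde y$ with $\Z/\ell^{2n}$ coefficients, at cochain level, in the derived category of sheaves (or syntomic complexes for $\ell=p$). Then $d(\tilde x\tilde y) = d\tilde x\,\tilde y \pm \tilde x\, d\tilde y$, and dividing by $\ell^n$ gives the formula. For $\ell = p$, this needs the multiplicative structure on $\Z/p^n(\ast)$ with the multiplication-by-$p^n$ map as a module map, which I take as part of the formalism described in \S\ref{ssec:cohog-dictionary}. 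The point is that this uses the ring structure of $\Z/\ell^{2n}$, as foreshadowed in the text.

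\textbf{Step 2: vanishing of $\int_X\beta$.} Factor $\beta$ as the connecting map $\rH^{4d}(X;\Z/\ell^n(2d)) \to \rH^{4d+1}(X;\Z/\ell^n(2d))$ for the sequence $\Z/\ell^n \to \Z/\ell^{2n} \to \Z/\ell^n$. Exactness shows that the image of $\beta$ equals the kernel of the map
\[
\iota \co \rH^{4d+1}(X;\Z/\ell^n(2d)) \to \rH^{4d+1}(X;\Z/\ell^{2n}(2d))
\]
induced by $\ell^n\co \Z/\ell^n\to\Z/\ell^{2n}$. By naturality of the trace \eqref{eq:ell-adic-trace-absolute} (and its syntomic analogue) in the coefficients, $\int_X$ intertwines $\iota$ with the injection $\ell^n \co \Z/\ell^n \hookrightarrow \Z/\ell^{2n}$. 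Both trace maps are isomorphisms in top degree $4d+1 = 2\dim X + 1$. Hence $\iota$ is injective, so $\operatorname{im}\beta = \ker\iota = 0$, and in particular $\int_X\beta = 0$.

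\textbf{Main obstacle.} I expect the main obstacle to be Step 2's compatibility of the trace isomorphisms for varying $n$, especially in the syntomic case $\ell=p$. There I would rely on the top-degree cohomology being $\Z/p^n$ compatibly with the transition maps. The alternative is to realize $\int_X$ as the mod $\ell^n$ reduction of a trace $\rH^{4d+1}(X;\Q_\ell/\Z_\ell(2d))\cong\Q_\ell/\Z_\ell$. The sign bookkeeping in Step 1 is routine.
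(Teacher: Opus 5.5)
Your proposal is correct and is essentially the paper's proof: the Leibniz rule and graded commutativity reduce $\langle x,y\rangle_n+\langle y,x\rangle_n$ to $\int_X\beta(x\smile y)$. Then $\beta$ vanishes on $\rH^{4d}(X;\Z/\ell^n\Z(2d))$ because its image is the kernel of $[\ell^n]$ on $\rH^{4d+1}$, which Poincar\'e duality identifies with the injection $\ell^n\colon \Z/\ell^n\Z\hookrightarrow\Z/\ell^{2n}\Z$. The trace compatibility you flag as the main obstacle is exactly what the paper invokes in that last step.
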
 

\begin{proof}
The assertion is equivalent to 
\[
x \smile \beta y  + y \smile \beta  x  = 0.
\]
Since $\beta$ is a derivation, we have $ x \smile \beta y  + y \smile \beta  x  = \beta(x \smile y)$. But the boundary map 
\[
\beta  \colon \rH^{4d}(X; \Z/\ell^n\Z(2d)) \rightarrow  \rH^{4d+1}(X; \Z/\ell^n\Z(2d))
\]
vanishes because its image is the kernel of 
\[
[\ell^{n}] \co \rH^{4d+1}(X; \Z/\ell^n\Z(2d)) \rightarrow \rH^{4d+1}(X; \Z/\ell^{2n}\Z(2d))
\]
which is identified with the inclusion $\ell^n \Z/\ell^{2n}\Z \hookrightarrow \Z/\ell^{2n}\Z$ by Poincar\'{e} duality. 
\end{proof}

\begin{cor}
The Milne--Artin--Tate pairing is skew-symmetric.
\end{cor}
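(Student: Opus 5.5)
The Corollary should follow by combining Proposition \ref{prop: skew-symmetric} with Proposition \ref{prop: compatibility}. The plan is to transport skew-symmetry from the finite-level pairings $\langle -,-\rangle_n$ to $\langle -,-\rangle_{\mrm{MAT}}$. This works because the comparison maps \eqref{eq:torsion-bockstein} are surjective and every element of the torsion group is killed by some power of $\ell$.

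First, identify the domain. By Lemma \ref{lem:tors-coh}, applied in degree $2d$ for our $2d$-dimensional $X$, the Milne--Artin--Tate pairing on $\rH^{2d}(X;\Q_\ell/\Z_\ell(d))_{\nd}$ corresponds to a pairing on $\rH^{2d+1}(X;\Z_\ell(d))_{\tors}$. This is the form in which it appears in the bottom row of Proposition \ref{prop: compatibility}.

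Next, take two elements $u,v \in \rH^{2d+1}(X;\Z_\ell(d))_{\tors}$.
\begin{itemize}
\item Since this group is a torsion $\Z_\ell$-module, there is an $n$ with $u,v \in \rH^{2d+1}(X;\Z_\ell(d))[\ell^n]$. For $\ell$-adic cohomology of a variety over a finite field it is moreover finitely generated, though we do not need this.
\item By the surjectivity of \eqref{eq:torsion-bockstein}, choose lifts $x,y \in \rH^{2d}(X;\Z/\ell^n\Z(d))$ of $u$ and $v$.
\item The commutative diagram of Proposition \ref{prop: compatibility} gives $\langle u,v\rangle_{\mrm{MAT}} = \iota(\langle x,y\rangle_n)$ and $\langle v,u\rangle_{\mrm{MAT}} = \iota(\langle y,x\rangle_n)$. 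Here $\iota$ is the isomorphism $\Z/\ell^n\Z \cong \rH^{4d+1}(X;\Z/\ell^n\Z(2d)) \xrightarrow{\sim} \rH^{4d+1}(X;\Q_\ell/\Z_\ell(2d))[\ell^n] \subset \Q_\ell/\Z_\ell$, and it is additive.
\item Proposition \ref{prop: skew-symmetric} gives $\langle x,y\rangle_n = -\langle y,x\rangle_n$. Applying $\iota$ yields $\langle u,v\rangle_{\mrm{MAT}} = -\langle v,u\rangle_{\mrm{MAT}}$.
\end{itemize}

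The only point needing care is the bookkeeping in the previous step. The vertical maps in Proposition \ref{prop: compatibility} must be compatible with the identification of $\rH^{4d+1}$ with $\Q_\ell/\Z_\ell$ via $\int_X$, with no stray sign. That diagram also has to be read consistently with the precise definition of $\langle-,-\rangle_{\mrm{MAT}}$ through $\wt\delta$. Both points are exactly the content of Proposition \ref{prop: compatibility}, which we take as given, so there is no genuine obstacle.

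Finally, the argument is independent of $n$, since any two elements lie in a common $\ell^n$-torsion subgroup. It also covers $\ell = p$, because all the cited results are stated for arbitrary primes $\ell$.
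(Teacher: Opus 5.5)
Your proposal is correct and is essentially the paper's argument. The paper states the corollary immediately after Proposition \ref{prop: skew-symmetric} as a consequence of it, transported to $\langle -,-\rangle_{\mrm{MAT}}$ via Proposition \ref{prop: compatibility} and the surjection \eqref{eq:torsion-bockstein}, exactly as you do by placing both torsion classes in a common $\ell^n$-torsion subgroup.
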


\subsection{Toy example: quadratic reciprocity}\label{ssec:QR} Thus far, the discussion has been quite formal (given arithmetic duality). Nevertheless, it encodes interesting arithmetic information. As a toy illustration, we recast quadratic reciprocity in terms of arithmetic duality.

For two disjoint knots $L,K \subset S^3$, there is a \emph{linking number} $\lk(L,K) \in \Z$ that measures ``how many times (counted with multiplicity) $K$ winds around $L$''. This can be defined algebraically by noting that $\rH_1(S^3 \setminus K; \Z) \cong \Z$. The knot $K$ defines a class in $\rH_1(S^3 \setminus L; \Z)$, whose image in $\Z$ under this identification is the linking number.
 
\begin{figure}[htbp]
    \centering
    \includegraphics[scale=.15]{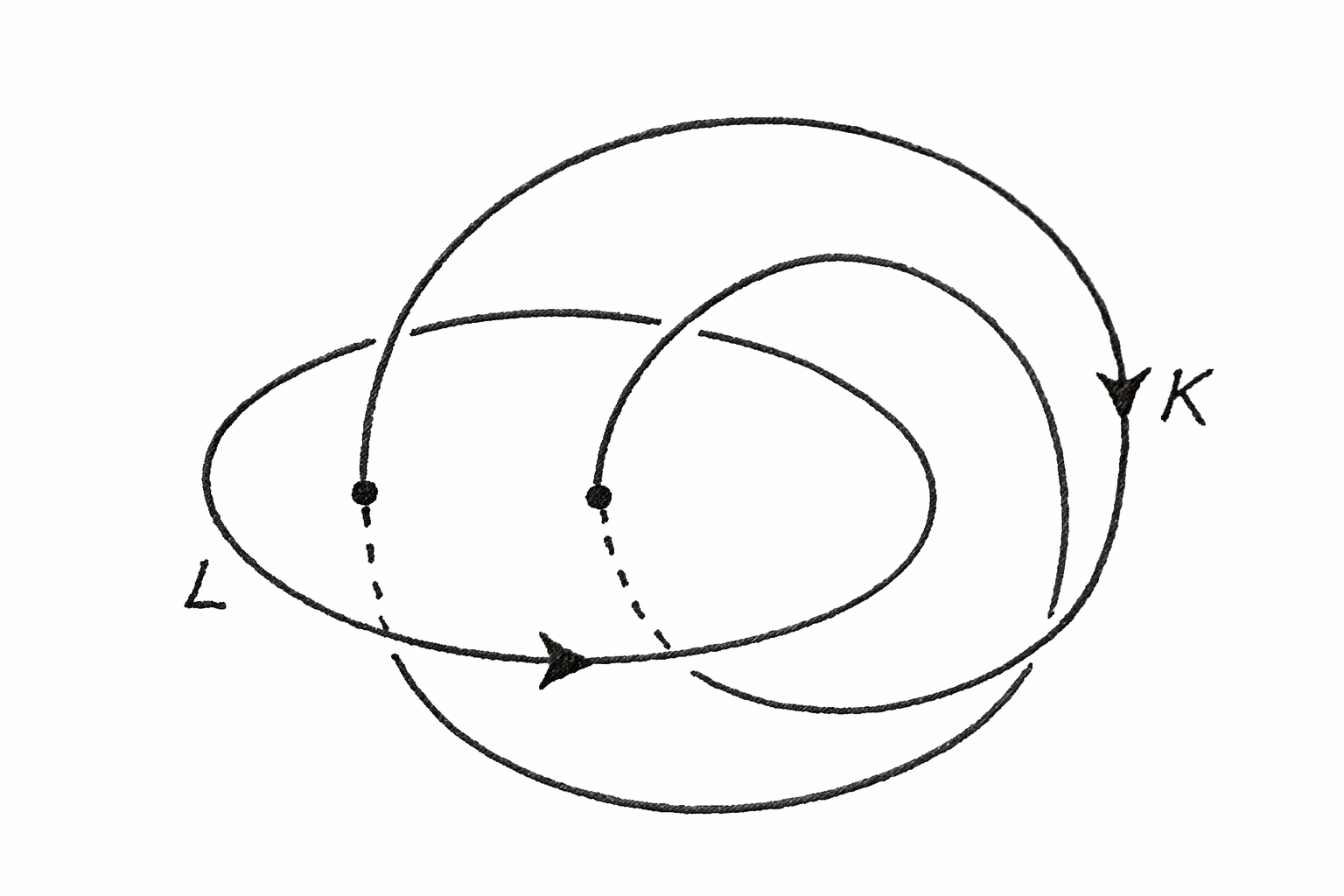}
    \caption{$\lk(L, K)=2$}
    \label{fig:link2}
\end{figure}
 There is an analogue of the linking number for primes. We just give the case of odd primes for simplicity: if $p,q$ are odd primes, then 
\begin{equation}\label{eq:prime-link}
\lk(p,q) \pmod{2} = \begin{cases} 0 & (-1)^{\epsilon(p)} p \text{ is a square mod $q$} \\ 1 & \text{otherwise} \end{cases} \in \Z/2,
\end{equation}
where $\epsilon(p) = 0$ if $p \equiv 1 \pmod{4}$ and $\epsilon(p) = 1$ if $p \equiv 3 \pmod{4}$. To see why this definition is analogous to a linking number, we recast the linking number in algebraic terms. One calculates that $\rH_1(S^3 \setminus K; \Z) \cong \Z$, which implies that there is a unique $\Z/2\Z$-cover $T \rightarrow S^3 \setminus K$. Then $\lk(L,K) \equiv 0 \pmod{2}$ if and only if $L$ splits in $T$. 

Analogously, there is a unique \'etale double cover $\Spec \cO \rightarrow \Spec \Z[1/p]$, for which $\cO = \Z[1/p][\frac{1+\sqrt{(-1)^{\epsilon(p)} p}}{2}]$. Then (using that $q$ is odd) $q$ splits in $\cO$ if and only if $(-1)^{\epsilon(p)} p$ is already a perfect square mod $q$. This justifies why \eqref{eq:prime-link} is analogous to the mod $2$ linking number. 

Now, it is visually clear that the linking number of knots is symmetric: $\lk(L, K) = \lk(K,L)$. See \cite[Problem 13]{Mil97} for an outline of the proof. Much less intuitive is the assertion that $\lk(p,q)= \lk(q,p)$ for distinct odd primes $p$ and $q$---this is the famous \emph{Law of Quadratic Reciprocity} proved by Gauss! 

This example demonstrates that simple topological dualities can be parallel to deep arithmetic ones under our dictionary. 
To be clear, this is not a proof of quadratic reciprocity; class field theory is already built into Tate's arithmetic duality theorems. The example should be thought of as presenting arithmetic duality as a \emph{worldview}, which makes quadratic reciprocity seem natural.

\section{The connection to Steenrod operations}\label{sec:steenrod} One of the most interesting facets of this story is the connection between Conjecture \ref{conj:symplecticity} and Steenrod operations. We will attempt to convey how Steenrod operations enter the narrative. 

\subsection{What are Steenrod operations?}\label{ssec:Steenrod} Steenrod operations were discovered by Norman Steenrod \cite{Ste47} in the context of singular cohomology of topological spaces, so we begin our story in that context. 

Let $T$ be a topological space. Then $\rH^*(T; \Z/2)$ is a commutative ring over $\F_2$, and as such has a Frobenius endomorphism $x \mapsto x^2$. However, it turns out that because $\rH^*(T; \Z/2)$ arises in a special way, it automatically inherits the richer structure of a module over the \emph{Steenrod algebra}. Concretely, this amounts to the existence of natural transformations 
\[
\Sq^i \co \rH^*(T; \Z/2) \rightarrow \rH^{*+i}(T; \Z/2),
\]
called ``Steenrod squares'' or ``Steenrod operations'', satisfying certain relations. It may be helpful to think of $\Sq^j$ as being ``derived'' endomorphisms of the Frobenius endomorphism, although we emphasize that this is not meant in any precise technical sense. On $\rH^j(T; \Z/2)$, the operation $\Sq^j$ is the Frobenius (i.e., squaring) operation $x \mapsto x^2$, and $\Sq^i$ vanishes for $i>j$. Thus, the indexing is that $\Sq^{j-1}$ is the first ``derived'' operation. Let us explain what it is, following the presentation of \cite[Chapter 2]{MoTan68}.

\subsubsection{The first Steenrod operation} 
As is well-known, the cup product on cohomology is graded commutative. The cup product is induced by a chain-level operation, which however is \emph{not} graded-commutative: at the chain level, we only have commutativity ``up to homotopy''. Steenrod expressed this systematically in \cite{Ste47} by constructing a ``cup-$1$ product'' 
\begin{align*}
\mrm{cup}_1 \co C^r(T;\Z) \otimes C^s(T;\Z) & \rightarrow C^{r+s-1}(T;\Z) \\
u \otimes v  & \mapsto u \smile_1 v 
\end{align*}
such that for all $u \otimes v \in C^r(T;\Z) \otimes C^s(T;\Z)$,  we have 
\begin{equation}\label{eq:cup-1-product}
d (u  \smile_1 v) = -d  u \smile_1 v + (-1)^{r+1} u  \smile_1 d v +  u \smile  v - (-1)^{rs} v \smile u.
\end{equation}
If we take $u = v \in Z^r(T; \Z)$ to be a cocycle, then this relation specializes to 
\[
d(u \smile_1 u) = u \smile u - (-1)^{r^2} u \smile u
\]
which implies that $u \smile_1 u$ is itself a cocycle with coefficients modulo $2$. The map $u \mapsto u \smile_1 u$ descends to the operation 
\[
\Sq^{r-1} \co \rH^r(T; \Z/2) \rightarrow \rH^{2r-1}(T; \Z/2).
\]

\subsubsection{All Steenrod operations} 
More generally, for each $i=1, 2, \ldots$ Steenrod defined ``cup-$i$ products''
\begin{align*}
\mrm{cup}_i \co C^r(T;\Z) \otimes C^s(T;\Z) &\rightarrow C^{r+s-i}(T;\Z) \\
u \otimes v & \mapsto u \smile_i v 
\end{align*}
such that for all $u \otimes v \in C^r(T;\Z) \otimes C^s(T;\Z)$, we have 
\begin{equation}\label{eq: homotopy formula}
d(u \smile_i v) - (-1)^i  \mrm{cup}_i(d(u \otimes v)) = (-1)^{i-1} u \smile_{i-1} v - (-1)^{rs} v \smile_{i-1}u. 
\end{equation}
Taking $u=v \in Z^r(T; \Z)$, we find that the operation $u \mapsto u \smile_i u$ descends to a cohomology operation 
\[
\Sq^{r-i} \co \rH^r(T; \Z/2) \rightarrow \rH^{2r-i}(T; \Z/2).
\]

\begin{remark}
The Frobenius endomorphism of $\rH^*(T; \Z/2)$ can be thought of as specializing the multiplication, a priori a function of two variables, to diagonal inputs. Analogously, the Steenrod operations can be thought of as specializing the higher $\mrm{cup}_i$-products, a priori functions of two variables, to diagonal inputs. This is the sense in which they can be thought of as ``derived'' endomorphisms of Frobenius. 
\end{remark}

\subsubsection{Generalized coefficients}\label{sssec:generalized-Steenrod}
The Steenrod squares are used to analyze the (Milne--)Artin--Tate pairing on $\Br(X)[2]$. If one wants to analyze the pairing on $\Br(X)[2^n]$, then one needs ``generalized Steenrod squares'' defined in \cite[\S 3.5]{Feng20} (and surely well-known to experts, although we did not find discussion of them in the literature) of the form 
\[
\wt \Sq^i \co \rH^r(T; \Z/2^n) \rightarrow \rH^{r+i}(T; \Z/2^n). 
\]
For simplicity, we shall omit this part of the story in this article.

\subsection{$\EE_\infty$ Steenrod operations}\label{ssec:E-infty-steenrod}
We just discussed Steenrod operations in their original setting of algebraic topology. 
The nature of that discussion already hinted that the construction was not fundamentally tied to topological spaces, and should have a more general explanation in homotopical algebra. This is indeed the case, which is fortunate because we actually need the Steenrod operations in a different setting: the cohomology of algebraic varieties. 

In general, the ring of cochains of any site with coefficients in a commutative ring lifts to an ``$\EE_\infty$-algebra'', which equips the cohomology ring with Steenrod operations \cite{May70}. In particular, this applies to both the $\ell$-adic \'etale cohomology studied in \cite{Feng20}, and the syntomic cohomology studied in \cite{CF}. We call these ``\emph{$\EE_\infty$ Steenrod operations}'' in \cite{CF}, to distinguish them from another type of operation which is related to Voevodsky's \emph{motivic Steenrod algebra} (which we have not yet discussed). The $\EE_\infty$-operations are the ones used in \cite{Feng20}, although there it was technically more convenient to follow a different construction using \'etale homotopy theory, and thus bootstrap the operations directly from topology. 

In the $p$-adic setting, to deal with the twists, we should package coefficients in the following way, because we want coefficients to have a \emph{ring structure}. We have maps 
\[
\Z/p(b) \otimes \Z/p(b') \rightarrow \Z/p(b+b')
\]
which assemble into a commutative ring structure on
\[
\bigoplus_{b \in \Z} \Z/p (b).
\]
This equips 
\[
\bigoplus_{b \in \Z} \rH^*(X; \Z/p(b))
\]
with $\EE_\infty$ Steenrod operations. For $p = 2$, each $\Sq^i$ has the effect of doubling the weight $b$. In the $\ell$-adic settings, it is arguably most canonical to track this grading, although its effect can be trivialized using the canonical identifications $\Z/2 = \Z/2(1) = \Z/2(2) = \ldots$.

\subsection{Bockstein operations} The connection between Steenrod operations and the (Milne--)Artin--Tate pairing comes from the following key formula. 

\begin{thm}\label{thm: MAT form}
Let $X$ be one of the following objects: 
\begin{itemize}
\item a connected, closed, oriented manifold of dimension $4d+1$, or 
\item a smooth, proper, geometrically connected variety of dimension $2d$ over a finite field. 
\end{itemize}
Then for all $u \in \rH^{2d}(X; \Z/2^n(d))$, we have
\begin{equation}\label{eq:bock-identity}
u \smile \beta_n (u) = [2^{n-1}] \circ \Sq^{2d}(\ol{\beta_n (u)})
\end{equation}
where: 
\begin{itemize}
\item $\beta_n \co \rH^{2d}(X; \Z/2^n(d)) \rightarrow \rH^{2d+1}(X; \Z/2^n(d))$ is the Bockstein homomorphism from \eqref{eq:bockstein},
\item $\ol{\beta_n (u)}$ denotes the reduction of $\beta_n (u)$ mod $2$. 
\item $[2^{n-1}] \co \rH^{4d+1}(X; \Z/2(2d)) \rightarrow \rH^{4d+1}(X; \Z/2^n(2d))$ is the map on cohomology induced by the map of coefficients $\Z/2(2d)  \rightarrow \Z/2^n(2d)$ given by multiplication by $2^{n-1}$. 
\end{itemize}
\end{thm}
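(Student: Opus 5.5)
The strategy is a cochain-level computation using the $\smile_1$-product from \S\ref{ssec:Steenrod} (in the variety case, its analogue in the $\EE_\infty$-structure on the cochains, \S\ref{ssec:E-infty-steenrod}). Work with integral-type lifts: choose a cochain $\tilde u$ with coefficients in $\Z/2^{2n}(d)$ (or in $\Z_2(d)$) reducing to a cocycle representative of $u$. Then $d\tilde u = 2^n w$, where $w$ is a cochain whose mod $2^n$ reduction is a cocycle representing $\beta_n(u)$. With these choices, $u\smile\beta_n(u)$ is represented by $\tilde u\smile w$ mod $2^n$. The identity to exploit is the homotopy formula \eqref{eq:cup-1-product} applied to $\tilde u\otimes\tilde u$, with $r=2d$ even:
\[
d(\tilde u\smile_1\tilde u) = -d\tilde u\smile_1\tilde u - \tilde u\smile_1 d\tilde u + \tilde u\smile\tilde u - \tilde u\smile\tilde u .
\]
The last two terms cancel, so $d(\tilde u\smile_1 \tilde u) = -2^n(w\smile_1\tilde u+\tilde u\smile_1 w)$. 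I would pair this with the Leibniz rule $d(\tilde u\smile\tilde u)=2^n(w\smile\tilde u+\tilde u\smile w)$, and then use \eqref{eq:cup-1-product} for $w\otimes\tilde u$ to write $w\smile\tilde u$ as $\tilde u\smile w$ plus $d(w\smile_1\tilde u)$ plus correction terms involving $dw\smile_1\tilde u$ and $w\smile_1 d\tilde u$. Since $d\tilde u=2^n w$, these correction terms are of the form $2^n(w\smile_1 w)$ plus terms divisible by $2^n$ that contain $dw$.

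Carrying this out, one finds that $2\,\tilde u\smile w$ is cohomologous, modulo $2^{n+1}$, to $2^n(w\smile_1 w)$, up to coboundaries. The second key step is to remove the factor $2$. I would not literally divide by $2$. Instead, I would note that the computation shows the class $\tilde u\smile w$ mod $2^n$ lies in the image of $[2^{n-1}]\colon \rH^{4d+1}(X;\Z/2)\to\rH^{4d+1}(X;\Z/2^n)$ and agrees there with $[2^{n-1}](\ol{w}\smile_1\ol{w})$. This works because $\rH^{4d+1}(X;\Z/2^n(2d))\cong\Z/2^n$ by Poincar\'e duality (\S\ref{sssec:arithmetic-cohomology}), and the Bockstein argument from Proposition \ref{prop: skew-symmetric} makes the reduction maps between different $2$-power coefficients in top degree behave like $\Z/2^{m}\subset\Z/2^{m'}$. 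So it suffices to compare $2\langle u,u\rangle_n$ with the corresponding pairing computed at level $2^{n+1}$, and Poincar\'e duality detects the class from its image. Finally, $\ol w$ is a mod $2$ cocycle representing $\ol{\beta_n(u)}$ in degree $2d+1$, so $\ol w\smile_1\ol w$ represents $\Sq^{(2d+1)-1}=\Sq^{2d}$ of it, which gives \eqref{eq:bock-identity}.

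For the variety case, I would run the same argument in the $\EE_\infty$-algebra $\bigoplus_b R\Gamma(X;\Z/2^{m}(b))$, using that $\smile_1$ and the identification of $x\mapsto x\smile_1 x$ with $\Sq^{r-1}$ are formal consequences of the $\EE_\infty$ (indeed $\EE_2$) structure. Alternatively, for $\ell\neq p$ one can transport the topological identity via \'etale homotopy as in \cite{Feng20}.

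I expect the main obstacle to be the bookkeeping in the division-by-$2$ step: keeping track of which terms are divisible by $2^n$ versus $2^{n+1}$, the handling of $dw$ terms (note $dw=0$ if we take $\Z_2$-lifts, which I would do to simplify), and the signs. A cleaner alternative I would try first is to prove the universal identity $x\smile\beta_n x=[2^{n-1}]\Sq^{2r}\ol{\beta_n x}$ modulo coboundaries in the free $\EE_\infty$-algebra on a degree-$2r$ class with a $2^n$-nullhomotopy. This reduces the identity to a computation in cohomology of Eilenberg--MacLane objects, and the only remaining indeterminacy would be killed by the top-degree Poincar\'e duality structure.
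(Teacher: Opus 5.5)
Your proposal is correct and is essentially the paper's argument, extended beyond the $n=1$ topological case treated there. With $\Z_2$-lifts your identities combine to $d(\tilde u\smile\tilde u - 2^n\, w\smile_1\tilde u) = 2^{n+1}(\tilde u\smile w - 2^{n-1}\, w\smile_1 w)$. For $n=1$ this is exactly the paper's lift of $u^2$ whose coboundary is divisible by $4$ (the paper phrases it as computing the secondary Bockstein of $u^2$), and in both cases the conclusion comes from torsion-freeness of top-degree cohomology via Poincar\'e duality.

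One correction to your bookkeeping: the cochain computation alone shows only that $2^{n+1}$ times the difference is a coboundary. Your intermediate claim modulo $2^{n+1}$ already needs Poincar\'e duality; it fails, for instance, for $u=\iota_2$ on $K(\Z/2,2)$. So apply the injectivity to $[2^{n+1}]\colon \rH^{4d+1}(X;\Z/2^n(2d))\to \rH^{4d+1}(X;\Z/2^{2n+1}(2d))$, or equivalently use $\rH^{4d+1}(X;\Z_2(2d))\cong\Z_2$, as the paper does.
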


We caution that the identity \eqref{eq:bock-identity} makes sense in large generality, for example without assuming that $X$ is smooth or proper or even of dimension $2d$, but is not true without assumptions. What is true is that in general, the difference between the left and right sides in Theorem \ref{thm: MAT form} occurs naturally as a differential of some element in a spectral sequence, the \emph{Bockstein spectral sequence}, which ``converges from the mod $2$ cohomology to integral cohomology''. The Theorem is then proved by calculating that this differential vanishes for other reasons, which in this case ultimately comes from information about the integral cohomology that we have thanks to Poincar\'{e} duality. 

\begin{proof}[Hint of proof]
We will give a proof of \eqref{eq:bock-identity} for a special case: we assume $n=1$, and we consider only the topological setting. To emphasize this, we write $T := X$. This case is enough to capture the core algebra, in particular illuminating the entrance of Steenrod operations, while avoiding nontrivial but orthogonal technical issues. 

In this case, the proof has the following format. We will produce a sequence of cohomology operations $\beta_1, \beta_2 $ on  $\rH^*(T; \Z/2\Z)$ such that $\beta_2$ is defined on the kernel of $\beta_1$ and takes values in the cokernel of $\beta_1$. The operation $\beta_1 = \beta$ is the Bockstein operation of \eqref{eq:bockstein}. It is described by the following recipe: 
\begin{enumerate}
\item Lift $x \in \rH^*(T; \Z/2\Z)$ to an integral cochain $\wt{x} \in C^*(T; \Z)$. 
\item Since $x$ came from a cocycle mod $2$, we have $d \wt{x} = 2 \wt{y}$ for some cocycle $\wt{y} \in Z^*(T; \Z)$. 
\item Output $\beta(x) = y$, the cohomology class of $\wt{y} \pmod{2}$.
\end{enumerate}
Next we define the ``secondary'' operation $\beta_2$ on the kernel of $\beta$. If $\beta(x) =0$, this means that the lifts in the recipe above could have been chosen so that $d \wt{x} = 4 \wt{y}'$ was actually divisible by $4$. Then set $\beta_2(x)$ to be the cohomology class of $\wt{y}' \pmod{2}$. When tracking the choices involved, one sees $\beta_2$ is well-defined up to the image of $\beta_1$. 

Consider $\beta_1$ and $\beta_2$ applied to $x = u^2$ as in Theorem \ref{thm: MAT form}. Then it is clear that 
\[
\beta(x) = 2 u \smile \beta(u) = 0 \in \rH^{4d+1}(T; \Z/2\Z(2d)).
\]
To find a suitable $\wt{y}'$, we need to find an integral cochain lift of $u^2$ whose coboundary is divisible by $4$. We start by picking an arbitrary lift $\wt{u} \in C^{2d}(T; \Z)$ and then squaring it. Since $\wt u$ is a cocycle mod $2$, it follows that $d \wt{u} = 2 \wt v$ for some $\wt v \in C^{2d+1}(T; \Z)$, whose mod $2$ cohomology class is $\beta(u)$. Then we find using \eqref{eq:cup-1-product} that
\begin{align*}
d(\wt{u}^2   ) &=  2\wt{v} \smile  \wt{u} +  2 \wt{u} \smile \wt{v} \\
& = 2(2\wt{v} \smile \wt{u}  + [\wt{u} ,  \wt{v} ]) \\
&= 4 \wt{v}\smile \wt{u} +  2 (d \mrm{cup}_1(\wt{v} \otimes \wt{u}) + \mrm{cup}_1 d(\wt{v} \otimes \wt{u}) ) \\
&= 4 \wt{u}\smile \wt{v} +  d2 \mrm{cup}_1(\wt{v} \otimes \wt{u}) + 4\mrm{cup}_1 (\wt{v}  \otimes \wt{v}) .
\end{align*}
Rearranging this to say 
\[
d(\wt{u}^2-  2\mrm{cup}_1(\wt{u} \otimes \wt{v}) ) = 4 (\wt{u}\smile \wt{v} +   \mrm{cup}_1(\wt{v}  \otimes \wt{v}) ),
\]
we have successfully produced a lift $\wt{x} := \wt{u}^2-  2\mrm{cup}_1(\wt{u} \otimes \wt{v})$ whose coboundary is $4$ times $\wt{y}' = \wt{u} \smile \wt{v} +   \mrm{cup}_1(\wt{v}  \otimes \wt{v})$. By definition, the cohomology class of the mod $2$ reduction of $\wt{y}'$ is $u \smile \beta(u) + \Sq^{2d}(\beta (u))$. 

Note that $\wt{u} \smile \wt{v}$ and $\wt{y}'$ visibly represent torsion classes in $\rH^{4d+1}(T; \Z)$. Since this group is torsion-free by Poincar\'{e} duality, the images of $\beta_1$ and $\beta_2$ must both vanish in $\rH^{4d+1}(T; \Z/2)$. This shows that $ u \smile \beta(u) + \Sq^{2d}(\beta(u)) = 0 \in \rH^{4d+1}(T; \Z/2)$, as desired. 

\end{proof}

\subsection{Overview of the rest of the article} Recall from Remark \ref{rem:linking-form} that the linking form is the topological analogue of the Milne--Artin--Tate pairing. Our proofs of Theorem \ref{thm:symplecticityneqp} and Theorem \ref{thm:symplecticityatp} can be organized into several layers, depending on how setting-specific they are (see Figure \ref{fig:proof-layers}). 

Some of the initial layers are universal enough to apply equally well to the linking form, and they can be used to give a necessary and sufficient criterion for the linking form to be alternating. Indeed, the connection to Steenrod operations in Theorem \ref{thm: MAT form} is of this nature.

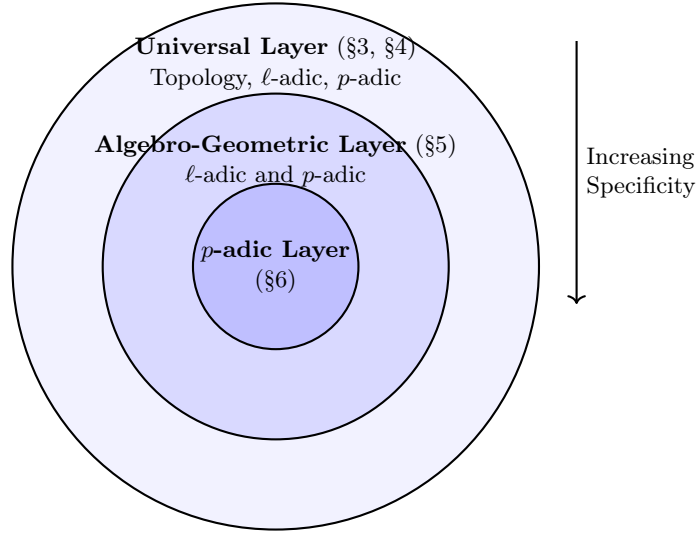
\begin{figure}[h]
    \centering
    \begin{tikzpicture}[node distance=2cm, font=\small]
        \draw[thick, fill=blue!5] (0,0) circle (3.5cm);
        \draw[thick, fill=blue!15] (0,0) circle (2.3cm);
        \draw[thick, fill=blue!25] (0,0) circle (1.1cm);

        \node at (0, 2.9) {\textbf{Universal Layer} (\S \ref{sec:steenrod}, \S \ref{sec:linking-form})};
        \node at (0, 2.5) {Topology, $\ell$-adic, $p$-adic};
        
        \node at (0, 1.6) {\textbf{Algebro-Geometric Layer} (\S \ref{sec:ell-adic})};
        \node at (0, 1.2) {$\ell$-adic and $p$-adic};
        
        \node at (0, 0.2) {\textbf{$p$-adic Layer}};
        \node at (0, -0.2) {(\S \ref{sec:ellequalsp})};

        \draw[->, thick] (4, 3) -- (4, -0.5) node[midway, right, align=left] {Increasing\\Specificity};
        
    \end{tikzpicture}
    \caption{Organization of the remaining sections of this paper.}
    \label{fig:proof-layers}
\end{figure}

Over the rest of the article, we will present the layers in order from the most general to the most specific. In \S \ref{sec:linking-form}, we study the symplecticity of the linking form using ideas which are common to all three settings (``topology'', ``$\ell$-adic'', and ``$p$-adic''). Then in \S \ref{sec:ell-adic}, we sketch ideas going into the proof of Theorem \ref{thm:symplecticityneqp} which are special to the algebro-geometric (``$\ell$-adic'' and ``$p$-adic'') settings. Finally, in \S \ref{sec:ellequalsp} we sketch ideas going into the proof of Theorem \ref{thm:symplecticityatp}, which are furthermore special to the $p$-adic setting.

\section{Analysis of the linking form}\label{sec:linking-form}

The topological analogue of the Milne--Artin--Tate pairing on a variety of dimension $2d$ is the linking form (Remark \ref{rem:linking-form}) on an orientable, closed, connected manifold $M$ of dimension $4d+1$. The question of when this linking form is alternating can be viewed as a toy metaphor for Conjecture \ref{conj:symplecticity}, and we will study it in this section.

\subsection{Criterion for alternation}\label{ssec:linking-form-alternation}

Motivated by Theorem \ref{thm: MAT form}, we want to be able to compute Steenrod operations such as 
\[
\Sq^{2d} \co \rH^{2d+1}(M; \Z/2(d)) \rightarrow \rH^{4d+1}(M; \Z/2(2d)).
\]
One observation is that this map must, by Poincar\'{e} duality, be represented by a unique class $v_{2d} \in \rH^{2d}(M; \Z/2(d))$, in the sense that 
\[
\Sq^{2d} (x) = v_{2d} \smile x \text{ for all } x \in \rH^{2d+1}(M; \Z/2(d)).
\]
We can then give a necessary and sufficient criterion for the linking form to be alternating in terms of this class $v_{2d}$. 

\begin{thm}[{\cite[Theorem 7.4]{Feng20}}]\label{thm:alternating-linking-form}
Let $M$ be an orientable, closed, connected manifold of dimension $4d+1$. Then the linking form on $\rH^{2d+1}(M; \Z)_{\tors}$ is alternating if and only if $v_{2d}$ lifts to $\rH^{2d}(M; \Z)$, i.e., if and only if $\wt{\beta} (v_{2d}) = 0$ where $\wt{\beta}$ is the boundary map for the short exact sequence
\[
0 \rightarrow \Z \xrightarrow{2} \Z \rightarrow \Z/2 \Z \rightarrow 0.
\]
\end{thm}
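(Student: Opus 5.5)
We work in the topological setting with $\ell = 2$, so the twists are meaningless notation. The plan is to express the linking form on the diagonal through Theorem \ref{thm: MAT form}, and then convert the resulting criterion into a statement about $v_{2d}$ via Poincaré duality.

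\textbf{Step 1: reduce to the pairings $\langle -,-\rangle_n$.} By Proposition \ref{prop: compatibility}, the surjection \eqref{eq:torsion-bockstein} intertwines $\langle -,-\rangle_n$ with the linking form on $\rH^{2d+1}(M;\Z)[2^n]$. Only the $2$-primary part matters, since on odd-order torsion skew-symmetry (Proposition \ref{prop: skew-symmetric}) already forces alternation. Hence the linking form is alternating if and only if, for every $n$ and every $u \in \rH^{2d}(M;\Z/2^n)$, we have $\int_M u \smile \beta_n u = 0$.

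\textbf{Step 2: rewrite the diagonal values.} By Theorem \ref{thm: MAT form},
\[
u\smile\beta_n u = [2^{n-1}]\,\Sq^{2d}(\ol{\beta_n u}) = [2^{n-1}](v_{2d}\smile \ol{\beta_n u}).
\]
So the diagonal value equals $2^{n-1}\int_M v_{2d}\smile \ol{\beta_n u}$. The map $[2^{n-1}]$ is injective on $\rH^{4d+1}$ by Poincaré duality, so alternation is equivalent to $\int_M v_{2d}\smile \ol{\beta_n u}=0$ for all $n$ and all $u$.

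\textbf{Step 3: identify the classes $\ol{\beta_n u}$.} The reduction $\ol{\beta_n u}$ is the mod $2$ reduction of $\wt\beta_n u \in \rH^{2d+1}(M;\Z)[2^n]$, where $\wt\beta_n$ is the integral Bockstein. As $n$ and $u$ vary, these classes range over the mod $2$ reductions of all torsion classes in $\rH^{2d+1}(M;\Z)$. Let $\rho$ denote reduction mod $2$ and $\wt\beta$ the integral Bockstein of $0\to\Z\xrightarrow{2}\Z\to\Z/2\to0$. The criterion therefore becomes
\[
\int_M v_{2d}\smile \rho(t)=0 \quad\text{for all } t\in \rH^{2d+1}(M;\Z)_{\tors}.
\]

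\textbf{Step 4: translate into $\wt\beta(v_{2d})=0$.} This is the step I expect to be the main obstacle. The key identity is
\[
\int_M v\smile\rho(t) = \mathrm{lk}(\wt\beta v, t)
\]
for $v\in \rH^{2d}(M;\Z/2)$ and $t$ torsion. To prove it, I would unwind the definition $\langle x, x'\rangle_{\mrm{MAT}} = \int x\smile\wt\delta x'$ using the compatible coefficient sequences $\Z\xrightarrow{2}\Z\to\Z/2$ and $\Z\to\Q\to\Q/\Z$. The point is that $\wt\beta v$ is the image of $v$ under $\Z/2 \cong \tfrac12\Z/\Z\subset\Q/\Z$ followed by $\wt\delta$. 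The identity then follows from the graded symmetry of the linking form, up to a sign that is invisible mod $2$, together with a chain-level check that the pairings match.

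Granting this identity, the linking form is non-degenerate on $\rH^{2d+1}(M;\Z)_{\tors}$ by Poincaré duality. So the vanishing of $\mathrm{lk}(\wt\beta v_{2d}, t)$ for all torsion $t$ is equivalent to $\wt\beta v_{2d}=0$. By exactness, that is exactly the condition that $v_{2d}$ lifts to $\rH^{2d}(M;\Z)$, which proves the theorem.
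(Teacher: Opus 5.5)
Your argument is correct. Its opening moves are the same as in the paper's sketch: reduce to the $2$-primary part by skew-symmetry, pass to $\langle -,-\rangle_n$ via Proposition \ref{prop: compatibility}, apply Theorem \ref{thm: MAT form}, and use the defining property of $v_{2d}$. The differences are in the final step and in scope.

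\textbf{What the paper does.} Its sketch proves only the $n=1$ shadow of the statement: alternation on $\rH^{2d+1}(M;\Z)[2]$ holds iff $v_{2d}$ lifts to $\rH^{2d}(M;\Z/4)$. It finishes with three ingredients:
\begin{itemize}
\item the Leibniz rule $v_{2d}\smile\beta u=\beta(v_{2d}\smile u)-\beta(v_{2d})\smile u$;
\item the vanishing of $\beta$ on $\rH^{4d}$;
\item perfectness of mod $2$ Poincar\'e duality.
\end{itemize}

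\textbf{What you do.} You treat all $n$ at once and factor $\beta_n$ through the integral Bockstein $\wt\beta_n$. You then move the Bockstein onto $v_{2d}$ using the $\Q/\Z$ adjunction $\int_M v\smile\rho(t)=\mathrm{lk}(\wt\beta v,t)$ together with non-degeneracy of the linking form. This is the same ``Bockstein is self-adjoint'' phenomenon as the paper's Leibniz trick, packaged integrally rather than one finite level at a time. The paper's proof of Wall's theorem does it level by level.

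\textbf{What each route buys.} Your route yields the full integral statement directly. The generalized Steenrod squares the paper says the general case needs are absorbed into the general-$n$ case of Theorem \ref{thm: MAT form}, which you take as given. The paper's route is self-contained given only the $n=1$ case it actually proves. It also uses only finite-coefficient duality rather than non-degeneracy of the $\Q/\Z$-valued linking form.

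\textbf{Step 4 is easier than you expect.} Use the paper's convention $\langle x,x'\rangle_{\mrm{MAT}}=\int_M x\smile\wt\delta x'$, and write $\iota\colon\Z/2\hookrightarrow\Q/\Z$. Naturality of connecting maps gives $\wt\beta v=\wt\delta(\iota v)$. Then
\[
\mathrm{lk}(\wt\beta v,t)=\int_M\iota(v)\smile t=\iota\bigl(\textstyle\int_M v\smile\rho(t)\bigr),
\]
purely by naturality of the cup product in the coefficient pairing $\Z/2\otimes\Z\to\Z/2$. No symmetry argument or chain-level check is needed.

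\textbf{A small point in Step 3.} The classes $\ol{\beta_n u}$ only reach reductions of $2$-primary torsion. Odd torsion reduces to $0$ mod $2$, though, so your claim as stated is fine.
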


\begin{proof}[Proof sketch] Writing 
\[
\rH^{2d+1}(M; \Z)_{\tors} = \bigoplus_\ell \rH^{2d+1}(M; \Z)[\ell^\infty],
\]
it suffices to examine $\ell=2$, since we already showed in Proposition \ref{prop: skew-symmetric} that the form is skew-symmetric. For simplicity, we will content ourselves with proving that the pairing is alternating on $\rH^{2d+1}(M; \Z)[2]$ if and only if $v_{2d}$ lifts to $\rH^{2d}(M; \Z/4(d))$. The argument contains all the key features of the general case, but the general case requires the ``generalized Steenrod operations'' of \S \ref{sssec:generalized-Steenrod}, and the full power of Theorem \ref{thm: MAT form}, where we only proved the $n=1$ case earlier. 

By Proposition \ref{prop: compatibility}, it is equivalent to prove that the pairing $\langle -, - \rangle_1$ on $\rH^{2d}(M; \Z/2(d))$ is alternating if and only if $v_{2d}$ lifts to $\rH^{2d}(M; \Z/4(d))$. Using Theorem \ref{thm: MAT form}, we have for all $u \in \rH^{2d}(M; \Z/2(d))$ that
\[
u \smile \beta(u) = \Sq^{2d}(\beta (u)).
\]
We want to identify when this vanishes for all $u \in \rH^{2d}(M; \Z/2(d))$. By definition of $v_{2d}$, we have $\Sq^{2d} (\beta (u)) = v_{2d} \smile  \beta (u)$. So it is equivalent to identify when $0 = v_{2d} \smile \beta (u) $ for all $u$. Since $\beta$ is a derivation, we can rewrite
\[
v_{2d} \smile \beta (u) = \beta( v_{2d} \smile u) - \beta(v_{2d}) \smile u.
\]
As discussed in the proof of Proposition \ref{prop: skew-symmetric}, $\beta$ vanishes on $\rH^{4d}(M; \Z /2(2d))$, so $\beta( v_{2d} \smile u)  =0$. The question is then when $\beta(v_{2d}) = 0$. By the long exact sequence this is the obstruction to lifting $v_{2d}$ to $\rH^{2d}(M; \Z/4(d))$, as desired. 
\end{proof}

\begin{example}\label{ex:wu-manifold}
The hypothesis of Theorem \ref{thm:alternating-linking-form} does not always hold. For example, the ``Wu manifold'' $\mrm{SU}(3)/\mrm{SO}(3)$ is a counterexample for $d=1$. 
\end{example}

By itself, Theorem \ref{thm:alternating-linking-form} is not necessarily that useful without a way to check the condition on $v_{2d}$, or at least to relate $v_{2d}$ to other more familiar characteristic classes. This is the direction that we will explore next. 

\subsection{Connection to Wall's Theorem}\label{ssec:wall-5-manifold} For the special case of simply connected, closed, smooth 5-manifolds (which are cohomologically analogous to a surface over a finite field), Wall \cite{Wall62} had already identified the criterion for the alternation of the linking form. To formulate it, we define the map 
\begin{align*}
\rH_2(M; \Z)_{\tors} & \rightarrow \Z/2 \\
x & \mapsto w_2(x)
\end{align*}
as the composition 
\[
\rH_2(M; \Z)_{\tors} \cong \rH^3(M; \Z)_{\tors} \xrightarrow{\smile w_2} \rH^5(M; \Z/2) \cong \Z/2
\]
where $w_2 = w_2(TM) \in \rH^2(M; \Z/2)$ is the second Stiefel--Whitney class of the tangent bundle of $M$. In \cite[Propositions 1 and 2]{Wall62}, Wall proved the following result. 

\begin{thm}[Wall]
Let $M$ be a simply connected, closed, smooth 5-manifold. Then the linking form on $M$ is alternating if and only if $w_2(x) = 0$ for all $x \in \rH_2(M; \Z)_{\tors}$.
\end{thm}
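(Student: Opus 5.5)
We deduce Wall's criterion from Theorem \ref{thm:alternating-linking-form} with $d=1$, by identifying the Wu class $v_2$ with $w_2$ and then translating the lifting condition into Wall's pairing condition.

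\textbf{Step 1: identify $v_2$ with $w_2$.} For $d=1$, Theorem \ref{thm:alternating-linking-form} says the linking form on $\rH^3(M;\Z)_{\tors}$ is alternating if and only if $\wt\beta(v_2)=0$. Here $v_2\in \rH^2(M;\Z/2)$ is the class with $\Sq^2(x)=v_2\smile x$ for all $x\in \rH^3(M;\Z/2)$, which is the second Wu class. Wu's formula gives $w=\Sq(v)$, so $w_1=v_1$ and $w_2=v_2+v_1^2$. Since $M$ is simply connected, it is orientable and $v_1=w_1=0$. Hence $v_2=w_2$, and the criterion becomes $\wt\beta(w_2)=0$ in $\rH^3(M;\Z)$. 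Note that $\wt\beta(w_2)$ is automatically $2$-torsion.

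\textbf{Step 2: translate $\wt\beta(w_2)=0$ into Wall's condition.} We must show that $\wt\beta(w_2)=0$ if and only if $x\smile w_2=0$ for all $x\in \rH^3(M;\Z)_{\tors}$. The tool is the linking form $L$ on $\rH^3(M;\Z)_{\tors}$, which is non-degenerate by Poincaré duality (arithmetic duality in the topological setting). It suffices to prove the identity
\[
\int_M \ol{x}\smile w_2 \;=\; 2\,L\bigl(x,\wt\beta(w_2)\bigr)\in \tfrac12\Z/\Z\cong\Z/2 \quad\text{for all } x\in \rH^3(M;\Z)_{\tors},
\]
or equivalently $L(x,\wt\beta(w_2))=\tfrac12\int_M \ol{x}\smile w_2$, where $\ol{x}$ is the mod $2$ reduction. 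This is the standard compatibility between the Bockstein $\wt\beta\colon \rH^2(M;\Z/2)\to \rH^3(M;\Z)$ and the linking form. Concretely, $L(x,\wt\beta(a))$ is computed by taking the boundary $\wt\delta^{-1}$ of $\wt\beta(a)$, which is the image of $a$ under $\Z/2\hookrightarrow\Q/\Z$, and pairing it with $x$. This is the same diagram chase as in Proposition \ref{prop: compatibility} with $n=1$.

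Given the identity, non-degeneracy of $L$ shows that $\wt\beta(w_2)=0$ if and only if $x\smile w_2=0$ for all torsion $x$. Identifying $\rH^3(M;\Z)_{\tors}\cong \rH_2(M;\Z)_{\tors}$ by Poincaré duality turns this into exactly Wall's condition on the map $x\mapsto w_2(x)$.

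\textbf{Main obstacle.} The substantive step is verifying the compatibility identity in Step 2, with correct signs and the factor $\tfrac12$. I would check it at the cochain level: lift $w_2$ to an integral cochain $\wt w$ with $d\wt w=2c$, so that $c$ represents $\wt\beta(w_2)$. Then $\tfrac12\wt w$ is a $\Q$-cochain with coboundary $c$, so by the definition of the linking form $L(x,c)=\int x\smile\tfrac12\wt w \bmod \Z$, which is $\tfrac12\int\ol{x}\smile w_2$.

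The rest, namely Wu's formula and the vanishing of $w_1$, is classical. Simple connectivity is used only to get orientability, together with the hypotheses already required by Theorem \ref{thm:alternating-linking-form}.
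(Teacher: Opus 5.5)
Your proof is correct. Step 1 is the same as the paper's: Wu's formula and orientability give $v_2=w_2$, so by Theorem \ref{thm:alternating-linking-form} everything comes down to whether $\wt\beta(w_2)=0$. Step 2, however, takes a different route from the paper.

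The paper argues one $2$-power at a time. It writes a $2^n$-torsion class as $x=\beta_n(u)$ with $u\in\rH^2(M;\Z/2^n)$. It then uses the Leibniz rule for Bocksteins and the vanishing of the Bockstein into top degree (the trick from Proposition \ref{prop: skew-symmetric}) to move the Bockstein onto $w_2$, obtaining $\beta_n(u)\smile w_2=u\smile\beta_n(w_2)$. Perfectness of the mod $2^n$ Poincar\'e pairing gives the equivalence at level $n$, and the paper finally assembles the conclusions over all $n$ to get vanishing of the integral Bockstein.

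You instead use naturality of connecting maps, $\wt\beta=\wt\delta\circ\iota$ with $\iota$ induced by $\Z/2\hookrightarrow\Q/\Z$, to get $L(x,\wt\beta(w_2))=\tfrac12\int_M\ol{x}\smile w_2$. This says Wall's homomorphism $x\mapsto w_2(x)$ is exactly the linking-dual of the class $\wt\beta(w_2)$, and a single application of non-degeneracy of $L$ finishes.

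What each route buys:
\begin{itemize}
\item Your version is shorter and more conceptual. It treats all torsion at once (odd torsion has zero mod $2$ reduction, so nothing is lost), and it avoids the level-by-level Bocksteins of the mod $2$ class $w_2$ and the limiting step over $n$.
\item The paper's version uses only tools already set up in the text, namely the Leibniz rule and finite-coefficient Poincar\'e duality, and it parallels the proof of Theorem \ref{thm:alternating-linking-form}.
\item Your version instead relies on non-degeneracy of the $\Q/\Z$-valued linking form. This is standard but not proved in the paper's topological discussion, and it itself follows from those finite-coefficient dualities.
\end{itemize}
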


Wall's proof is specific to 5-manifolds, and uses the smooth structure of $M$ (indeed, $w_2$ is defined in terms of this smooth structure). It turns out that for an orientable smooth manifold, in particular a simply connected smooth manifold, the ``Wu class'' $v_2$ from Theorem \ref{thm:alternating-linking-form} agrees with $w_2$. This is a special case of a much more general statement called ``Wu's formula'', which we discuss shortly below in \S \ref{ssec:wu-formula}. Assuming it for now, we will deduce Wall's Theorem from Theorem \ref{thm:alternating-linking-form}. 

\begin{proof}
It suffices to show that $x \smile w_2 = 0$ for all $x \in \rH^3(M; \Z)[2^\infty] \cong \rH_2(M; \Z)[2^\infty]$ if and only if $w_2$ lifts to $\rH^2(M; \Z)$. If $x \in \rH^3(M; \Z)$ is killed by $2^n$, then $x = \beta_n(u)$ for some $u \in \rH^2(M; \Z/2^n)$, where $\beta_n$ is the Bockstein connecting map for the short exact sequence of sheaves
\[
0 \rightarrow \Z \xrightarrow{2^n} \Z \rightarrow \Z/2^n \rightarrow 0.
\]
By the same argument as in the proof of Proposition \ref{prop: skew-symmetric}, we have 
\begin{equation}\label{eq:wall-pairing}
\beta_n(u) \smile w_2 = \beta_n(u \smile w_2) +  u  \smile \beta_n(w_2) = u \smile \beta_n(w_2).
\end{equation}
Hence if $\beta_n(w_2)$ vanishes, then \eqref{eq:wall-pairing} is clearly zero. Conversely, since the pairing between $\rH^2(M; \Z/2^n)$ and $\rH^3(M; \Z/2^n)$ is perfect, if \eqref{eq:wall-pairing} vanishes for all $u$ then $\beta_n(w_2) = 0$. If this vanishes for all $n$, then $w_2$ even vanishes under the Bockstein to $\rH^3(M; \Z)$ coming from the short exact sequence $\Z \rightarrow \Z \xrightarrow{2} \Z/2$, hence lifts to $\rH^2(M; \Z)$. 
\end{proof}

\subsection{Wu's formula}\label{ssec:wu-formula}
We can generalize the construction of the $v_{2d}$ from Theorem \ref{thm:alternating-linking-form}. Let $M$ be a closed, connected topological manifold of dimension $d$. By Poincar\'{e} duality, the operation
\[
\Sq^i \co \rH^{d-i}(M; \Z/2\Z) \rightarrow \rH^{d}(M; \Z/2\Z) \cong \Z/2\Z
\]
must be given by cupping with a (unique class)  $v_i \in \rH^i(M; \Z/2\Z)$. Wu \cite{Wu50} proved the following description of the $v_i$. 

\begin{thm}[Wu's formula]\label{thm:wu}
Let $M$ be a closed smooth manifold. Write 
\begin{align*}
\Sq& := \Sq^0 + \Sq^1 + \Sq^2 + \ldots \\
v &:= v_0 + v_1 + v_2 + \ldots \\
w &:= w_0 + w_1+ w_2 + \ldots
\end{align*}
where the $w_i$ are the Stiefel-Whitney classes of $TM$. Then we have $\Sq(v) = w$. 
\end{thm}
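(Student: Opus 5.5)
The plan is the classical Thom-space argument. The Stiefel--Whitney classes are defined by $w = \Phi^{-1}\Sq(\Phi(1))$, where $\Phi$ is the Thom isomorphism of the normal bundle of the diagonal, and the Wu classes are defined by duality. So we would compute the class of the diagonal $\Delta \subset M \times M$ in two ways and compare.

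First we fix notation. Let $D \in \rH^d(M\times M;\Z/2)$ be the Poincar\'e dual of the diagonal, with $d=\dim M$. Choose a basis $\{a_i\}$ of $\rH^*(M;\Z/2)$ and a dual basis $\{b_i\}$ with $\int_M a_i \smile b_j = \delta_{ij}$. The standard K\"unneth computation gives $D = \sum_i a_i \times b_i$. It also gives the two key properties $(x\times 1)\smile D = (1\times x)\smile D$ and $\Delta^*D = e(TM) = w_d(TM)$, the mod $2$ Euler class.

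Second, we identify $D$ geometrically. The normal bundle of $\Delta$ is $TM$. Thus $D$ is the image of the Thom class $U \in \rH^d(\mathrm{Th}(TM))$ under the collapse map $M\times M \to \mathrm{Th}(TM)$, via a tubular neighborhood. Steenrod operations are natural, and $\Sq U = \Phi(w)$ by the Thom definition of Stiefel--Whitney classes. It follows that $\Sq D$ corresponds to $(w\times 1)\smile D$. Pushing forward along the first projection, with $\pi_{1*}((y\times 1)\smile D)=y$, we get $\pi_{1*}(\Sq D) = w$.

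Third, we compute $\pi_{1*}(\Sq D)$ from the K\"unneth formula. By the Cartan formula, $\Sq D = \sum_i \Sq a_i \times \Sq b_i$, so
\[
\pi_{1*}\Sq D = \sum_i \Big(\int_M \Sq b_i\Big)\, \Sq a_i .
\]
By definition of the Wu class, $\int_M \Sq b_i = \int_M v\smile b_i$. Expanding $v = \sum_i (\int_M v\smile b_i)\, a_i$ in the basis then gives $\pi_{1*}\Sq D = \Sq\big(\sum_i (\int_M v \smile b_i) a_i\big) = \Sq(v)$. Comparing with the second step gives $\Sq(v)=w$.

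The main obstacle is the second step: the precise compatibility of $\Sq$ with the Thom isomorphism and the collapse map, and the push-forward identity $\pi_{1*}((y\times1)\smile D)=y$. I would handle both by working in $\rH^*(M\times M, M\times M\setminus\Delta)\cong \rH^*(\mathrm{Th}(TM))$ via excision, where everything is natural. This also requires taking the definition $w=\Phi^{-1}\Sq\Phi(1)$ as given, or else citing it as a standard theorem. The remaining steps are bookkeeping with dual bases.
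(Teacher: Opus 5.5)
Your proposal is correct and follows essentially the same route as the paper: you identify $\Sq[\Delta]$ with $\pr_1^*(w)\smile[\Delta]$ by excising to the Thom space of $TM \cong$ the normal bundle of the diagonal, and push forward to get $w$. You then compute $\pr_{1*}\Sq[\Delta]$ a second way, using the K\"unneth decomposition $[\Delta]=\sum_i a_i\times b_i$, the Cartan formula, and the defining property of $v$, which yields $\Sq(v)$. The extra facts you record, namely $(x\times 1)\smile D=(1\times x)\smile D$ and $\Delta^*D=w_d$, are true but not needed.
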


\begin{example}\label{ex:wu-formula}
We can unpack Theorem \ref{thm:wu} as
\[
(1+\Sq^1 + \Sq^2  + \ldots) (1+v_1 + v_2 + \ldots ) = 1+ w_1 + w_2 + \ldots
\]
and then match terms by cohomological degree on each side to solve for $v_i$ for $w_{\leq i}$. For example, using that $\Sq^i$ vanishes on $\rH^{<i}$, we find that $v_1 = w_1$ and $v_2 + \Sq^1(v_1)  = w_2 $. Since $\Sq^i$ agrees with the squaring map on $\rH^i$, this simplifies to $v_2 = w_2 + w_1^2$. If $M$ is orientable, then $w_1 = 0$ and we find that $v_2 = w_2$, as claimed in \S \ref{ssec:wall-5-manifold}.
 \end{example}

\begin{remark}\label{rem:spin}
Suppose $M$ is a closed, orientable, connected smooth manifold of dimension $5$. Putting Theorem \ref{thm:alternating-linking-form} together with Example \ref{ex:wu-formula}, we see that the linking form on $M$ is alternating if and only if $w_2$ lifts to $\rH^2(M; \Z)$. This latter condition is referred to as $M$ being ``spin$^c$''. 
\end{remark} 

To prepare for the proof of Theorem \ref{thm:wu}, we need to recall a perspective on Stiefel--Whitney classes introduced by Thom in \cite{Thom51}. Let $M$ be a topological manifold and $E$ be a vector bundle of rank $r$ over $M$. Let $i \colon M \hookrightarrow E$ denote the inclusion of $M$ as the zero section of $E$ and $\pi \co E \rightarrow M$ be the projection. Let $E_0 = E - i(M)$. According to the Thom isomorphism, $\pi^*$ makes $\rH^{i+r} (E, E_0; \Z/2)$ into a free $\rH^*(M; \Z/2)$-module of rank 1, with generator the cycle class of the zero-section, which we denote $[M] \in \rH^{r}(E, E_0; \Z/2)$. Then $w_i \in \rH^i(M;\Z/2)$ is the unique class such that 
\[
\Sq^i ([M]) = \pi^*(w_i) \smile [M] \in \rH^{i+r} (E, E_0; \Z/2).
\]
This characterization of $w_i$ can be taken as a definition; see \cite[\S 8]{MS74} for the theory developed in this way. 

With this perspective on the Stiefel--Whitney classes, the proof of Wu's formula is essentially pure linear algebra. 

\begin{proof}[Proof of Theorem \ref{thm:wu}] In this proof, all cohomology is taken with $\Z/2$ coefficients, which we omit from the notation. For a closed subspace $A \inj X$, we abbreviate $\rH^*_A(X; -)$ for the relative cohomology $\rH^*(X, X \setminus A; -)$. 

We note that the normal bundle to the diagonal embedding $M \inj M \times M$ is isomorphic to the tangent bundle of $M$. By excision, we therefore have 
\begin{equation}\label{eq:excision}
\rH^*_M(M \times M) \cong \rH^*_M(TM),
\end{equation}
and both sides are freely generated by the cycle class $[M]$ that is Poincar\'e dual to the homological fundamental class of $M$, embedded as the diagonal in $M \times M$ and as the zero section in $TM$. This gives a commutative diagram, 
\[
\begin{tikzcd}
\rH^*_M(TM) \ar[r, "\sim", "\eqref{eq:excision}"'] \ar[d, "\pi_*"', bend right] & \rH^*_M(M \times M) \ar[d, "\pr_{1*}"', bend right] \ar[r]   & \rH^*(M \times M) \ar[d, "\pr_{1*}"', bend right]  \\
\rH^*(M) \ar[u, "\pi^*"', bend right]  \ar[r, equals] & \rH^*(M) \ar[u, "\pr_1^*"', bend right]  \ar[r, equals] & \rH^*(M) \ar[u, "\pr_1^*"', bend right] 
\end{tikzcd}
\]
The isomorphism \eqref{eq:excision} is equivariant for the Steenrod operations. Since by definition $\Sq^i([M]) = \pi^* (w_i) \smile [M] \in \rH^*_M(TM)$, we also have $\Sq^i([M]) = \pr_1^*(w_i) \smile [M] \in \rH^*_M(M \times M)$. Letting $\pr_{1*} \co \rH^*(M \times M) \rightarrow \rH^*(M)$ be the pushforward map adjoint to $\pr_1^*$ under Poincar\'e duality, this implies that
\begin{equation}\label{eq:Wu-LHS}
w = \pr_{1*} (\Sq ([M])).
\end{equation}
We will proceed to calculate the right side in a different way. 

The K\"unneth formula identifies 
\begin{equation}\label{eq:Kunneth}
\rH^*(M \times M) \cong \rH^*(M) \otimes \rH^*(M).
\end{equation}
Concretely, if we pick a basis $\{e_i\}$ for $\rH^*(M)$ and let $\{f_i\}$ be the dual basis under Poincar\'e duality, then the identification \eqref{eq:Kunneth} carries the cohomology class $[\Delta] \in \rH^*(M \times M)$ to $\sum_i e_i \otimes f_i$. (Recall that we are working with $\Z/2\Z$ coefficients; if we were working with integral coefficients, then we would have to mind certain signs here.) 

The \emph{Cartan formula} tells us how to compute $\Sq$ of a cup product, and says that 
\[
\Sq(\pr_1^*(e) \smile \pr_2^*(f)) = \pr_1^* \Sq(e) \smile \pr_2^* \Sq(f). 
\]
We use this to calculate $\pr_{1*} (\Sq ([\Delta]))$:
\begin{align*}
\pr_{1*} (\Sq ([\Delta])) &= \pr_{1*} \left( \sum_i \pr_1^*(\Sq (e_i)) \smile \pr_2^*(\Sq (f_i)) \right) \\
&= \sum_i \Sq (e_i)  \int_M \Sq (f_i ) \\
&= \sum_i \Sq (e_i)  \int_M (v f_i).
\end{align*}
Since $\{e_i\}$ is a $\Z/2$-basis of $\rH^*(M)$, we may write $v = \sum_i \epsilon_i e_i$ for unique $\epsilon_i \in \Z/2$. Since $\{f_i\}$ is the dual basis to $\{e_i\}$, we have $\int_M (vf_i) = \epsilon_i$, giving 
\[
\pr_{1*} (\Sq ([\Delta]))  = \sum_i \Sq (e_i) \otimes \int_M (v f_i) = \sum_i \epsilon_i \Sq(e_i) = \Sq (v).
\]
Equating this with \eqref{eq:Wu-LHS} completes the proof.
\end{proof}

\section{Tate's Symplecticity Conjecture for $\ell \neq p$}\label{sec:ell-adic}

Building on the discussion of the preceding sections, we will sketch the proof of Theorem \ref{thm:symplecticityneqp}. At this point the ratio of material covered to space will increase dramatically compared to previous sections, so our discussion will abruptly become more informal and impressionistic. The metaphor used in the talk, on which this article is based, was that if the previous sections were at the resolution of a walking tour, then this section will be at the resolution of the view from a train window. 

\subsection{Fast-forwarding to new features}
We will quickly fast-forward through the aspects of the proof of Theorem \ref{thm:symplecticityneqp} which are shared by the topological case, and have already been discussed in \S \ref{sec:linking-form}. 

Let $X$ be a smooth, proper, geometrically connected variety of dimension $2d$ over a finite field $k$ of characteristic $p$. Assume that $p \neq 2$. Then we have a $4d+1$ dimensional Poincar\'e duality for $\rH^*(X)$ as in \S \ref{sssec:arithmetic-cohomology}. As in \S \ref{ssec:linking-form-alternation}, the $\EE_\infty$ Steenrod operation
\[
\Sq^{2d} \co \rH^{2d+1}(X; \Z/2(d)) \rightarrow \rH^{4d+1}(X; \Z/2(2d))
\]
is represented by a unique class $v_{2d} \in \rH^{2d}(X; \Z/2(d))$. By arguments which are formally similar to the proof of Theorem \ref{thm:alternating-linking-form}, one shows (cf. \cite[\S 7]{Feng20}):

\begin{thm}\label{thm:variety-spin-c} Let $X$ be a smooth, proper, geometrically connected variety of dimension $2d$ over a finite field $k$ of characteristic $p$. Then the Artin--Tate pairing on $\rH^{2d+1}(X; \Z_2(d))_{\tors}$ is alternating if and only if $v_{2d}$ lifts to $\rH^{2d}(X; \Z_2(d))$.
\end{thm}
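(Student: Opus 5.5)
The plan is to transplant the proof of Theorem \ref{thm:alternating-linking-form} to the $2$-adic étale setting. The only inputs we need are $(4d+1)$-dimensional arithmetic Poincaré duality (\S\ref{sssec:arithmetic-cohomology}), which gives perfect pairings with $\Z/2^n$ coefficients, the derivation property of Bocksteins, and Theorem \ref{thm: MAT form} with the generalized Steenrod squares. By Proposition \ref{prop: skew-symmetric} only the $2$-primary part matters. Also $\rH^{2d+1}(X;\Z_2(d))_{\tors} = \bigcup_n \rH^{2d+1}(X;\Z_2(d))[2^n]$. So it suffices to show, for each $n$, that $\langle-,-\rangle_{\mrm{MAT}}$ restricted to the $2^n$-torsion is alternating if and only if $\beta_n(v_{2d})=0$. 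Here we abuse notation and also write $\beta_n$ for the Bockstein $\rH^{2d}(X;\Z/2(d))\to\rH^{2d+1}(X;\Z/2^n(d))$ attached to $0\to\Z/2^n\to\Z/2^{n+1}\to\Z/2\to0$, i.e.\ the obstruction to lifting $v_{2d}$ to $\Z/2^{n+1}$ coefficients.

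First, by Proposition \ref{prop: compatibility} and surjectivity of \eqref{eq:torsion-bockstein}, alternation on $[2^n]$ is equivalent to $u\smile\beta_n u=0$ for all $u\in\rH^{2d}(X;\Z/2^n(d))$. Theorem \ref{thm: MAT form} rewrites this as $[2^{n-1}]\Sq^{2d}(\ol{\beta_n u})=0$. Since $[2^{n-1}]\colon \rH^{4d+1}(X;\Z/2)\to\rH^{4d+1}(X;\Z/2^n)$ is injective (trace compatibility with $\Z/2\hookrightarrow\Z/2^n$), the condition becomes $v_{2d}\smile\ol{\beta_n u}=0$ in $\rH^{4d+1}(X;\Z/2(2d))$ for all $u$.

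Next, move the Bockstein across the cup product. The mod $2$ reduction $\ol{\beta_n u}$ equals the Bockstein $\beta'$ of $u$ for the sequence $\Z/2\to\Z/2^{n+1}\to\Z/2^n$. I would use the compatibility $v\smile\beta'(u)=\pm\beta'(v\smile u)\pm \beta_n(v)\smile u$, where the second cup is the pairing $\Z/2^n\times\Z/2^n\to\Z/2^n$ followed by the canonical identification in top degree. This identity should be checked at the level of coefficient complexes, via cochains over $\Z/2^{n+1}$, exactly as for the $n=1$ derivation property. As in Proposition \ref{prop: skew-symmetric}, $\beta'$ vanishes on $\rH^{4d}$ because $\rH^{4d+1}(X;\Z/2)\to\rH^{4d+1}(X;\Z/2^{n+1})$ is injective. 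Hence the condition reads $\int_X u\smile\beta_n(v_{2d})=0$ for all $u\in\rH^{2d}(X;\Z/2^n(d))$. By perfectness of the $\Z/2^n$ Poincaré pairing between degrees $2d$ and $2d+1$, this holds if and only if $\beta_n(v_{2d})=0$, i.e.\ $v_{2d}$ lifts to $\Z/2^{n+1}$.

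Finally, assemble over $n$. We need that $v_{2d}$ lifts to $\Z/2^{n+1}$ for all $n$ if and only if it lifts to $\rH^{2d}(X;\Z_2(d))$, i.e.\ the $\Z_2$-Bockstein $\wt\beta(v_{2d})\in\rH^{2d+1}(X;\Z_2(d))[2]$ vanishes. The cohomology groups of $X$ with $\Z/2^m$ coefficients are finite, so Mittag-Leffler holds and $\rH^{2d}(X;\Z_2(d))=\varprojlim_m \rH^{2d}(X;\Z/2^m(d))$. Compatible lifts can then be chosen by a compactness argument on finite sets. Alternatively, one can note directly that $\wt\beta(v_{2d})$ is $2^n$-divisible torsion in a finitely generated $\Z_2$-module for all $n$, hence zero.

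The main obstacle is the middle step. It requires the mixed-coefficient derivation identity, with the right Bocksteins and signs (signs are harmless mod $2$, but the identification of $\ol{\beta_n u}$ with $\beta'(u)$ needs care). It also needs the full strength of Theorem \ref{thm: MAT form} for general $n$. Everything else is formal from duality.
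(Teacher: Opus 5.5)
Your proposal is correct and takes the same route as the paper, which obtains this theorem by transplanting the proof of Theorem \ref{thm:alternating-linking-form}: Proposition \ref{prop: compatibility}, then Theorem \ref{thm: MAT form}, then the defining property of $v_{2d}$, then moving the Bockstein across the cup product and using that Bocksteins into top degree vanish, then perfect duality; you have in addition written out the general-$n$ case that the paper's sketch omits, together with a valid passage from lifts to every $\Z/2^{n+1}$ to a lift to $\Z_2$. The one step to tidy is your mixed derivation identity: on cochains the total-derivative term is the Bockstein for $0\to\Z/2^n\to\Z/2^{n+1}\to\Z/2\to 0$ applied to the mod-$2$ class $v_{2d}\smile\overline{u}$ (with $\overline{u}$ the reduction of $u$), namely $\beta_n(v_{2d}\smile\overline{u})=\beta_n(v_{2d})\smile u+[2^{n-1}]\bigl(v_{2d}\smile\beta'(u)\bigr)$ in $\rH^{4d+1}(X;\Z/2^n(2d))$, and not $\beta'$ of a $\Z/2^n$-class; this term vanishes by the same top-degree injectivity, so your argument goes through unchanged.
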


Recall from Example \ref{ex:wu-manifold} that for topological 5-manifolds (corresponding to $d=1$), it was not necessarily the case that $v_{2d}$ lifts to $\rH^{2}(X; \Z_2(1))$. Also, despite longstanding belief to the contrary, Poonen--Stoll \cite{PS99} showed that the Cassels--Tate pairing need not be alternating for a Jacobian variety. Thus, the fact that the Artin--Tate pairing \emph{is} always alternating reflects something special about this setting, which is not witnessed in other closely analogous mathematical contexts. 

\begin{thm}\label{thm:wu-lifts}
In the situation of Theorem \ref{thm:variety-spin-c}, $v_{2d}$ always lifts to $\rH^{2d}(X; \Z_2(d))$.
\end{thm}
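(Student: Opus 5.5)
The plan is to imitate the proof of Wu's formula (Theorem \ref{thm:wu}) in the arithmetic setting, replacing Stiefel--Whitney classes by characteristic classes of the tangent bundle of $X$ that are visibly integral. First, I would establish an arithmetic Wu formula. The diagonal $\Delta \colon X \inj X \times_k X$ has normal bundle $T_X$, so by purity (the Gysin isomorphism, i.e.\ the algebro-geometric substitute for excision plus the Thom isomorphism) $\rH^*_X(X\times X) \cong \rH^*_X(T_X)$. I would run the same linear-algebra argument with the K\"unneth decomposition of $[\Delta]$ and the Cartan formula, now for $4d+1$-dimensional absolute duality. There is one subtlety: the K\"unneth formula for $X \times_k X$ over $\F_q$ is not the naive tensor product of absolute cohomologies. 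I would therefore work geometrically over $\ol{k}$, where $X_{\ol k}$ has $4d$-dimensional duality, and transport the resulting identity through Hochschild--Serre. This yields $\Sq(v) = w$, where $w_i \in \rH^i(X;\Z/2(\ast))$ are the \'etale Stiefel--Whitney classes, defined by $\Sq^i[X] = \pi^*w_i \smile [X]$ in $\rH^*_X(T_X)$.

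Second, I would compute these $w_i$ for an algebraic vector bundle. For a line bundle $L$ the Thom class is $c_1(L)$ mod $2$, which lives in degree $2$, so $\Sq(c_1) = c_1 + c_1^2$. By the splitting principle, the total Stiefel--Whitney class of an algebraic bundle $E$ is $w(E) = \sum_i \ol{c_i(E)}$, where $w_{2i}$ is the mod $2$ reduction of the Chern class $c_i(E) \in \rH^{2i}(X;\Z_2(i))$ and the odd classes vanish. This is the analogue of the complex-manifold statement that $w_{2i} = c_i \bmod 2$. A technical point is checking that the $\EE_\infty$ Steenrod operations (built via \'etale homotopy) satisfy Cartan and the normalization on $\rH^2(\mathbb{P}^\infty)$. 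I would quote these from \cite{Feng20}.

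Third, I would solve $\Sq(v)=w$ for $v$. As in Example \ref{ex:wu-formula}, $v$ is given by the universal polynomial $v = \chi(\Sq)(w)$ in the $w_i$. The key point is to show that applying iterated Steenrod operations to mod $2$ reductions of Chern classes again gives mod $2$ reductions of integral (polynomial-in-Chern) classes. By the splitting principle, $\Sq$ acts on symmetric polynomials in the roots $x_j$ by $x_j \mapsto x_j + x_j^2$, so $\chi(\Sq)$ sends $x_j \mapsto x_j + x_j^2 + \ldots$, which is again a polynomial in the roots with integer coefficients. Hence $v_{2d}$ is the reduction of an explicit integral polynomial in $c_1(T_X),\dots,c_d(T_X)$, which lies in $\rH^{2d}(X;\Z_2(d))$.

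I expect the main obstacle to be the first step, the arithmetic Wu formula. The issue is that $v_{2d}$ is defined via the \emph{absolute} $4d+1$-dimensional duality, while the clean K\"unneth/diagonal argument lives geometrically. I would try to show that the Wu class for absolute duality coincides with the pullback of the geometric Wu class on $X_{\ol k}$, using that $\int_X$ factors through the degree-one generator of $\rH^1(\Spec k)$, and that Steenrod operations are trivial on that generator in the relevant degrees. I would also check that the integral lift constructed in the third step descends from $\ol k$ to $k$; this is automatic because Chern classes are defined over $k$.
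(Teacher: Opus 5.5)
Your Steps 2--3 follow the paper's route: Stiefel--Whitney classes via Thom classes, the splitting principle, and reduction to Chern classes. Step 1, however, has a genuine gap, and it is exactly where the content of the theorem lies.

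\textbf{Step 1.} There is no map $\rH^*(X_{\ol k};\Z/2)\to\rH^*(X;\Z/2)$, so ``the pullback of the geometric Wu class'' is not defined. What your argument can actually establish is only $v_{2d}|_{X_{\ol k}}=\ol v_{2d}$. Write $G=\Gal(\ol k/k)$ and let $\delta\colon\rH^{2d}(X_{\ol k};\Z/2)_G\to\rH^{2d+1}(X;\Z/2)$ be the Hochschild--Serre map. The mechanism you suggest (factoring $\int_X$ through $\theta\in\rH^1(\Spec k;\Z/2)$ and using $\Sq(\theta)=\theta$) handles test classes $x=\theta\smile y$, or more generally $x=\delta(y)$, for which $\Sq^{2d}\delta(y)=\delta(\Sq^{2d}y)$. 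But pairing against $\mathrm{im}\,\delta$ only detects the restriction of $v_{2d}$ to $X_{\ol k}$. So $v_{2d}$ remains undetermined modulo $\delta\bigl(\rH^{2d-1}(X_{\ol k};\Z/2)_G\bigr)$, and this ambiguity is precisely what matters. Such classes need not lift to $\rH^{2d}(X;\Z_2(d))$, and a summand $\delta(z)$ of $v_{2d}$ changes $\langle u,u\rangle_1=\int_X v_{2d}\smile\beta u$ by $\int_{X_{\ol k}}z\smile\beta(\ol u)$.

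Pinning this down requires computing $\Sq^{2d}(x)$ for $x\in\rH^{2d+1}(X;\Z/2)$ with nonzero image in $\rH^{2d+1}(X_{\ol k};\Z/2)^G$. Then $\Sq^{2d}(x)$ restricts to zero on $X_{\ol k}$, so neither the geometric Wu formula nor the Galois action on $\rH^*(X_{\ol k})$ controls it.

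The paper does not pass through $\ol k$. It proves the arithmetic Wu formula (Theorem \ref{thm: etale Wu}) directly in absolute cohomology. The missing K\"unneth isomorphism is replaced by the map $\varphi_*\colon\rH^*(X\times_kX)\to\rH^*(X)\otimes\rH^*(X)$, obtained by dualizing the exterior product. The crux is that $\varphi_*$ commutes with Steenrod operations (Proposition \ref{prop:varphi-equivariant}), which \cite{Feng20} proves by approximating via \'etale homotopy theory. After that, Wu's argument runs verbatim using $\varphi_*[\Delta]=\sum_i e_i\otimes f_i$. Read charitably as ``$v$ is the Chern-polynomial lift of the geometric Wu class,'' your target is essentially a restatement of this arithmetic Wu formula, not a way around it.

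\textbf{Step 2.} Separately, Step 2 is incorrect for the operations you chose. For the $\EE_\infty$ operations of \cite{Feng20}, $\Sq^1$ is the Bockstein of the untwisted sequence $0\to\Z/2\to\Z/4\to\Z/2\to0$. The Thom class of a line bundle, however, lifts to $\Z_2(1)$, not to $\Z_2$. Comparing the extensions $\Z/4$ and $\mu_4$ gives $\Sq^1([X])=\kappa\smile[X]$, where $\kappa\in\rH^1(\Spec k;\Z/2)$ is the Kummer class of $-1$; it is nonzero when $q\equiv 3\pmod 4$. Thus $w(L)=1+\kappa+\ol{c_1(L)}$, and the odd classes do not vanish; this is the Kummer-class term in \cite[Theorem 5.10]{Feng20}.

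Your Step 3 survives this correction. Since $\kappa^2=0$, one gets $\Sq^{-1}(1+\kappa+x_j)=(1+\kappa)(1+x_j+x_j^2+x_j^4+\cdots)$. Hence $v=(1+\kappa)^{2d}\prod_j(1+x_j+x_j^2+x_j^4+\cdots)=\prod_j(1+x_j+x_j^2+x_j^4+\cdots)$, and $v_{2d}$ is still the reduction of an integral polynomial in the Chern classes of $T_X$. This conclusion holds only once the absolute identity $\Sq(v)=w$ is available, which your proposal does not supply.
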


Theorem \ref{thm:wu-lifts} is implicitly in \cite[\S 7]{Feng20}. We will explain the ideas of the proof over the rest of the section. First we give a heuristic explanation, which is not a proof but rather a \emph{worldview} via metaphor to topology, in the spirit of \S \ref{ssec:QR}. We focus on the case of a surface $X/k$ ($d=1$). While the topological analogue of quadratic reciprocity---namely the symmetry of the linking number---was geometrically obvious, the topological analogue of symplecticity is not obvious in this case. It is given by Remark \ref{rem:spin}: a 5-manifold has symplectic linking form if and only if it is spin$^c$. In these terms, we can think of Theorem \ref{thm:symplecticityneqp} as asserting that a surface $X/k$ is analogous to a special class within all 5-manifolds: it looks like it is spin$^c$. Why might this be so? 

Referring back to the discussion of \S \ref{sssec:arithmetic-cohomology}, we see that a careful accounting yields the following more precise analogy. Intuitively, we can think of $X \rightarrow \Spec k$ as being analogous to a topological fibration $M \rightarrow S^1$, with the fiber $X_{\ol k}$ behaving like a $4$-manifold $F^{4}$. 
\begin{equation}\label{diag:fibration-analogy}
\begin{tikzcd}
X_{\overline{k}} \ar[rrr, dashed, bend left] \ar[r]  & X \ar[d] \ar[rrr, dashed, bend left] &  & F^{4}  \ar[r] & M^5 \ar[d]  \ar[lll, dashed, bend right] \\ 
 & \Spec k \ar[rrr, dashed, bend left] &  & & S^1 \ar[lll, dashed, bend right] 
\end{tikzcd}
\end{equation}
This perspective makes it clear that $X$ should behave like a 5-manifold fibered over $S^1$. Moreover---using the principle that a $d$-dimensional variety over a separably closed field behaves (at least when examining algebraic variants which are $\ell$-adic for $\ell \neq p$) like a $d$-dimensional complex variety---the fibers should be considered analogous to complex surfaces. Since the tangent bundle of $S^1$ is trivial, this suggests that the characteristic classes of $TM$ should behave like the characteristic classes of a complex manifold. In particular, since complex manifolds have a canonical spin$^c$ structure, given by the first Chern class $c_1$, this suggests that such an $M$ should also be spin$^c$.

\subsection{Arithmetic Wu formula for $p \neq 2$}
The first step towards substantiating the heuristic just explained, for proving Theorem \ref{thm:variety-spin-c}, is to calculate $v_{2d}$ in terms of Stiefel--Whitney classes. In \S \ref{sec:linking-form} we pursued this using ``Wu's formula''. More generally, we can define $v_i$ for any smooth, proper, geometrically connected $X$, as the cohomology class (whose uniqueness and existence are guaranteed by Poincar\'e duality) representing the Steenrod operation $\Sq^i \co \rH^{4d+1-i}(X; \Z/2) \rightarrow \rH^{4d+1}(X; \Z/2)$. 

We also need to define Stiefel--Whitney classes in the algebraic case. The idea for this is to take Thom's characterization of Stiefel--Whitney classes, explained in \S \ref{ssec:wu-formula}, as the definition. Let $\pi \co E \rightarrow X$ be a vector bundle of rank $r$. Then the relative cohomology $\rH^*_X(E; \Z/2)$ is free of rank 1 over $\rH^*(X; \Z/2)$ by purity, with generator the cycle class $[X] \in \rH^{2r}_X(E; \Z/2)$ of the zero section. We can again define $w_i(E) \in \rH^i(X; \Z/2)$ to be the unique class such that $\Sq^i([X]) = \pi^*(w_i) \smile [X]$. 

\begin{thm}[Arithmetic Wu formula, {\cite[Theorem 6.5]{Feng20}}]\label{thm: etale Wu}
Let $X$ be a smooth, proper, geometrically connected variety over $k$. Then we have $\Sq(v) = w$. 
\end{thm}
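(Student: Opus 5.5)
The plan is to transport the topological proof of Theorem \ref{thm:wu} verbatim into the étale setting, with $\Z/2$ coefficients and $p \neq 2$. The inputs are the $\EE_\infty$ Steenrod operations on $\rH^*(X;\Z/2)$, purity, the Künneth formula, and the absolute Poincaré duality of \S\ref{sssec:arithmetic-cohomology} (of dimension $2\dim X + 1$). Write $n = \dim X$.

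The first step is the Thom-class computation at the diagonal. I consider the diagonal $\Delta \colon X \hookrightarrow X \times_k X$, whose normal bundle is $TX$. By deformation to the normal cone, or étale-local purity, there is an isomorphism
\[
\rH^*_X(X \times_k X; \Z/2) \cong \rH^*_X(TX; \Z/2).
\]
This isomorphism carries the cycle class of the diagonal to the cycle class of the zero section, and it commutes with Steenrod operations because it is induced by maps of spaces, namely the specialization to the normal cone. Hence $\Sq^i[\Delta] = \pr_1^*(w_i) \smile [\Delta]$ in supported cohomology. I then push forward along $\pr_1$. This needs a Gysin map $\pr_{1*}$ for the smooth proper morphism $\pr_1 \colon X\times_k X \to X$, adjoint to $\pr_1^*$ under absolute duality. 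It also needs the projection formula and $\pr_{1*}[\Delta] = 1$. Together these give $w = \pr_{1*}\Sq[\Delta]$.

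The second step computes the same quantity differently. Because duality here has dimension $2n+1$ rather than $2n$, the relative Künneth argument over $k$ must be handled with care. I would apply $\pr_{1*}$ on the product over $k$ and use the geometric Künneth decomposition of $[\Delta]$ over $\ol k$, which is Galois-invariant, together with relative duality for $\pr_1$, which has relative dimension $2n$. This yields $\pr_{1*}(\Sq[\Delta]) = \sum_i \Sq(e_i)\, \pr_{1*}(\ldots)$, and the coefficient of each $\Sq(e_i)$ should be an integral against the class representing $\Sq$ on the complementary factor. To avoid decomposing $[\Delta]$ over $k$, I would argue intrinsically instead. For any $x \in \rH^*(X)$, apply the projection formula to
\[
\int_X x \smile \pr_{1*}\Sq[\Delta] = \int_{X\times X} \pr_1^*x \smile \Sq[\Delta].
\]
Then use the Cartan formula and the characterization $\int_X \Sq(y) = \int_X v\smile y$. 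This reduces the identity to the equality $\int_X \Sq(v)\smile x = \int_X w \smile x$ for all $x$, and nondegeneracy of the pairing finishes the proof. The delicate point in this step is that $\int_X$ lives in degree $2n+1$ with the extra circle direction. The Steenrod operations must be checked to be compatible with the trace map $\rH^{2n+1}(X) \to \rH^1(\Spec k) \cong \Z/2$. On $\Spec k$, $\Sq^i$ acts trivially on $\rH^1$ for $i>0$ (it is $\Sq^0 = \mathrm{id}$ there), so the Wu class of the base point is $1$; this matches the analogy with $S^1$.

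I expect the main obstacle to be the compatibility of Steenrod operations with Gysin pushforward and with the purity isomorphism in étale cohomology. These are the facts that are formal in topology, via Thom spaces, but need construction in the étale setting. I would first try to import them via étale homotopy theory, as the paper indicates \cite{Feng20} does. Along $X\times X \to X$, the Thom space of the normal bundle is identified étale-homotopically with the topological one, and étale Steenrod operations are pulled back from topology. Failing that, I would use a deformation to the normal cone argument, which is valid since $2$ is invertible.
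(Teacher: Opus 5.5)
Your Step 1 is fine and matches the paper: purity at the diagonal gives $w=\pr_{1*}\Sq([\Delta])$. The gap is in Step 2, which is exactly where the paper says the difficulty lies.

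\textbf{The geometric K\"unneth route fails.} The K\"unneth components $e_i\otimes f_i$ of $[\Delta_{\ol k}]$ are Galois-invariant only as a sum, not individually. So an expression like $\sum_i \Sq(e_i)\,\pr_{1*}(\cdots)$ has no meaning in $\rH^*(X)$. Moreover, any argument run over $\ol k$ can only prove the identity after restriction to $X_{\ol k}$, i.e.\ the geometric Wu formula. It cannot see the kernel of $\rH^*(X)\to\rH^*(X_{\ol k})$, and that kernel is where the trace $\int_X$ on $\rH^{2n+1}(X)$ lives, hence where $v$ is defined.

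\textbf{The ``intrinsic'' replacement does not close the gap either.} The integral $\int_{X\times_k X}\pr_1^*x\smile\Sq[\Delta]$ is over $X\times_k X$, whereas the characterization $\int_X\Sq(y)=\int_X v\smile y$ only controls $\int_X$. Writing $x=\Sq(y)$ and using Cartan turns it into $\int_{X\times_k X}\Sq(\pr_1^*y\smile[\Delta])$. Evaluating this requires the Wu class of $X\times_k X$ for its own $(4n+1)$-dimensional duality. If you knew $v(X\times_k X)=\pr_1^*v\smile\pr_2^*v$, you would get $\int_X v\smile v\smile y=\int_X\Sq(v\smile y)=\int_X\Sq(v)\smile x$ and be done. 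But in topology this multiplicativity comes from K\"unneth, which fails over $k$. A short adjunction computation shows that over $k$ it is \emph{equivalent} to the $\Sq$-equivariance of $\varphi_*\colon\rH^*(X\times_k X)\to\rH^*(X)\otimes\rH^*(X)$, i.e.\ to Proposition \ref{prop:varphi-equivariant}. That proposition is the heart of the proof, and your proposal never establishes it.

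\textbf{The obstacles you single out are not the real ones.} Compatibility of $\Sq$ with the purity isomorphism is not where the difficulty lies. ``Compatibility of $\Sq$ with the trace $\rH^{2n+1}(X)\to\rH^1(\Spec k)$'' cannot mean commutation, since that would force $v=1$. Its correct form is a Riemann--Roch formula $\Sq(\pi_*z)=\pi_*(\Sq(z)\smile w^{-1})$ for $\pi\colon X\to\Spec k$. Since $\Sq$ is the identity on $\rH^1(\Spec k)$, the degree-one part of that formula is equivalent to $\Sq(v)=w$, so it restates the theorem rather than proving it.

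Likewise, the \'etale homotopy input you propose (identifying the Thom space of the diagonal) only serves Step 1. In the paper, \'etale homotopy theory is used for a different purpose: to make precise the picture in which $\varphi_*$ is the pushforward along ``$M\times_{S^1}M\to M\times M$''. Its normal bundle is pulled back from that of the diagonal $S^1\subset S^1\times S^1$, which is trivial, and this is what forces $\varphi_*$ to commute with $\Sq$. Your observation that $\Spec k$ has trivial Wu class is the right intuition. The missing idea is transporting it to the comparison between $X\times_k X$ and the (non-existent) product ``$X\times X$'', which is exactly what that argument does.
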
 

The fact that Theorem \ref{thm: etale Wu} looks like a translation of Theorem \ref{thm:wu} belies difficulties in its proof, which is \emph{not} a simple translation of the proof of Theorem \ref{thm:wu}. This is due to the failure of arithmetic cohomology to behave like singular cohomology in some important aspects. The analogous statement for \emph{geometric cohomology} can be proved by translating the topological argument line-by-line. In fact, various generalizations of this analogous ``geometric Wu formula'' have been proved in \cite{Urabe96}, \cite{SS24}, and \cite{Ben24}. We proceed to explain the key complication of the arithmetic situation.

Let us try to follow the proof of the topological Wu formula in Theorem \ref{thm:wu}. Take all cohomology with coefficients in $\Z/2$, and omit the coefficients from the notation. We already see a problem at the step of invoking the K\"unneth formula: there is no K\"unneth formula identifying $\rH^*(X \times_{k} X)$ with $\rH^*(X) \otimes_{\F_2} \rH^*(X)$. (There would have been a K\"unneth formula if $k$ were separably closed.) Indeed, if $X$ has dimension $2d$ then $X \times_{k} X$ has dimension $4d$, so $\rH^*(X \times_{k} X)$ has a Poincar\'e duality structure of dimension $8d+1$, while $\rH^*(X) \otimes_{\F_2} \rH^*(X)$ has a Poincar\'e duality structure of dimension $8d+2$. This latter object does not resemble the cohomology of any natural variety over $k$; it is just an algebraic object that one can formally write down.

There is still a map $\varphi_* \co \rH^*(X \times_{k} X) \rightarrow \rH^*(X) \otimes_{\F_2} \rH^*(X)$, which can be described as follows. Dualizing the cup product
\[
\rH^*(X) \otimes_{\F_2} \rH^*(X) \rightarrow \rH^*(X \times_{k} X)
\]
and then applying Poincar\'{e} duality, we obtain a commutative diagram 
\begin{equation}\label{diag: intro varphi}
\begin{tikzcd}
\rH^*(X)^\vee \otimes_{\F_2} \rH^*(X)^\vee \ar[d, "\wr"]  & \ar[l] \rH^*(X \times_{k} X)^\vee \ar[d, "\wr"]  \\
\rH^*(X) \otimes_{\F_2} \rH^*(X) & \ar[l, "\varphi_*"] \rH^*(X \times_{k} X)
\end{tikzcd}
\end{equation}
Here $(-)^\vee$ denotes linear duality over $\F_2$. 

Another interpretation of $\varphi_*$ is as follows. There is a map $\rH^*(X \times_{k} X)  \rightarrow \End(\rH^*(X))$ sending a cohomology class $\kappa \in \rH^*(X \times_{k} X) $ to the ``kernel'' endomorphism 
\[
u \mapsto \pr_{2*}(\pr_1^*(u) \smile \kappa).
\]
It is clear from the definition that this map still carries the cycle class $[\Delta] \in \rH^*(X \times_{k} X)$ of the diagonal copy of $X$ to the identity endomorphism. Using Poincar\'e duality to identify $\End(\rH^*(X)) \cong \rH^*(X) \otimes \rH^*(X)$, we recover $\varphi_*$ as defined as above. In particular, it is clear that $\varphi_*([\Delta]) = \sum_i e_i \otimes f_i$. To run the rest of the proof, however, we need furthermore that 
\[
\varphi_* (\Sq ([\Delta]))  = \sum_i \Sq(e_i) \otimes \Sq(f_i) = \Sq (\varphi_* ([\Delta])).
\]
This turns out to be quite difficult, and we formulate it as a separate result. 

\begin{prop}[{\cite[Proposition 6.14]{Feng20}}]\label{prop:varphi-equivariant}
The map $\varphi_*$ is equivariant with respect to the Steenrod operations.
\end{prop}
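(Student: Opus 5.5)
The plan is to show that $\varphi_*$ is a Gysin (pushforward) map along a codimension-one ``embedding'' with trivial normal bundle. Steenrod operations commute with such maps, since the correction term is a Stiefel--Whitney class of the normal bundle and here that class is $1$. The topological model is the fibration picture \eqref{diag:fibration-analogy}. If $M \to S^1$ is a fibration, then $M \times_{S^1} M$ is the pullback of $M \times M \to S^1 \times S^1$ along the diagonal $\Delta_{S^1} \colon S^1 \hookrightarrow S^1 \times S^1$. Hence $\iota \colon M\times_{S^1} M \hookrightarrow M \times M$ is a codimension-one embedding whose normal bundle is pulled back from that of $\Delta_{S^1}$, which is trivial. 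Also $\rH^*(M\times M) \cong \rH^*(M)\otimes \rH^*(M)$ by the honest K\"unneth formula. The first step is to check that $\varphi_*$ as defined in \eqref{diag: intro varphi} is exactly $\iota_*$. This holds because $\iota_*$ is Poincar\'e-dual to $\iota^*$, and $\iota^*$ on $\rH^*(M)\otimes\rH^*(M)$ is the external cup product followed by restriction, which is the map being dualized in \eqref{diag: intro varphi}.

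To transport this to the arithmetic setting, I would pass to \'etale homotopy types, as \cite{Feng20} does for constructing the operations. After $2$-completion, the \'etale homotopy type of $\Spec k$ is $B\wh{\Z}$, whose mod $2$ cohomology is that of $S^1$. The \'etale homotopy type of $X$ is then a fibration over $B\wh\Z$ with fiber the homotopy type of $X_{\ol k}$. Next I would argue that the homotopy type of $X\times_k X$ is the homotopy pullback of $\mathrm{Et}(X)\times \mathrm{Et}(X) \to B\wh\Z \times B\wh\Z$ along the diagonal. This can be checked on fibers via the geometric K\"unneth formula for $X_{\ol k}\times X_{\ol k}$, which holds since $2 \neq p$, combined with the Hochschild--Serre comparison. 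The product $\mathrm{Et}(X)\times\mathrm{Et}(X)$ has mod $2$ cohomology $\rH^*(X)\otimes \rH^*(X)$, which is a Poincar\'e duality algebra of dimension $8d+2$. This gives a space-level incarnation of the formal object in the text, on which the $\EE_\infty$ Steenrod operations are the topological ones.

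Then I would construct $\iota_*$ as a Thom--Pontryagin pushforward for the diagonal of $B\wh\Z$. Concretely, take the Thom class $\tau \in \rH^1_{\Delta}(B\wh\Z\times B\wh\Z)$ of the diagonal, pull it back to the total space, and cup with it. Since $\Sq^i$ is natural and satisfies the Cartan formula, equivariance reduces to $\Sq(\tau)=\tau$. This holds because $\Sq^0\tau = \tau$, and $\Sq^1\tau=\tau^2$ lies in $\rH^2_\Delta \cong \rH^1(B\wh\Z) \cdot \tau$ and equals $w_1\cdot \tau$ for a normal class $w_1$. That $w_1$ vanishes because $B\wh\Z\times B\wh\Z$ is a group and the diagonal is ``orientable'' (at the level of the trivial $S^1$-model, $\tau = \pr_1^* t + \pr_2^* t$ with $t^2 = 0$). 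Finally, I would check that this $\iota_*$ agrees with $\varphi_*$ by dualizing as in the first step, using that Poincar\'e duality for $X$ of dimension $4d+1$ is the fiberwise duality of $X_{\ol k}$ combined with duality for $B\wh\Z$.

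I expect the main obstacle to be the comparison step: identifying $X\times_k X$ with the homotopy pullback, and identifying the purely algebraic duality-defined $\varphi_*$ with the homotopy-theoretic Gysin map, compatibly with Poincar\'e duality and the $\EE_\infty$ structures. The difficulty is that the $\EE_\infty$ Steenrod operations on \'etale cohomology must be matched with the topological ones on the étale homotopy type. If this proves too unwieldy, my fallback is to work entirely on cochains: express $\varphi_*$ as the composite of $\rH^*(X\times_kX) \to \rH^*(X\times X\text{ over } k\otimes k)$ with the boundary for $\Spec k \to \Spec(k\otimes_{\F_2} k)$ at the level of $\EE_\infty$-algebras over $C^*(B\wh\Z;\F_2)$. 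I would then verify that this boundary map is a map of $\EE_\infty$-modules up to a shift by a class with trivial Steenrod action.
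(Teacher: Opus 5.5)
Your proposal is correct and follows essentially the same route the paper indicates for \cite{Feng20}. You pass to \'etale homotopy types so that $\rH^*(X)\otimes\rH^*(X)$ is realized by $\mathrm{Et}(X)\times\mathrm{Et}(X)$, identify $\varphi_*$ with the Gysin map along the pullback of the diagonal of the ``circle'' $B\widehat{\Z}$, and conclude equivariance because that normal bundle is trivial. One point needs to be made precise: take the Thom class of the diagonal in the honest model obtained by pulling the fibration back along $S^1 = B\Z \to B\widehat{\Z}$ (a mod $2$ cohomology equivalence), since the pro-space $B\widehat{\Z}\times B\widehat{\Z}$ has no complement of the diagonal with which to form supported cohomology.
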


We will give a toy metaphor for Proposition \ref{prop:varphi-equivariant}. Heuristically, $\varphi_*$ behaves like a pushforward map\footnote{Also sometimes called ``Umkehr map'' or ``wrong-way map''.} on cohomology, hence the notation (even though there is no actual map $\varphi$ which induces it). As discussed above, $\Spec k$ is topologically analogous to $S^1$, so that $X$ is topologically analogous to a manifold $M$ fibered over $S^1$. Then $X \times_{k} X$ is analogous to $M \times_{S^1} M$ and $\rH^*(X) \otimes_{\F_2} \rH^*(X)$ is analogous to the cohomology of $M \times M$. In these terms, $\varphi_*$ would be analogous to the pushforward map on cohomology associated to $M \times_{S^1} M \rightarrow M \times M$. Typically, pushforward maps are \emph{not} equivariant with respect to Steenrod operations; the failure of equivariance should be measured by the Stiefel--Whitney classes of the relative normal bundle. But in this particular situation, because $S^1$ has trivial tangent bundle, one would expect equivariance. 

Of course, this discussion is all heuristic: in our actual situation, there is not even a geometric object whose cohomology realizes $\rH^*(X) \otimes_{\F_2} \rH^*(X)$. This exposes a defect in the analogy between varieties over $k$ and topological spaces fibered over $S^1$: in the latter situation one can forget the fibration and consider the bare topological space, while there is no corresponding move for varieties over $k$. Hence any operation performed in the category of varieties over $k$ is really a ``relative'' operation: the product of varieties over $k$ corresponds to the \emph{fibered} product of manifolds over $S^1$, the tangent bundle of a variety over $\F_q$ corresponds to the \emph{relative} tangent bundle over $S^1$, etc. Because of this, there are some steps in the proof of Wu's Theorem that have no analogue in the category of varieties over $k$. 

In \cite{Feng20}, the solution is to use \'etale homotopy theory to suitably approximate the situation by the topological one discussed in the above metaphor. Passing to \'{e}tale homotopy type allows one to disassociate a variety from this fibration, and thus acquire some of the additional flexibility enjoyed by topological spaces. This trick is not available in the $p$-adic case studied in \cite{CF}, so a new and more robust perspective was developed there that should also be applicable to the $\ell$-adic setting, although this has not yet been written down.

\subsection{Relation to Chern classes}\label{ssec:chern-classes} Suppose we grant the arithmetic Wu formula, Theorem \ref{thm: etale Wu}. This allows us to solve for $v_i(X)$ in terms of Stiefel--Whitney classes $w_{\leq i}(TX)$. Then to prove Theorem \ref{thm:wu-lifts}, it suffices to lift the Stiefel--Whitney classes to integral coefficients. 

\begin{thm}\label{thm:sw-lifts}
In the situation of Theorem \ref{thm:wu-lifts}, each $w_{2i}(TX) \in \rH^{2i}(X; \Z/2)$ lifts to $\rH^{2i}(X; \Z_2(i))$.  
\end{thm}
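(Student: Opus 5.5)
The plan is to show that each even Stiefel--Whitney class is the mod $2$ reduction of a Chern class, i.e.\ $w_{2i}(TX) = \ol{c_i(TX)}$ and $w_{2i+1}(TX) = 0$, with $c_i(TX) \in \rH^{2i}(X; \Z_2(i))$ the $\ell$-adic ($\ell = 2 \neq p$) Chern class. This is the arithmetic analogue of the topological fact that for a complex vector bundle $w_{2i} = c_i \bmod 2$. Since $c_i$ is by construction an integral class, this immediately gives the desired lift.

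First, reduce to line bundles by the splitting principle. Let $f \co \mrm{Fl}(E) \to X$ be the full flag bundle of $E = TX$, so that $f^*E$ has a filtration with line bundle graded pieces $L_1, \dots, L_r$. By the projective bundle formula, $f^* \co \rH^*(X; \Z/2) \to \rH^*(\mrm{Fl}(E); \Z/2)$ is injective, and this holds equally for absolute cohomology over $k$, because the projective bundle formula is a statement about the relative structure. Both $w$ and $c$ are natural under pullback. For $w$ this uses that $\Sq$ commutes with pullback on $\rH^*_X(E)$.

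Next, establish the Whitney sum formula for $w$ with respect to short exact sequences. One deforms an extension to the split sum via the $\mathbb{A}^1$-family and uses homotopy invariance of étale cohomology for $\ell \neq p$. Then one checks that the Thom class of $E_1 \oplus E_2$ is the external product of the Thom classes and applies the Cartan formula for the $\EE_\infty$ Steenrod operations. Together these give $w(E) = \prod_j w(L_j)$. The Chern side satisfies $c(E) = \prod_j (1 + c_1(L_j))$ by construction.

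Finally, treat a line bundle $L$ of rank $1$. Its Thom class $[X] \in \rH^2_X(L; \Z/2(1))$ has degree $2$, so $\Sq^2[X] = [X]^2$ and $\Sq^i[X] = 0$ for $i > 2$. It remains to compute $\Sq^1[X]$. Because $[X]$ is the reduction of the integral cycle class in $\rH^2_X(L; \Z_2(1))$, and $\Sq^1$ equals the Bockstein in the $\EE_\infty$ setting, we get $\Sq^1[X] = 0$. The self-intersection formula $[X]^2 = \pi^* c_1(L) \smile [X]$ holds for the cycle class of the zero section, and it gives $w_2(L) = \ol{c_1(L)}$ and $w_1(L) = 0$. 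Multiplying out over the $L_j$ proves $w_{2i}(TX) = \ol{c_i(TX)}$.

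The main obstacle is making the $\EE_\infty$ Steenrod operations on absolute étale cohomology satisfy the two needed properties. The first is the Cartan formula compatible with external products of Thom classes. The second is the identification $\Sq^{\deg}(x) = x^2$ together with $\Sq^1 = \beta$, compatible with supports and twists. I would obtain these from May's general theory of $\EE_\infty$-algebras \cite{May70}, applied to the symmetric monoidal cochain functor with supports. Alternatively, following \cite{Feng20}, one can transport them from topology via étale homotopy types, where the needed axioms are classical.
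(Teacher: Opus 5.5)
Your strategy (splitting principle, a Whitney formula from the Cartan formula on Thom classes, and an explicit computation on the degree-$2$ Thom class of a line bundle via the self-intersection formula) is the one the paper sketches. However, the line bundle step runs into exactly the subtlety the paper warns about, and as written it is false. In the $\EE_\infty$ normalization you adopt, $\Sq^1$ is the Bockstein of the \emph{untwisted} sequence $0 \to \Z/2 \to \Z/4 \to \Z/2 \to 0$, after identifying $\Z/2(1)=\Z/2$. The fact that $[X]$ lifts to $\rH^2_X(L;\Z_2(1))$ only shows that $[X]$ is killed by the Bockstein of the \emph{twisted} sequence $0 \to \mu_2 \to \mu_4 \to \mu_2 \to 0$.

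These two extensions differ unless $\mu_4 \subset \F_q$. The two Bocksteins differ by cup product with the class $\kappa \in \rH^1(\Spec \F_q;\Z/2)$ of the character through which Galois acts on $\mu_4$, i.e.\ the Kummer class of $-1$. Hence $\Sq^1([X]) = \kappa \smile [X]$ and $w(L) = 1 + \kappa + \ol{c_1(L)}$. In particular $w_1(L) \neq 0$ when $q \equiv 3 \pmod 4$, even for the trivial line bundle. Indeed, by purity $\rH^2_0(\A^1_{\F_q};\Z/4) \cong \rH^0(\Spec \F_q;\Z/4(-1))$, and in that case its image in $\rH^0(\Spec \F_q;\Z/2(-1)) = \Z/2$ is zero, so the Thom class does not lift to untwisted $\Z/4$. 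Consequently your claims $w(L) = 1+\ol{c_1(L)}$ and $w_{2i+1}(TX)=0$ fail. For example, for $X = \mathbb{P}^2_{\F_q}$ with hyperplane class $h$ one gets $w_3(TX) = \kappa \smile h \neq 0$. What you computed is the answer in the \emph{motivic} normalization of $\Sq^1$. The paper contrasts this with the $\EE_\infty$ normalization, for which it points to a more complicated formula \cite[Theorem 5.10]{Feng20} involving the Kummer class of $-1$.

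The statement you are asked to prove nevertheless survives, with a short repair. Multiply out $\prod_j \bigl(1+\kappa+\ol{c_1(L_j)}\bigr)$ over the line bundles $L_1,\dots,L_r$ from the splitting principle. Since $\kappa \smile \kappa$ is pulled back from $\rH^2(\Spec \F_q;\Z/2)=0$, you get
\[
w(E) = \sum_{k} \bigl(1+(r-k)\kappa\bigr)\smile \ol{c_k(E)}, \qquad \text{so} \qquad w_{2i}(E) = \ol{c_i(E)}, \quad w_{2i+1}(E) = (r-i)\,\kappa\smile\ol{c_i(E)}.
\]
Thus $c_i(TX) \in \rH^{2i}(X;\Z_2(i))$ is still a lift of $w_{2i}(TX)$. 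The argument now genuinely uses that $k$ is finite, i.e.\ has cohomological dimension $1$. Over a field where $\kappa\smile\kappa \neq 0$, such as $\R$, the even classes would pick up correction terms involving $\kappa^{2j}\smile\ol{c_{i-j}(E)}$. You should also keep the nonzero odd classes in mind. They enter $v_{2d}$ through $\Sq(v)=w$ in the next step toward Theorem \ref{thm:wu-lifts}.
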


\begin{remark}The inspiration for this lifting comes from the following topological metaphor: for a complex manifold $M$, the Stiefel-Whitney classes are the reductions mod 2 of the Chern classes, and in particular admit integral lifts. Note however that the correct topological analogue of our $X$ is a manifold of dimension $4d+1$, which obviously cannot admit a complex structure, so this principle seems irrelevant at first. 

The resolution is the more refined analogy \eqref{diag:fibration-analogy}, where we pointed out that $X$ is more specifically analogous to a complex manifold fibration $M$ fibered over $S^1$. Since $TS^1$ is trivial, the characteristic classes of $TM$ ``come from'' the complex fiber, and hence behave like those of a complex manifold. 
\end{remark}

Heuristics aside, what actually goes into the proof of Theorem \ref{thm:sw-lifts}? The basic difficulty is that our definition of Stiefel--Whitney classes is difficult to compute explicitly, because it is formulated in terms of Steenrod operations, which are generally difficult to compute explicitly. Our goal is to relate the Stiefel--Whitney classes to reductions of Chern classes, which are relatively concrete. Therefore, our task is effectively to make a computation of Steenrod operations.

What makes this possible is that the formalism of characteristic classes allows one to reduce such identities to the elemental case of line bundles (a manifestation of the ``splitting principle''). Since line bundles have rank one, the cohomology class of the zero section, call it $[X]$, has cohomological degree $2$. Because of its low cohomological degree, we \emph{are} able to compute the Steenrod operations on $[X]$ explicitly. We automatically have $\Sq^i([X])=0$ whenever $i>2$ for degree reasons, $\Sq^0([X]) = [X]$, and $\Sq^2([X]) = [X] \smile [X]$. We also have a concrete description of $\Sq^1$, although this turns out to be slightly subtle because it depends on whether one uses the $\EE_\infty$ or the ``motivic'' incarnation of the Steenrod operations. The paper \cite{Feng20} uses the $\EE_\infty$ operations, for which $\Sq^1$ is the Bockstein operator for the short exact sequence
\[
0 \rightarrow \Z/2 \rightarrow \Z/4 \rightarrow \Z/2 \rightarrow 0.
\]
On the other hand, the motivic incarnation of $\Sq^1$ depends on the weight, and on $\rH^a(X; \Z/2(b))$ it would be
\[
0 \rightarrow \Z/2 (b) \rightarrow \Z/4(b) \rightarrow \Z/2(b) \rightarrow 0.
\]
In either case, $\Sq^1([X])$ is easily calculated by using that $[X]$ lifts to $\rH^2(X; \Z_2(1))$. 

In particular, a canonical lift of $w_{2d}(TX)$ to $\rH^{2d}(X; \Z_2(d))$ can be written down, and it depends on the normalization of $\Sq^1$ as discussed above. Under the motivic normalization, one can show that for any vector bundle $w_{2i+1}(E) = 0$ for all $i  \geq 0$, and $w_{2i} (E) = c_i(E) \pmod{2}$. Under the $\EE_\infty$ normalization taken in \cite{Feng20}, there is a more complicated formula \cite[Theorem 5.10]{Feng20} involving also the Kummer class of $-1$ in $\rH^1(\Spec \F_q; \Z/2(1))$, which can also be viewed as the Bockstein of $-1 \in \rH^0(\Spec \F_q; \mu_2)$. 

\section{Tate's Symplecticity Conjecture for $\ell = p$}\label{sec:ellequalsp}

We finally arrive at the subject of \cite{CF}, which proves Theorem \ref{thm:symplecticityatp}, establishing Conjecture \ref{conj:symplecticity} in the most difficult case, where $p=2$. This difficulty stems from general difficulties of the cohomology in the $\ell = p$ situation. Although the discussion has already become purely impressionistic starting with the previous section, the level of impressionism will increase yet more in this discussion. The metaphor used in the talk, on which this article is based, was that if the previous section was at the resolution of the view from a train window, then this section will be at the resolution of the view from an airplane window.

\subsection{Summary of new obstacles}
Roughly speaking, the hope of \cite{CF} is to execute the strategy of \cite{Feng20} in the $\ell=p$ ($=2$) setting. Let us summarize the new issues which arise when trying to do this. 

The first problem has to do with the Steenrod operations themselves. Although one still has the $\EE_\infty$ Steenrod operations on $\rH^*(X; \Z/2\Z(*))$, which connect to the Milne--Artin--Tate pairing via Theorem \ref{thm: MAT form}, these are no longer the ``right'' operations to use for computing with characteristic classes. This is immediately seen from consideration of weights. When $p \neq 2$, we could identify $\Z/2 = \Z/2(1) = \Z/2(2) = \ldots$ so the weights of cohomology were effectively negligible when working modulo $2$. This is no longer the case for syntomic cohomology, so we have to carefully track weights. Let us start writing $\Sqe^i$ for the $\EE_\infty$ Steenrod operations for emphasis; then each $\Sqe^{2i}$ doubles the weight (\S \ref{ssec:E-infty-steenrod}). This implies for example that $\Sqe^0$ cannot be the identity map, since that would preserve the weight. 

If we tried to use $\Sqe^i$ to define a theory of syntomic Stiefel--Whitney classes as before, then we would find that the resulting $w_i^{\EE}(E) \in \rH^i(X; \Z/2(\rank E))$ lies in a cohomology group whose twist parameter is a function of $\rank(E)$ rather than $i$. This does not match our expectations for a theory of characteristic classes, and is technically incompatible with (say) Chern classes and related ``motivic'' constructions. 

Also, any corresponding ``arithmetic Wu formula'' of the form $w^{\EE} = \Sqe(v)$ could not be inverted to solve for $v$ in terms of $w^{\EE}$, since the $\Sqe$ operation is nilpotent on any finite type $X$, as it doubles the weights.

It turns out that on syntomic cohomology there are two \emph{distinct} incarnations of Steenrod operations. One incarnation comes from an $\EE_\infty$-structure; in this section we denote such operations by $\Sqe^i$. The other incarnation we call the ``syntomic Steenrod operations'' $\Sqs^i$. These operations are of a more motivic nature, and were anticipated by Voevodsky; we construct them in \cite{CF}. We emphasize that we develop the theory of the syntomic Steenrod algebra for all primes $p$, although only the case $p=2$ is needed for Theorem \ref{thm:symplecticityatp}. We discuss the syntomic Steenrod algebra more in \S \ref{ssec:syntomic-steenrod-algebra} below.

After the construction of the syntomic Steenrod algebra, the next major hurdle is the proof of an arithmetic Wu formula. As in the $\ell \neq p$ case, the technical crux is to show that the analogously defined 
\[
\varphi_* \co \rH^*(X \times_{k} X) \rightarrow  \rH^*(X) \otimes_{\F_p} \rH^*(X) 
\]
is equivariant for the syntomic Steenrod operations. The proof requires us to develop a theory of ``spectral prismatic $F$-gauges'', in accordance with Lurie's vision of a ``prismatic stable homotopy theory''. For this task, we received crucial guidance from Lurie.

\subsection{The syntomic Steenrod algebra}\label{ssec:syntomic-steenrod-algebra}  
Geisser--Levine \cite{GL00} proved that for smooth varieties over finite fields, syntomic cohomology coincides with $p$-adic \'etale-motivic cohomology. This contextualizes the issue of constructing syntomic Steenrod operations in terms of a classic problem raised by Voevodsky in his manuscript \cite{Voe02} on ``Open problems in motivic homotopy theory'', which is to \emph{develop a theory of motivic Steenrod operations at the defining characteristic.} 

We recall some historical context for this problem. Away from defining characteristic, the mod $p$ motivic Steenrod algebra for varieties in characteristic $0$ was studied by Voevodsky \cite{Voe03, Voe10}, and for varieties in positive characteristic $\ell \neq p$ by Hoyois--Kelly--Ostvaer \cite{HKO}. Thanks to their work, the mod $p$ motivic Steenrod algebra is now well-understood away from characteristic $p$, but in characteristic $p$ it is still highly mysterious. Voevodsky conjectured a description of it in \cite{Voe02}, which implies that it should have a Milnor basis of power operations over the motivic cohomology of the base field. This conjecture remains wide open, but partial evidence was given by Frankland--Spitzweck \cite{FS18} who showed that Voevodsky's ``expected answer'' for the \emph{dual} motivic Steenrod algebra does at least appear as a (module-theoretic) summand of the true answer. This allows one to define power operations on motivic cohomology, as was suggested already in \cite{FS18} and carried out by Primozic in \cite{Pri20}, but from this definition it seems infeasible to control key technical properties of the operations, such as the Cartan formula. 

In \cite{CF}, the author and Shachar Carmeli took a completely different route, suggested by Lurie, for constructing syntomic Steenrod operations. Concurrently and independently, Annala--Elmanto \cite{AE} used similar ideas to give a new construction of motivic Steenrod operations in defining characteristic. We expect that their operations recover ours upon \'etale sheafification. However, we do not know how to prove the arithmetic Wu formula other than by our method in \cite{CF}, so it is crucial that we use that approach.

\subsection{Perfectoid nearby cycles}
In \cite{Voe03, Voe10} Voevodsky defined and analyzed the motivic Steenrod algebra on mod $p$ motivic cohomology of varieties over characteristic zero fields. The idea for constructing the syntomic Steenrod algebra is to transport Voevodsky's results to characteristic $p$ using a ``perfectoid nearby cycles'' functor \cite[\S 4]{CF} which was suggested to us by Jacob Lurie (and credited by Lurie to Akhil Mathew, who in turn credits Niziol for the inspiration).

We set up some language needed to describe this ``perfectoid nearby cycles'' functor. For a scheme $S$, let $\SH_S$ be Morel--Voevodsky's \emph{$p$-complete motivic stable homotopy category} of $\A^1$-invariant cohomology theories over $S$. Thus $\SH_S$ contains an object $\MHFp$ representing mod $p$ motivic cohomology. Annala--Hoyois--Iwasa \cite{AHI1} constructed the category of $p$-complete \emph{motivic spectra} $\MS_S$, an enlargement of $\SH_S$ which includes non-$\A^1$-invariant cohomology theories. In particular, mod $p$ syntomic cohomology (which is not $\A^1$-invariant) promotes to an object $\MSFp \in \MS_S$. The objects of $\MS_S$ give rise to Nisnevich sheaves of ($p$-complete) \emph{spectra} on smooth schemes over $S$. If $S = \Spec R$, we will also denote $\MS_R:= \MS_S$ and $\SH_R := \SH_S$.

We start by considering the integral perfectoid ring $\Zpcyc = \Z_p[\mu_{p^\infty}]^{\wedge}_p$, obtained by adjoining all $p$-power roots of unity to $\Z_p$ and then $p$-adically completing. Then associated to 
\[
j \co \Spec \Q_p^{\cyc} \inj \Spec \Z_p^{\cyc} \hookleftarrow \Spec \F_p  \co i
\]
we obtain functors
\[
\begin{tikzcd}
\MS_{\Q_p^{\cyc}} \ar[r, "j_*"] & \MS_{\Z_p^{\cyc}} \ar[r, "i^*"] & \MS_{\F_p} 
\end{tikzcd}
\]
The ``perfectoid nearby cycles'' functor $\psi \co \SH_{\Q_p^{\cyc}} \rightarrow \MS_{\F_p}$ is defined as the $i^* L_{\et} j_*$, where $L_{\et}$ is the \'etale sheafification functor. The \emph{key calculation}, which has moral roots in \cite{AMM22} and \cite{Niz98}, is that $\psi$ carries the motivic cohomology spectrum $\MHFp \in \SH_{\Q_p^{\cyc}}$ to the syntomic cohomology spectrum $\MSFp \in \MS_{\F_p}$. This fact ultimately allows us to transmute information about the motivic Steenrod algebra in characteristic $0$, which was explicated by Voevodsky in \cite{Voe03, Voe10}, into information about the syntomic Steenrod algebra in characteristic $p$. Indeed, the motivic Steenrod algebra over $K$ can be defined as 
\[
\cA_{\mot}^{*,*} := \Ext^{*,*}_{\Sph}(\MHFp, \MHFp)
\]
where $\Sph \in \SH_K$ is the symmetric monoidal unit, which is the \emph{$p$-complete motivic sphere spectrum}. Taking inspiration from the key calculation mentioned above, we may regard $\psi(\Sph)$ as ``the syntomic sphere spectrum'' over $k$, and then try to define a syntomic Steenrod algebra as
\[
\Ext^{*,*}_{\psi(\Sph)}(\psi(\MHFp), \psi(\MHFp)).
\]
This is a technically oversimplified but morally faithful approximation to our actual definition of the syntomic Steenrod algebra $\cAsyn^{*,*}$.

\subsection{Spectral prismatization}\label{intro: spectral prismatization} Suppose that the syntomic Steenrod operations have been satisfactorily constructed. The next step is to define syntomic Stiefel--Whitney classes, following the template of \S \ref{ssec:wu-formula}. Then one needs to prove an arithmetic Wu formula, which looks cosmetically the same as Theorem \ref{thm:wu-lifts}. We stress again that the difficulty comes from the ``arithmetic direction''; for a geometric Wu formula one can see \cite[\S 4.4]{AE}. 

For the arithmetic Wu formula, the key technical difficulty is to show compatibility of a map $\varphi_*$, defined analogously as in \eqref{diag: intro varphi}, with respect to the syntomic Steenrod operations. For this, we cannot mimic the trick of \cite{Feng20}, which used \'etale homotopy theory to reduce to a topological situation.

Instead, \cite{CF} develops a new approach to this problem, which involves building an apparatus that is likely of broader importance than the application itself: a generalization of \emph{prismatization} in the sense of Drinfeld and Bhatt--Lurie, for the syntomic sphere spectrum, and an attendant ``spectral Serre duality''. In particular, we construct an approximation to Lurie's conjectural \emph{prismatic stable homotopy category} over $k$, which is an extension of the category of prismatic $F$-gauges over $k$ in the spirit of stable homotopy theory. To be clear, the idea for how to do this was explained to us by Lurie. We emphasize that we develop this story \emph{for general $p$}, although only the case $p=2$ is needed for the proof of Theorem \ref{thm:symplecticityatp}.

Let us indicate how this machinery enters the story. Looking at the formulation of Proposition \ref{prop:varphi-equivariant}, we see that it is fundamentally about the compatibility between two phenomena: Poincar\'e duality, and Steenrod operations. These two phenomena are of very different origin, at least historically. Milne's proof of Poincar\'e duality used the relatively concrete description of his logarithmic de Rham--Witt cohomology groups in terms of differential forms. On the other hand, the syntomic Steenrod operations arose very indirectly, via perfectoid nearby cycles. Even the comparison \cite[Corollary 8.21]{BMS2} between the underlying cohomology theories---Milne's logarithmic de Rham--Witt cohomology theory and Bhatt--Morrow--Scholze's syntomic cohomology theory---is quite complicated. Therefore, in order to analyze the compatibility, it is natural to seek a new perspective on Poincar\'e duality, which is native to the formalism of syntomic cohomology in \cite{BL22a} that we used to construct the syntomic Steenrod algebra. 

Fortunately, such a perspective may be found in \cite{Bha22}, which reinterprets Poincar\'e duality for syntomic cohomology in terms of Serre duality for quasicoherent sheaves on a certain stack $\FSyn{\F_p}$, introduced by Drinfeld \cite{Drin24} and Bhatt--Lurie \cite{Bha22}. This recasts the problem in terms of the compatibility of Serre duality on $\FSyn{\F_p}$ with the syntomic Steenrod operations. 

The category of mod $p$ prismatic $F$-gauges over $k$ is
\[
 \FGauge{\FF_p} := \QCoh(\FSyn{\F_p}).
 \]
The syntomic Steenrod algebra can be interpreted as the ring of derived endomorphisms of the structure sheaf $\cO_{\FSyn{\F_p}}$ within a ``thickened'' category of \emph{spectral prismatic $F$-gauges} $\pFGauge{\mathbb{S}}$, analogously to how the classical Steenrod algebra in topology can be interpreted as the ring of endomorphisms of $\F_p$ in the category of spectra. (Here the notation $\mbb{S}$ has to do with the motivic sphere spectrum.) We construct the category $\pFGauge{\mathbb{S}}$ following guidance of Lurie; it is closely related to his conjectural ``prismatic stable homotopy category'' (over $k$).\footnote{The superscript ``pre'' reflects that it is only an approximation to the latter, which however is an equivalence on all the objects that we consider in this paper (see \cite[Remark 9.2.4]{CF}) for a precise explanation of this statement.}

This allows us to lift the syntomic Steenrod to a quasicoherent sheaf of algebras in $\QCoh(\FSyn{\F_p})$ that we call the ``prismatized syntomic Steenrod algebra''. Then we can ``localize'' the equivariance question onto $\FSyn{\F_p}$. To explain this: there is an adjunction 
\[
\iota^*\colon \pFGauge{\mbb{S}} \adj \FGauge{\FF_p} : \iota_*
\]
which should morally be thought of as coming from an embedding $\iota$ of $\FSyn{\F_p}$ into a ``spectral prismatization stack'' $\FSyn{\sph}$. We then define the internal Hom algebra in $\pFGauge{\mathbb{S}}$, 
\[
\sAsyn := \cRHom_{\pFGauge{\mathbb{S}}}(\iota_* \cO_{\FSyn{\F_p}} , \iota_* \cO_{\FSyn{\F_p}}),
\]
as the ``prismatization of the syntomic Steenrod algebra''; it recovers the syntomic Steenrod algebra $\Asyn^{*,*}$ upon taking global sections. By construction, all objects of $\FGauge{\FF_p}$ which arise via $\iota^*$ from $\FSyn{\sph}$ are equipped with a tautological action of $\sAsyn$.

The situation is formally analogous to that of a $G$-torsor $\pi \co P \rightarrow S$. Since $G$ is the group of endomorphisms of $P$ over $S$, an object over $S$ pulls back to a $G$-equivariant object over $P$. Analogously, an object on $\FSyn{\sph}$ pulls back to an $\sAsyn$-equivariant object on $\FGauge{\FF_p}$. Conversely, in the situation of $\pi \co P \rightarrow S$, to prove that a structure over $P$ is $G$-equivariant amounts to descending it to $S$. Analogously, to prove that the syntomic Steenrod action is compatible with Serre duality on $\FSyn{\F_p}$, the crux is to appropriately extend Serre duality to a ``spectral Serre duality'' on $\pFGauge{\mathbb{S}}$. (See \cite{SHA} for a more abstract discussion which formally puts these two situations in a uniform framework.)

The key to spectral Serre duality is to find the right definition of the dualizing sheaf in $\pFGauge{\mathbb{S}}$. For this we were again aided by a suggestion of Lurie to take inspiration from \emph{Brown--Comenetz duality} \cite{BC76}, which is a generalization of Pontrjagin duality to spectra. To be precise, let $\Sp$ be the category of $p$-complete spectra and $\bI \in \Sp$ be the $p$-completion of the Brown--Comenetz spectrum. We ``pull back'' (in a suitable sense) $\bI$ from $\Sp$ to define a dualizing sheaf in $\pFGauge{\mbb{S}}$, which we show fits into a good theory of ``spectral Serre duality'' on $\pFGauge{\mbb{S}}$, compatible with the classical theory of coherent duality in $\FGauge{\FF_p}$. This interplay can be used to show the desired compatibility of the prismatized Steenrod algebra action with Serre duality, as explained in \cite[\S 11]{CF}.

\subsection{The comparison theorem} Theorem \ref{thm: MAT form} connects the Milne--Artin--Tate pairing to $\EE_\infty$ Steenrod operations. However, it is the \emph{syntomic} Steenrod operations that we can calculate effectively, using the arithmetic Wu formula and the connection between Stiefel--Whitney classes and Chern classes, established similarly as in \S \ref{ssec:chern-classes}. Now there is a disconnect between these two stories, and to bridge it we need to relate the two types of Steenrod operations. They obviously differ in general because they have different effects on the weights, e.g., 
\begin{align}\label{eq: intro E weight change}
\Sqe^{2i}  &\co \rH^a(-; \Z/2(b)) \rightarrow \rH^{a+2i}(-; \Z/2(2b)) \\
\Sqs^{2i} & \co \rH^a(-; \Z/2(b)) \rightarrow \rH^{a+2i}(-; \Z/2(b+i)).
\end{align}
Note, however, that the codomains \emph{sometimes} agree. For example, the crucial instance for Theorem \ref{thm:symplecticityatp} is the case $p=2$, $a=3$, and $b=1$, where $\Sqe^2$ and $\Sqs^2$ ``coincidentally'' both take the form $\rH^{3}(-; \Z/2(1)) \rightarrow \rH^5(-;\Z/2(2))$. We prove the following ``Comparison Theorem'' asserting that the two flavors of operations agree \emph{whenever} they have the same domain and codomain. 

\begin{thm}[{\cite[Theorem 8.1.1]{CF}}]\label{thm: intro compare operations}
If $b = i$, so that the two maps in \eqref{eq: intro E weight change} have the same source and target, then they agree. 
\end{thm}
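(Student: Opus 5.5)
The plan is to compare the two operations universally: reduce to classes on a representing object, check agreement there, and then propagate by naturality. The first step uses the fact that both $\Sqe^{2i}$ and $\Sqs^{2i}$ are natural transformations of functors on smooth $\F_p$-schemes. Since $b=i$, both have the form $\rH^a(-;\Z/p(i)) \to \rH^{a+2i}(-;\Z/p(2i))$. Each is therefore determined by a single universal class: a map from the object representing $\rH^a(-;\Z/p(i))$ to the one representing $\rH^{a+2i}(-;\Z/p(2i))$, in $\MS_{\F_p}$ or on the level of syntomic cochain theories. So it suffices to show that these two universal maps agree.

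To make the comparison concrete, I would use the perfectoid nearby cycles functor $\psi$. Both flavors of operation should arise from structures over $\Q_p^{\cyc}$ transported by $\psi$. The syntomic operations come by construction from Voevodsky's motivic operations on $\MHFp$. The $\EE_\infty$ operations come from the $\EE_\infty$-ring structure on $\bigoplus_b \MSFp(b)$, which is $\psi$ of the $\EE_\infty$-ring $\bigoplus_b \MHFp(b)$, granting that $\psi$ is lax symmetric monoidal. This reduces the theorem to the analogous comparison in characteristic $0$. There the statement is classical in spirit: on motivic cohomology of weight $b$, the motivic square $\Sq^{2b}$ and the "naive" $\EE_\infty$ squaring operation in bidegree $(2b,\,\cdot)$ are both computed from the same extended-power construction on $\MHFp$. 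Voevodsky's construction of $\Sq^{2i}$ uses the motivic classifying space $B_{\mathrm{gm}}\mu_p$ (the total power operation). The comparison then becomes the statement that the $\EE_\infty$ total power operation, restricted along the map from topological $B\Sigma_p$ to its motivic version, picks out the coefficient of $\Sq^{2i}$ exactly when the weight $b$ matches $i$.

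With this reduction in place, I would do a direct computation on the cohomology of $B\mu_p$ in weight matching. Voevodsky's total power $P(x) = \sum_j \Sq^{2j}(x)\,u^{\,?}v^{\,?}$ lives in $\rH^{*,*}(X\times B\mu_p)$. The $\EE_\infty$ operations are obtained by pulling back along the comparison of group cohomology of $\Sigma_p$ (or $C_p$) with $\rH^{*,*}(B\mu_p)$. These two presentations differ only by which generators $u$ (weight $1$) and $v$ (weight $1$, degree $2$) versus the topological generators $e_1,e_2$ of weight $0$ are used. Since over $\Q_p^{\cyc}$ we have $\mu_p$ trivialized, and $\tau$-type classes (the Bott element $\Z/p(0)\cong\Z/p(1)$ étale-locally after adjoining $\mu_p$) identify the weights, the $\EE_\infty$ operation equals the motivic one multiplied by a power of the Bott element $\tau^{b-i}$ (up to a unit). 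When $b=i$ this power is trivial, so the operations coincide.

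I expect the main obstacle to be making this Bott-element bookkeeping legitimate after applying $\psi$. Specifically, one has to check that the identification of the two total power constructions is compatible with étale sheafification and $i^*j_*$, and that no extra unit or sign arises. Before sheafification, motivic cohomology sees the weights as genuinely different, so the clean comparison only holds étale-locally. I would first verify the case $a=2i$, where both operations are the $p$-th power, as a normalization check fixing the unit. I would then use stability under suspension and Cartan formulas to extend to all $a$ with $b=i$.
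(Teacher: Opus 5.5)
Your outer strategy is the paper's: reduce to characteristic $0$ through the perfectoid nearby cycles functor $\psi$. The difference is that the paper imports the characteristic-$0$ comparison from Bachmann--Hopkins, and the Bott-element argument you propose in its place does not work.

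\textbf{The Bott element does not survive $\psi$.} The functor $\psi=i^*L_{\et}j_*$ is applied to Nisnevich motivic cohomology $\MHFp$ over $\Q_p^{\cyc}$; the key calculation is $\psi(\MHFp)\simeq\MSFp$. Under $\psi$, the class $\tau$ goes to $\rH^0(\Spec\F_p;\Z/p(1))=0$. Since $\psi$ is lax symmetric monoidal, it would carry an invertible $\tau$ to a unit. So $\psi$ does not factor through $\tau$-inversion or \'etale sheafification on the generic fiber. An identity that holds only \'etale-locally over $\Q_p^{\cyc}$, or that uses $\tau^{-1}$ to ``identify the weights'', therefore cannot be transported to syntomic cohomology. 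This is not a compatibility to check afterwards: the comparison must be proved before inverting $\tau$, in a setting where $\tau$ cannot be cancelled.

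\textbf{In the non-$\tau$-inverted setting, your computation only covers the range where the statement is empty.} Pulling Voevodsky's total power back along $B\Sigma_2\to B_{\mathrm{gm}}\mu_2$, with $u\mapsto\tau e$ and $v\mapsto\tau e^2$ (since $\rho=0$ over $\Q_2^{\cyc}$), does give $\Sqe^{2i}=\tau^{b-i}\Sq^{2i}$ on $\rH^{2b,b}$. By $S^1$-desuspension it then holds on $\rH^{a,b}$ for $a\le 2b$. But when $b=i$, both operations vanish for $a<2b$ by instability, and both equal the square for $a=2b$. So your normalization check is all this yields.

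The entire content of the theorem lies in $a>2b$, which includes the case $(a,b,i)=(3,1,1)$ needed for Theorem~\ref{thm:symplecticityatp}. Your proposed extensions do not reach it:
\begin{itemize}
\item $S^1$-stability only transfers agreement from degree $a+1$ down to degree $a$, so it cannot propagate upward from $a=2i$.
\item If you instead $\mathbb{G}_m$-suspend $x\in\rH^{a,b}$ into bidegree $(2n,n)$ and use the Cartan formula with $\Sqe^0[t]=\tau[t]$, you only get $\tau^{a-2b}\bigl(\Sqe^{2b}x-\Sq^{2b}x\bigr)=0$. This becomes vacuous after $\psi$, because $\psi(\tau)=0$.
\end{itemize}

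The missing ingredient is a universal comparison of the $\EE_\infty$ and motivic operations in the range $a>2b$. It must hold without inverting $\tau$, and be formulated at the level of the structures that $\psi$ actually transports: the graded $\EE_\infty$-structure on $\bigoplus_b\MHFp(b)$ and the stable operations on $\MHFp$. This is exactly the input the paper takes from Bachmann--Hopkins.
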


In fact, {\cite[Theorem 8.1.1]{CF}} determines the relationship between $\Sqe^i$ and $\Sqs^i$ in all cases. The main content of Theorem \ref{thm: intro compare operations} is an analogous result of Bachmann--Hopkins \cite{BH25} for motivic cohomology, in characteristic 0, which we bootstrap to characteristic $p$ using our perfectoid nearby cycles functor.

This completes our vague and impressionistic tour of the proof of Theorem \ref{thm:symplecticityatp}. Figure \ref{fig:roadmap} gives a bird's eye view of the story.

\bibliographystyle{amsalpha}

\bibliography{Bibliography}

\providecommand{\bysame}{\leavevmode\hbox to3em{\hrulefill}\thinspace}
\providecommand{\MR}{\relax\ifhmode\unskip\space\fi MR }
\providecommand{\MRhref}[2]{%
  \href{http://www.ams.org/mathscinet-getitem?mr=#1}{#2}
}
\providecommand{\href}[2]{#2}
\begin{thebibliography}{HKOsr17}

\bibitem[AE25]{AE}
Toni Annala and Elden Elmanto, \emph{Motivic power operations at the
  characteristic via inﬁnite ramiﬁcation}, 2025.

\bibitem[AHI25]{AHI1}
Toni Annala, Marc Hoyois, and Ryomei Iwasa, \emph{Algebraic cobordism and a
  {C}onner-{F}loyd isomorphism for algebraic {K}-theory}, J. Amer. Math. Soc.
  \textbf{38} (2025), no.~1, 243--289.

\bibitem[AMM22]{AMM22}
Benjamin Antieau, Akhil Mathew, and Matthew Morrow, \emph{The {K}-theory of
  perfectoid rings}, Doc. Math. \textbf{27} (2022), 1923--1952.

\bibitem[BC76]{BC76}
Edgar~H. Brown, Jr. and Michael Comenetz, \emph{Pontrjagin duality for
  generalized homology and cohomology theories}, Amer. J. Math. \textbf{98}
  (1976), no.~1, 1--27.

\bibitem[Ben25]{Ben24}
Olivier Benoist, \emph{Steenrod operations and algebraic classes}, Tunis. J.
  Math. \textbf{7} (2025), no.~1, 53--89.

\bibitem[BH25]{BH25}
Tom Bachmann and Michael Hopkins, \emph{Stable operations in motivic homotopy
  theory}, 2025, Available at \url{https://tom-bachmann.com/ops.pdf}.

\bibitem[Bha22]{Bha22}
Bhargav Bhatt, \emph{Prismatic ${F}$-gauges}, 2022, Lecture notes, available at
  \url{https://www.math.ias.edu/~bhatt/teaching/mat549f22/lectures.pdf}.

\bibitem[BL22]{BL22a}
Bhargav Bhatt and Jacob Lurie, \emph{Absolute prismatic cohomology}, 2022.

\bibitem[BMS19]{BMS2}
Bhargav Bhatt, Matthew Morrow, and Peter Scholze, \emph{Topological
  {H}ochschild homology and integral {$p$}-adic {H}odge theory}, Publ. Math.
  Inst. Hautes \'{E}tudes Sci. \textbf{129} (2019), 199--310.

\bibitem[CF25]{CF}
Shachar Carmeli and Tony Feng, \emph{{Prismatic Steenrod operations and
  arithmetic duality on Brauer groups}}, 2025.

\bibitem[Dri24]{Drin24}
Vladimir Drinfeld, \emph{Prismatization}, 2024.

\bibitem[Fen20a]{Feng20}
Tony Feng, \emph{\'{E}tale {S}teenrod operations and the {A}rtin-{T}ate
  pairing}, Compos. Math. \textbf{156} (2020), no.~7, 1476--1515.

\bibitem[Fen20b]{SHA}
Tony Feng, \emph{The {S}pectral {H}ecke {A}lgebra}, 2020.

\bibitem[FS18]{FS18}
Martin Frankland and Markus Spitzweck, \emph{Towards the dual motivic
  {S}teenrod algebra in positive characteristic}, 2018.

\bibitem[Gei20]{Gei20}
Thomas~H. Geisser, \emph{Comparing the {B}rauer group to the
  {T}ate-{S}hafarevich group}, J. Inst. Math. Jussieu \textbf{19} (2020),
  no.~3, 965--970.

\bibitem[GL00]{GL00}
Thomas Geisser and Marc Levine, \emph{The {$K$}-theory of fields in
  characteristic {$p$}}, Invent. Math. \textbf{139} (2000), no.~3, 459--493.

\bibitem[Gor79]{Go79}
W.~J. Gordon, \emph{Linking the conjectures of {A}rtin-{T}ate and
  {B}irch-{S}winnerton-{D}yer}, Compositio Math. \textbf{38} (1979), no.~2,
  163--199.

\bibitem[HKOsr17]{HKO}
Marc Hoyois, Shane Kelly, and Paul~Arne \O~stv\ae r, \emph{The motivic
  {S}teenrod algebra in positive characteristic}, J. Eur. Math. Soc. (JEMS)
  \textbf{19} (2017), no.~12, 3813--3849.

\bibitem[Ill79]{Ill79}
Luc Illusie, \emph{Complexe de de {R}ham-{W}itt et cohomologie cristalline},
  Ann. Sci. \'{E}cole Norm. Sup. (4) \textbf{12} (1979), no.~4, 501--661.

\bibitem[KT03]{KT}
Kazuya Kato and Fabien Trihan, \emph{On the conjectures of {B}irch and
  {S}winnerton-{D}yer in characteristic {$p>0$}}, Invent. Math. \textbf{153}
  (2003), no.~3, 537--592.

\bibitem[LLR05]{LLR05}
Qing Liu, Dino Lorenzini, and Michel Raynaud, \emph{On the {B}rauer group of a
  surface}, Invent. Math. \textbf{159} (2005), no.~3, 673--676.

\bibitem[LLR18]{LLR18}
\bysame, \emph{Corrigendum to {N}\'{e}ron models, {L}ie algebras, and reduction
  of curves of genus one and {T}he {B}rauer group of a surface [ {MR}2092767;
  {MR}2125738]}, Invent. Math. \textbf{214} (2018), no.~1, 593--604.

\bibitem[Man67]{Manin67}
Ju.~I. Manin, \emph{Rational surfaces over perfect fields. {II}}, Mat. Sb.
  (N.S.) \textbf{72 (114)} (1967), 161--192.

\bibitem[Man86]{Manin86}
Yu.~I. Manin, \emph{Cubic forms}, second ed., North-Holland Mathematical
  Library, vol.~4, North-Holland Publishing Co., Amsterdam, 1986, Algebra,
  geometry, arithmetic, Translated from the Russian by M. Hazewinkel.

\bibitem[May70]{May70}
J.~Peter May, \emph{A general algebraic approach to {S}teenrod operations}, The
  {S}teenrod {A}lgebra and its {A}pplications ({P}roc. {C}onf. to {C}elebrate
  {N}. {E}. {S}teenrod's {S}ixtieth {B}irthday, {B}attelle {M}emorial {I}nst.,
  {C}olumbus, {O}hio, 1970), Lecture Notes in Mathematics, Vol. 168, Springer,
  Berlin, 1970, pp.~153--231.

\bibitem[Mil75]{Milne75}
J.~S. Milne, \emph{On a conjecture of {A}rtin and {T}ate}, Ann. of Math. (2)
  \textbf{102} (1975), no.~3, 517--533.

\bibitem[Mil86]{Milne86}
\bysame, \emph{Values of zeta functions of varieties over finite fields}, Amer.
  J. Math. \textbf{108} (1986), no.~2, 297--360.

\bibitem[Mil97]{Mil97}
John~W. Milnor, \emph{Topology from the differentiable viewpoint}, Princeton
  Landmarks in Mathematics, Princeton University Press, Princeton, NJ, 1997,
  Based on notes by David W. Weaver, Revised reprint of the 1965 original.

\bibitem[Mil25]{milne2025arithmeticduality}
James~S. Milne, \emph{Arithmetic duality}, 2025.

\bibitem[Mor24]{Mor24}
Masanori Morishita, \emph{Knots and primes---an introduction to arithmetic
  topology}, Universitext, Springer, Singapore, [2024] \copyright 2024, Second
  edition [of 2905431].

\bibitem[MS74]{MS74}
John~W. Milnor and James~D. Stasheff, \emph{Characteristic classes}, Princeton
  University Press, Princeton, N. J.; University of Tokyo Press, Tokyo, 1974,
  Annals of Mathematics Studies, No. 76.

\bibitem[MT68]{MoTan68}
Robert~E. Mosher and Martin~C. Tangora, \emph{Cohomology operations and
  applications in homotopy theory}, Harper \& Row, Publishers, New York-London,
  1968.

\bibitem[Niz98]{Niz98}
Wies\l~awa Nizio\l, \emph{Crystalline conjecture via {$K$}-theory}, Ann. Sci.
  \'{E}cole Norm. Sup. (4) \textbf{31} (1998), no.~5, 659--681.

\bibitem[Pri20]{Pri20}
Eric Primozic, \emph{Motivic {S}teenrod operations in characteristic {$p$}},
  Forum Math. Sigma \textbf{8} (2020), Paper No. e52, 25.

\bibitem[PS99]{PS99}
Bjorn Poonen and Michael Stoll, \emph{The {C}assels-{T}ate pairing on polarized
  abelian varieties}, Ann. of Math. (2) \textbf{150} (1999), no.~3, 1109--1149.

\bibitem[SS24]{SS24}
Federico Scavia and Fumiaki Suzuki, \emph{Two coniveau filtrations and
  algebraic equivalence over finite fields}, 2024.

\bibitem[Ste47]{Ste47}
N.~E. Steenrod, \emph{Products of cocycles and extensions of mappings}, Ann. of
  Math. (2) \textbf{48} (1947), 290--320.

\bibitem[Tat63]{Tate62}
John Tate, \emph{Duality theorems in {G}alois cohomology over number fields},
  Proc. {I}nternat. {C}ongr. {M}athematicians ({S}tockholm, 1962), Inst.
  Mittag-Leffler, Djursholm, 1963, pp.~288--295.

\bibitem[Tat95]{Tate66}
\bysame, \emph{On the conjectures of {B}irch and {S}winnerton-{D}yer and a
  geometric analog}, S\'eminaire {B}ourbaki, {V}ol.\ 9, Soc. Math. France,
  Paris, 1995, pp.~Exp.\ No.\ 306, 415--440.

\bibitem[Tho52]{Thom51}
Ren{\'e} Thom, \emph{Espaces fibr\'es en sph\`eres et carr\'es de {S}teenrod},
  Ann. Sci. Ecole Norm. Sup. (3) \textbf{69} (1952), 109--182.

\bibitem[Ura96]{Urabe96}
Tohsuke Urabe, \emph{The bilinear form of the {B}rauer group of a surface},
  Invent. Math. \textbf{125} (1996), no.~3, 557--585.

\bibitem[Voe02]{Voe02}
Vladimir Voevodsky, \emph{Open problems in the motivic stable homotopy theory.
  {I}}, Motives, polylogarithms and {H}odge theory, {P}art {I} ({I}rvine, {CA},
  1998), Int. Press Lect. Ser., vol.~3, Int. Press, Somerville, MA, 2002,
  pp.~3--34.

\bibitem[Voe03]{Voe03}
\bysame, \emph{Reduced power operations in motivic cohomology}, Publ. Math.
  Inst. Hautes \'{E}tudes Sci. (2003), no.~98, 1--57.

\bibitem[Voe10]{Voe10}
\bysame, \emph{Motivic {E}ilenberg-{M}ac{L}ane spaces}, Publ. Math. Inst.
  Hautes \'{E}tudes Sci. (2010), no.~112, 1--99.

\bibitem[Wal62]{Wall62}
C.~T.~C. Wall, \emph{Killing the middle homotopy groups of odd dimensional
  manifolds}, Trans. Amer. Math. Soc. \textbf{103} (1962), 421--433.

\bibitem[Wu50]{Wu50}
Wen-ts\"{u}n Wu, \emph{Classes caract\'{e}ristiques et {$i$}-carr\'{e}s d'une
  vari\'{e}t\'{e}}, C. R. Acad. Sci. Paris \textbf{230} (1950), 508--511.

\bibitem[Zar89]{Zar}
Yu.~G Zarhin, \emph{The {B}rauer group of a surface over a finite field
  ({R}ussian)}, Arithmetic and Geometry of Varieties (1989), 57--67.

\end{thebibliography}

\end{document}